\numberwithin{equation}{section} \theoremstyle{plain}
\theoremstyle{plain}
\newtheorem{thm}[subsection]{Theorem}
\newtheorem{cor}[subsection]{Corollary}
\newtheorem{lem}[subsection]{Lemma}
\newtheorem{prop}[subsection]{Proposition}
\theoremstyle{definition}
\newtheorem{Def}[subsection]{Definition}
\theoremstyle{remark}
\newtheorem{rem}[subsection]{Remark}
\newtheorem{exs}[subsection]{Example}
\newcommand{\loc}{\operatorname{loc}}
\newcommand{\Dom}{\operatorname{Dom}}
\newcommand{\lloc}{L_{\loc}}
\newcommand{\End}{\operatorname{End}}
\newcommand\CC{\mathbb{C}}
\newcommand\RR{\mathbb{R}}
\newcommand\NN{\mathbb{N}}
\author{Ognjen Milatovic}
\address{Department of Mathematics
and Statistics
\\ University of North Florida
\\ Jacksonville, FL
32224 \\ USA}
\email{omilatov@unf.edu}
\author{Hemanth Saratchandran}
\address{Institut f{\"u}r Mathematik \\
Differentialgeometrie \\
Universit{\"a}t Augsburg \\
Universit{\"a}tsstra{\ss}e 14 \\
86159 Augsburg \\
Germany}
\email{hemanth.saratchandran@gmail.com}
\title{Essential self-adjointness of perturbed biharmonic operators via
conformally transformed metrics}
\keywords{biharmonic operator, Bochner Laplacian, conformally transformed metric,
essential self-adjointness, perturbation,
Riemannian manifold}
\subjclass[2010]{35P05, 58J05, 58J50}
\begin{document}
\maketitle
\begin{abstract}
We give sufficient conditions for the essential self-adjointness
of perturbed biharmonic operators acting on sections of a Hermitian
vector bundle over a Riemannian manifold with additional assumptions, such as lower semi-bounded Ricci curvature or bounded sectional curvature. In the case of lower semi-bounded Ricci curvature, we formulate our results in terms of the completeness of the metric that is conformal to the original one, via a conformal factor that depends on a minorant
of the perturbing potential $V$. In the bounded sectional curvature situation, we are able to relax the growth condition on the minorant of $V$ imposed in an earlier article. In this context, our growth condition on the minorant of $V$ is consistent with the literature on the self-adjointness of perturbed biharmonic operators on $\mathbb{R}^n$.
\end{abstract}

\section{Introduction}

The topic of essential self-adjointness of
Schr\"odinger operators on $\RR^n$, along with various applications in mathematical physics, has been studied thoroughly over the past hundred years. The accounts of some of the most important developments are given in the books~\cite{cycon,kato,reed} and in the paper~\cite{Simon-18}. The starting point for the exploration of this theme in the context of Riemannian manifolds is the article \cite{gaffney}, where the essential self-adjointness of the scalar Laplacian and Hodge Laplacian on a
complete Riemannian manifold was established. After about two decades, this result was generalised by the author of \cite{cordes}, who proved the essential self-adjointness of (positive integer) powers of the scalar Laplacian. Around the same time, the author of~\cite{chernoff} used hyperbolic equation techniques to prove the essential self-adjointness of (positive integer) powers of first-order operators, thus incorporating the self-adjointness of powers of the Hodge Laplacian.

The 1990s opened up avenues for the exploration of the self-adjointness problem for Schr{\"o}dinger operators on
Riemannian manifolds (including those acting on sections of a Hermitian vector bundle). This (ongoing) investigation has resulted in quite a few articles, of which we mention~\cite{braverman_1,br-c, braverman_2,grummt,guneysu-17-paper,GP,lm,oleinik,Oleinik94,shubin_1,shubin_2} here and refer the reader to the book~\cite{guneysu-17} for additional references.

Before proceeding further, let us recall (see~\cite{s-biharmonic-book}) that the square of the Laplacian (also known as the biharmonic operator or bi-Laplacian) appears in the study of mechanics of elastic plates, hydrodynamics (slow flows of viscous fluids), and elasticity theory. Naturally, one is lead to the problem of finding sufficient conditions for the (essential) self-adjointness of a perturbation of the biharmonic operator by a potential. An important study in this regard is the paper~\cite{hdn-12} concerning the operators $P+V$, where $P$ is an elliptic differential operator of order $2m$, $m\in\NN$. As a corollary of the principal result of~\cite{hdn-12}, it was established that $\Delta^2+V$, where $\Delta$ is the standard Laplacian on $\mathbb{R}^n$ and $V\in\lloc^{\infty}(\mathbb{R}^n)$, is essentially self-adjoint on $C_{c}^{\infty}(\mathbb{R}^n)$ if $V(x)\geq -q(|x|)$, where $q\geq 1$ is a non-decreasing function such that $q(s)=O(s^{4/3})$. Soon after, the author of~\cite{brusentsev} considered the operator $(-\Delta)^{m}+V$, where $m\in\NN$ and $V\in\lloc^{\infty}(\mathbb{R}^n)$, and showed (here we only describe a special case of his result for $m=2$) that $\Delta^{2}+V$ is essentially self-adjoint if (i) $\Delta^{2}+V$ is semibounded from below on $C_{c}^{\infty}(\mathbb{R}^n)$ and (ii) $V(x)\geq -q(|x|)$, for all $x\in\mathbb{R}^n$, where $q\colon [0,\infty)\to [1,\infty)$ is a $C^2$-function such that $\int_{0}^{\infty}q^{-1/4}(t)\,dt=\infty$, $|(q^{-1/4})'|\leq C$ and $|(q^{-1/4})''|\leq Cq^{1/2}$, for some constant $C$.

One of the difficulties with extending the results from the preceding paragraph to a geodesically complete Riemannian manifold $(M,g)$ stems from the fact that the quadratic form for $\Delta_{g}^2$ is ``driven" by $\Delta_{g}$, the non-negative scalar Laplacian on $M$, which means that localising the problem requires a sequence of Laplacian cut-off functions (described in section~\ref{smooth-cut-off} below). Very recently, the authors of~\cite{bianchi} proved the existence of such a sequence on a geodesically complete Riemannian manifold whose Ricci curvature is lower semibounded by a (possibly unbounded) non-positive function depending on $d_{g}(x_0,\cdot)$, the distance from a fixed reference point $x_0$. Under these geometric assumptions, together with the hypothesis $V(\cdot)\geq -q(d_{g}(x_0,\cdot))$, where $V\in\lloc^{\infty}$ and $q\geq 1$ is a non-decreasing function such that $q(s)=O(s)$, the essential self-adjointness of $\Delta_{B}^2+V$ on $C_{c}^{\infty}(E)$, where $\Delta_{B}$ is the Bochner Laplacian on a Hermitian vector bundle $E$ over $M$, was established in the paper~\cite{milatovic}.

In theorems~\ref{main_thm_1} and \ref{main_thm_2} of the present article (with the latter result pertaining to lower semibounded operators), the sufficient conditions for the (essential) self-adjointness of $L=\Delta_{B}^2+V$, with $\Delta_{B}$ as in the preceding paragraph, are expressed in terms of the completeness of the metrics $\tilde{g}:=Q^{\alpha}g$, where $g$ is the original metric on $M$, $Q\geq 1$ is a smooth function satisfying $V(x)\geq -Q(x)$ for all $x\in M$, and $\alpha=-3/2$ (or $\alpha=-1/2$).  These theorems  complement the main result of~\cite{milatovic} and serve as analogues of the results established in~\cite{braverman_1,lm,Oleinik94,shubin_1,shubin_2} in the setting of Schr\"odinger operators.  The objective of the method used in theorem~\ref{main_thm_1} is to show that the ``maximal operator" $L_{max}:=(L|_{C_{c}^{\infty}})^{*}$ is symmetric (here $T^*$ stands for the adjoint of $T$), while in the situation of theorem~\ref{main_thm_2}, the goal is to show that the closure of $L|_{C_{c}^{\infty}}$ is ``semimaximal" (see definition~\ref{semimax_def} for the meaning of the latter term).  The slight difference in two approaches is due to the presence of lower semiboundedness assumption in theorem~\ref{main_thm_2}, which enables us to use an abstract lemma from~\cite{brusentsev}. The key step in the method, common to both theorems, is to establish the finiteness of the ``energy-type" integrals $\|Q^{-1/4}\nabla u\|$ and $\|Q^{-1/2}\Delta_{B} u\|$ for all $u\in \Dom(L_{max})$, where $\nabla$ is the covariant derivative on $E$ and $\|\cdot\|$ is the $L^2$-norm. This is accomplished through a sequence of estimates relying on the Laplacian cut-off functions (with properties as in section~\ref{cut-off_section}).  After that, we bring in the metric $\tilde{g}=Q^{\alpha}g$ with appropriate $\alpha<0$, a device used in~\cite{shubin_1,shubin_2} for the Schr\"odinger operator situation, and use this metric (which we assume to be complete) in combination with the finiteness of the ``energy-type" integrals to finish the proofs of the theorems. The idea to arrive at the self-adjointness of Schr\"odinger operators (or, more generally, differential operators of order $2m$) on $\mathbb{R}^n$ through the finiteness of the ``energy-type" integrals can be traced back to the papers~\cite{rb-70} and~\cite{brb-74}, respectively, and subsequent adaptations of this idea (with appropriate refinements) to various Schr\"odinger-type operators on manifolds can be found in~\cite{braverman_1,lm,Oleinik94,shubin_1,shubin_2}.

In contrast to the Schr\"odinger operator situation, where the completeness of the transformed metric $\tilde{g}$ is sufficient to guarantee the existence of appropriate ``first-order" cut-off functions, here we need to ensure that, in addition to the completeness of $\tilde{g}$, the Ricci curvature tensor $Ric_{\tilde{g}}$ (obtained by transforming  $Ric_{{g}}$ as in proposition~\ref{Ricci_curvature_conform_trans}) is lower semi-bounded. As seen in the statements of corollaries~\ref{cor1_1} and~\ref{cor1_3}, this imposes some restrictions on $dQ$ and $Hess_{g} (Q)$, the Hessian of $Q$ with respect to $g$. These restrictions, in turn, place additional limitations on the growth of the function $Q$. Nevertheless, in the context of complete manifolds $(M,g)$ with bounded sectional curvature $Sec_{g}$ (see theorems~\ref{main_thm_1-s} and~\ref{main_thm_2-s}, the latter one pertaining to lower semi-bounded operators),  we are able to use the method of ``energy-type" integrals without reliance on the (sectional) curvature transformation formula. Concerning the perturbation $V\in\lloc^{\infty}$, in theorems~\ref{main_thm_1-s} and~\ref{main_thm_2-s} we assume that $V(x) \geq -(f\circ r_{g})(x)$ for all $x\in M$, where $f\colon (0,\infty)\to[1,\infty)$ is a smooth non-decreasing function and $r_{g}$ is the distance corresponding to $g$ from a fixed reference point. Here, the growth of $f$ is controlled by the condition  $\int_{0}^{\infty}f^{-\rho/4}(t)\,dt=\infty$ with $\rho=3$ (respectively, $\rho=1$) and some conditions on the first and second derivative of $f$. Theorems~\ref{main_thm_1-s} and~\ref{main_thm_2-s} lead to examples~\ref{ex-1} and~\ref{ex-2} (the latter example pertains to lower semi-bounded operators), which allow $f(t)=(t+1)^{4/3}$ and $f(t)=(t+1)^{4}$ respectively. Neither of these examples can be handled using theorem 2.1 from~\cite{milatovic}, which imposes the restriction $f(t)=O(t)$. The growth of $f$ in examples~\ref{ex-1} and~\ref{ex-2} is consistent with the conditions on $f$ imposed, respectively, in the articles~\cite{hdn-12} and~\cite{brusentsev} in the context of fourth-order operators on $\mathbb{R}^n$.

Let us outline the contents of the paper. In section
\ref{notation_main_theorems}, we explain the notation we will be using
and define the operators we will be working with. We also state the
main theorems and describe two examples. In section \ref{prelim_lemmas} we establish the key estimates that will be needed in the proofs of the main theorems. In sections \ref{proof_thm_1} and \ref{proof_thm_1-s} we give the proofs of the main results, and we prove the corollaries  accompanying the first and the third theorem in section~\ref{S:cor-1}. Finally, in the appendix, we recall some formulas used in various parts of the paper.

\section{Notation and statement of main theorems}\label{notation_main_theorems}

Throughout this paper, $M$ will denote a smooth connected
Riemannian $n$-manifold without boundary. Given a Riemannian
metric $g$ on $M$, we can form the Laplace-Beltrami operator
on functions, which we will always denote by $\Delta_g$. The symbol
$d\mu$ will indicate the volume form associated to $g$.
The Riemannian metric $g$ defines a norm on all tensor powers
of the tangent bundle and cotangent bundle associated to $M$. We will
denote this norm by $|\cdot|$, and often denote the inner product
by $\langle \cdot , \cdot \rangle$;
the context will make it clear
as to which bundle we are applying it to. When needing to emphasize the metric $g$, we will be using the notations $|\cdot|_{g}$ and $\langle \cdot , \cdot \rangle_{g}$.

Fixing a reference point $x_0\in M$, we define
\begin{equation}\label{riem-dist}
r_{g}(x) := d_g(x_0, x),
\end{equation}
where $d_g$ denotes the Riemannian distance function associated to $g$. Furthermore, for $x_0\in M$ and $\kappa>0$ we define
\begin{equation*}
B_{\kappa}(x_0) := \{x\in M\colon r_{g}(x)<\kappa\}.
\end{equation*}

The symbols $Ric_{g}$ and $Sec_{g}$ will stand for the Ricci curvature and sectional curvature corresponding to the metric $g$.
In this article, the inequality $Ric_{g}\geq-K$ for some constant $K\geq0$ will be understood in the following sense: for all $x$ in $M$ and all $X\in T_{x}M$, we have
$Ric_{g}(X,X)\geq -K\langle X,X\rangle_{g}$.

We will fix a smooth Hermitian vector bundle $(E, h)$ over $M$, with
Hermitian metric $h$. We will also fix a Hermitian connection $\nabla$
on $E$.  We note that the two metrics
$h$ and $g$ can then be used to define metrics on tensor powers
of $E$ with the tangent and cotangent bundles of $M$. We will simply
denote these induced metrics by $\langle\cdot, \cdot\rangle$, and
the associated norms by $|\cdot |$; the context will make it clear as to which bundle we are working on.

The notations $C^{\infty}(M)$, $C^{\infty}_c(M)$ will indicate
smooth functions and smooth functions with compact support on $M$
respectively. Similarly, the symbols $C^{\infty}(E)$ and
$C^{\infty}_c(E)$ will denote smooth sections and smooth sections
with compact support on $E$ respectively.

For  $f\in C^{\infty}(M)$, the symbol $Hess_{g} (f)$ will be understood as
$Hess_{g} (f):=\nabla^{lc,g}df$, where $\nabla^{lc,g}$ is the covariant derivative on $T^*M$ induced from the Levi--Civita connection on $(M,g)$ and $d$ is the standard differential.

The notation  $L^2(E)$ will indicate the Hilbert space of square integrable sections of $E$, with inner product
\begin{equation*}
( u, v) := \int_{M}h(u, v)d\mu.
\end{equation*}
The associated $L^2$-norm will be denoted by
\begin{equation*}
\vert\vert u\vert\vert := \bigg{(}
\int_M\vert u\vert^2d\mu\bigg{)}^{1/2},
\end{equation*}
where $\vert u\vert^2 = h(u, u)$.

The symbol $W^{k,p}_{loc}(E)$, $1\leq p\leq\infty$, will stand for the
local Sobolev spaces of $L^p$-type sections, with $k$ indicating the highest order of
derivatives. For $k=0$, we simply
write $L^{p}_{loc}(E)$. The space of compactly supported elements of $W^{k,p}_{loc}(E)$
will be denoted by $W^{k,p}_{comp}(E)$. In the case $E=M \times \mathbb{C}$, we will use the symbols
$W^{k,p}_{loc}(M)$ and $W^{k,p}_{comp}(M)$.


The formal adjoint of $\nabla$ with respect to $(\cdot,\cdot)$ will be denoted by
$\nabla^{\dagger}$, with the associated Bochner Laplacian being given
by $\Delta_{B} := \nabla^{\dagger}\nabla$. In the case $E=M\times \mathbb{C}$ and $\nabla=d$, the operator $\Delta_{B}$ becomes the usual Laplace-Beltrami operator $\Delta_{g}:=d^{\dagger}d$. We will be studying perturbations of the biharmonic operator, associated
to the Bochner Laplacian on $E$, defined by
\begin{equation*}
L := (\Delta_{B})^2 + V,
\end{equation*}
where $V$ is a linear self-adjoint bundle map in $\lloc^{\infty}(\End E)$.

We now state the principal results of the paper.

\begin{thm}\label{main_thm_1}
Let $(M, g)$ be a Riemannian manifold and  let $(E, h)$ be
a Hermitian vector bundle over $M$, with Hermitian metric $h$.
Fix a Hermitian connection $\nabla$ on $E$. Let $V$ be a
self-adjoint endomorphism of $E$ with $V \in \lloc^{\infty}(\End E)$.
Let $Q\colon M \rightarrow [1, \infty)$ be a function in $C^{\infty}(M)$
satisfying
\begin{itemize}
\item[(i)] $|dQ|_g \leq CQ^{5/4}$,
\item[(ii)] $|\Delta_gQ| \leq CQ^{3/2}$,
\end{itemize}
for all $x\in M$, where $C\geq 0$ is some constant. Suppose $V(x) \geq -Q(x)$ for all $x \in M$. Furthermore, suppose that the metric
$\tilde{g} := Q^{-3/2}g$ is complete. Lastly, assume that $Ric_g \geq -K_1$ and
$Ric_{\tilde{g}} \geq -K_2$ for some constants $K_j \geq 0$, $j=1,2$. Then $L$ is essentially self-adjoint on $C_c^{\infty}(E)$.
\end{thm}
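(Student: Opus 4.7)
The strategy is to prove that the maximal operator $L_{max} := (L|_{C_c^\infty(E)})^*$ is symmetric on its domain, which is equivalent to the essential self-adjointness of $L$ on $C_c^\infty(E)$. For $u \in \Dom(L_{max})$, interior elliptic regularity gives $u \in W^{4,2}_{loc}(E)$, so the pointwise pairings below make sense. The argument splits into two phases that exploit the two available metrics: the original $g$, used to derive weighted $L^2$-bounds on $\nabla u$ and $\Delta_B u$, and the conformally changed $\tilde{g}$, used to close the symmetry argument. As a preliminary, note that $Q \geq 1$ forces $|\cdot|_{\tilde{g}} \leq |\cdot|_g$ and hence $d_{\tilde{g}} \leq d_g$, so completeness of $\tilde{g}$ automatically implies completeness of $(M,g)$, which is needed for Phase 1.

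\emph{Phase 1: weighted energy bounds.} I would first establish that $\|Q^{-1/4}\nabla u\|$ and $\|Q^{-1/2}\Delta_B u\|$ are finite for every $u \in \Dom(L_{max})$. Combined with $Ric_g \geq -K_1$, completeness of $g$ produces Laplacian cut-off functions $\chi_k$ on $(M,g)$ of the type constructed in~\cite{bianchi}. I would test $(Lu, \eta_k u)$ against a weight of the form $\eta_k := \chi_k^p Q^{-1}$ for a sufficiently large integer $p$. The potential contribution is controlled by $(Vu, \eta_k u) \geq -\|u\|^2$ via $V \geq -Q$, and integrating by parts twice in the biharmonic term gives
\begin{equation*}
\int_M \eta_k |\Delta_B u|^2\,d\mu = (Lu, \eta_k u) - (Vu, \eta_k u) - (\Delta_B u, (\Delta_g \eta_k) u) + 2(\Delta_B u, \nabla_{\nabla\eta_k} u).
\end{equation*}
The hypotheses $|dQ|_g \leq CQ^{5/4}$ and $|\Delta_g Q| \leq CQ^{3/2}$ are precisely calibrated so that $|\nabla\eta_k|^2/\eta_k$ and $|\Delta_g\eta_k|^2/\eta_k$ remain uniformly bounded in both $k$ and $x$; a weighted Cauchy-Schwarz with small parameter then absorbs a fraction of $\int \eta_k |\Delta_B u|^2\,d\mu$ to the left, leaving a $k$-independent bound by a constant times $\|Lu\|\|u\| + \|u\|^2$. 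Monotone convergence as $k \to \infty$ gives $\|Q^{-1/2}\Delta_B u\| < \infty$, and a parallel test weight (or an intermediate step of the same computation) controls $\|Q^{-1/4}\nabla u\|$.

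\emph{Phase 2: symmetry via conformal cut-offs.} For $u, v \in \Dom(L_{max})$, I would use first-order cut-offs $\psi_k$ adapted to $\tilde{g}$: completeness of $\tilde{g}$ together with $Ric_{\tilde{g}} \geq -K_2$ supplies a sequence with $|\nabla\psi_k|_{\tilde{g}}$ and $|\Delta_{\tilde{g}}\psi_k|$ uniformly bounded, which translates via $\tilde{g} = Q^{-3/2}g$ to $|\nabla\psi_k|_g = O(Q^{-3/4})$ and (using the conformal change formula for the Laplacian together with hypothesis (i)) $|\Delta_g\psi_k| = O(Q^{-1/2})$. Since $\psi_k v \in W^{4,2}_{comp}(E)$, approximation by $C_c^\infty(E)$ gives $(Lu, \psi_k v) = (u, L(\psi_k v))$, so it suffices to show $(u, [\Delta_B^2, \psi_k]v) \to 0$ as $k \to \infty$. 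Using the identity
\begin{equation*}
[\Delta_B^2, \psi_k] = \Delta_B [\Delta_B, \psi_k] + [\Delta_B, \psi_k]\Delta_B, \qquad [\Delta_B, \psi_k] = (\Delta_g\psi_k)\Id - 2\nabla_{\nabla\psi_k},
\end{equation*}
together with one further integration by parts (built on the adjoint identity $(\nabla_{\nabla\psi_k} u, v) + (u, \nabla_{\nabla\psi_k} v) = (u, (\Delta_g\psi_k) v)$), the pairing $(u, [\Delta_B^2,\psi_k]v)$ becomes a finite sum of inner products between $\Delta_B u, \nabla u, u$ and $\Delta_B v, \nabla v, v$, weighted by $\nabla\psi_k$ or $\Delta_g\psi_k$. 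The decay rates $O(Q^{-3/4})$ and $O(Q^{-1/2})$ match exactly the weights $Q^{-1/4}$ and $Q^{-1/2}$ from Phase 1, so each term is dominated by the global weighted energies restricted to the annular transition region $\{x \in M : \nabla\psi_k(x) \neq 0\}$, which escapes to infinity; the global finiteness from Phase 1 then forces these tail contributions to vanish.

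The principal technical obstacle is the simultaneous control in Phase 1: keeping both $|\nabla\eta_k|^2/\eta_k$ and $|\Delta_g\eta_k|^2/\eta_k$ uniformly bounded requires a careful choice of the cut-off power $p$ (large enough that derivatives of $\chi_k^p$ carry enough factors of $\chi_k$ to cancel in the ratio) and crucially exploits the exact exponents $5/4$ and $3/2$ in hypotheses (i) and (ii). The weight $Q^{-1}$ in $\eta_k$ and the conformal exponent $-3/2$ in $\tilde{g}$ are then uniquely calibrated so that the residual $Q$-powers surviving the integrations by parts match the weighted energies consumed by Phase 2.
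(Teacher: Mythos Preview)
Your two-phase architecture is exactly the paper's: first establish finiteness of $\|Q^{-1/4}\nabla u\|$ and $\|Q^{-1/2}\Delta_B u\|$ via Laplacian cut-offs for $(M,g)$, then use Laplacian cut-offs for $(M,\tilde g)$ to prove symmetry of $L_{\max}$. The conformal conversions you cite for $|d\psi_k|_g$ and $|\Delta_g\psi_k|$ are correct, and your Phase~2 commutator argument (with the extra integration by parts to eliminate third-order terms) works.

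There is, however, a genuine gap in Phase~1. In your identity the cross term $2(\Delta_B u,\nabla_{\nabla\eta_k}u)$, after weighted Cauchy--Schwarz, produces
\[
\int_M \frac{|d\eta_k|_g^2}{\eta_k}\,|\nabla u|^2\,d\mu,
\]
and a direct computation (using $|dQ|_g\le CQ^{5/4}$) shows that $|d\eta_k|_g^2/\eta_k$ behaves like $Q^{-1/2}$ in its worst $Q$-regime. So this term is comparable to a localized version of $\|Q^{-1/4}\nabla u\|^2$, \emph{not} to $\|u\|^2$ as your sketch asserts. You therefore cannot close the $J_2$-bound independently of the $J_1$-bound; the two are coupled. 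The paper resolves this by working with localized functionals $J_1^{(k)}[u]:=\|\chi_k^3Q^{-1/4}\nabla u\|$ and $J_2^{(k)}[u]:=\|\chi_k^4Q^{-1/2}\Delta_B u\|$ simultaneously and proving the interpolation inequality
\[
(J_1^{(k)}[u])^2\le\varepsilon(J_2^{(k)}[u])^2+G_\varepsilon\|u\|^2,
\]
obtained from the identity $(\psi_k\nabla u,\nabla u)=(\psi_k\Delta_B u,u)-(\nabla_{(d\psi_k)^\sharp}u,u)$ with $\psi_k=\chi_k^4Q^{-1/2}$. Feeding this back into the $J_2^{(k)}$-inequality with $\varepsilon$ small enough is what finally yields the uniform bound $(J_2^{(k)}[u])^2\le C(\|Lu\|\|u\|+\|u\|^2)$. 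Your phrase ``an intermediate step of the same computation'' points in the right direction, but the logical order must be: derive the interpolation first, then close. This is the actual ``principal technical obstacle,'' not the uniform boundedness of $|d\eta_k|^2/\eta_k$ and $|\Delta_g\eta_k|^2/\eta_k$ that you flag at the end.

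A minor remark on Phase~2: you rely on the transition annuli escaping to infinity together with global finiteness of the weighted energies to force the commutator terms to zero. The paper instead uses the stronger property that the $\tilde g$-Laplacian cut-offs satisfy $|d\xi_k|_{\tilde g}\le C/k$ and $|\Delta_{\tilde g}\xi_k|\le C/k$, so that after conformal conversion one has $|d\xi_k|_g\le (C/k)Q^{-3/4}$ and $|\Delta_g\xi_k|\le (C/k)Q^{-1/2}$ with the explicit factor $1/k\to 0$ doing the work. Both mechanisms are valid here.
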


\begin{rem} \label{comment-compl} Keeping in mind the condition $Q\geq 1$, we see that the completeness of $Q^{-3/2}g$ implies the completeness of $g$.
\end{rem}

\begin{rem}\label{rem-cut-off} The assumption of boundedness from below by a constant for $Ric_g$ and $Ric_{\tilde{g}}$ in theorem~\ref{main_thm_1} (and theorem~\ref{main_thm_2} below) may be weakened so that Ricci curvatures $Ric_g$ and $Ric_{\tilde{g}}$ are bounded from below by certain (non-positive) functions (as in corollary of 2.3 of~\cite{bianchi}) depending on the distances $d_{g}(\cdot,x_0)$ and $d_{\tilde{g}}(\cdot,x_0)$ from a fixed reference point $x_0$, respectively.
\end{rem}

\begin{cor}\label{cor1_1} Let $(M,g)$ be a Riemannian manifold and let $E$ be a Hermitian vector bundle over $M$ with Hermitian connection $\nabla$. Assume that $Ric_g\geq 0$. Let $Q : M \rightarrow [1, \infty)$ be a function in $C^{\infty}(M)$ satisfying
\[
|dQ|_g \leq CQ^{1/4},\quad |Hess_{g}(Q)|_g \leq CQ^{-1/2},
\]
for all $x\in M$, where $C\geq 0$ is some constant.   Assume that the metric $Q^{-3/2}g$ is complete and $V\geq -Q$. Then,
$L$ is essentially self-adjoint on $C_c^{\infty}(E)$.
\end{cor}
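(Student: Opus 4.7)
The plan is to derive corollary~\ref{cor1_1} directly from theorem~\ref{main_thm_1} by verifying its hypotheses in this simpler setting. The conditions $V\geq -Q$ and completeness of $\tilde g=Q^{-3/2}g$ are already assumed, and $Ric_g\geq 0$ gives the required lower bound on the $g$-Ricci curvature (with $K_1=0$). What remains is to check hypotheses~(i) and~(ii) on $Q$ in theorem~\ref{main_thm_1}, and to produce a uniform lower bound for $Ric_{\tilde g}$.

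For hypothesis~(i), since $Q\geq 1$ the estimate $|dQ|_g\leq CQ^{1/4}$ immediately gives $|dQ|_g\leq CQ^{5/4}$. For hypothesis~(ii), I would use that $\Delta_gQ$ is, up to sign, the $g$-trace of $Hess_g(Q)$, so $|\Delta_gQ|\leq n|Hess_g(Q)|_g\leq CnQ^{-1/2}\leq CnQ^{3/2}$, again by $Q\geq 1$.

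The main computation is the Ricci lower bound for $\tilde g$. I would apply the conformal change formula (proposition~\ref{Ricci_curvature_conform_trans}) with $\tilde g=e^{2\phi}g$, where $\phi=-\tfrac{3}{4}\log Q$. The relevant derivatives of $\phi$ are controlled by the hypotheses via
$$d\phi=-\tfrac{3}{4}\,\frac{dQ}{Q},\qquad Hess_g(\phi)=-\tfrac{3}{4}\Big(\frac{Hess_g(Q)}{Q}-\frac{dQ\otimes dQ}{Q^{2}}\Big),$$
and combining $|dQ|_g\leq CQ^{1/4}$, $|Hess_g(Q)|_g\leq CQ^{-1/2}$, and $Q\geq 1$ yields
$$|d\phi|_g^{2}+|Hess_g(\phi)|_g+|\Delta_g\phi|\leq C'\, Q^{-3/2}$$
for some constant $C'\geq 0$. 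Each correction term in the conformal Ricci formula is built from $Hess_g(\phi)$, $d\phi\otimes d\phi$, or a scalar multiple of $g$; as a symmetric bilinear form on $T_xM$, each is therefore dominated in absolute value by a constant multiple of $Q^{-3/2}g=\tilde g$. Combined with $Ric_g\geq 0$, this produces $Ric_{\tilde g}\geq -K_2\tilde g$ pointwise.

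I expect the only real (though routine) difficulty to be bookkeeping the signs and numerical coefficients in the conformal Ricci formula and in the Hessian identity for $\log Q$; the underlying pointwise estimates are immediate once one uses $Q\geq 1$ to absorb powers. With these steps in place, essential self-adjointness of $L$ on $C_c^\infty(E)$ follows by a direct application of theorem~\ref{main_thm_1}.
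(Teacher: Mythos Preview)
Your proposal is correct and follows essentially the same approach as the paper: reduce to theorem~\ref{main_thm_1} by checking conditions~(i)--(ii) via $Q\geq 1$ and the trace bound on the Hessian, and establish the lower bound on $Ric_{\tilde g}$ by applying the conformal Ricci formula with $\phi=-\tfrac{3}{4}\log Q$ to show the correction terms are $O(Q^{-3/2})$ in the $g$-norm and hence bounded with respect to $\tilde g=Q^{-3/2}g$. The paper packages the Ricci computation into a separate proposition (proposition~\ref{Ric_bounded_conform_trans}), but the content is identical to what you outline.
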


The next theorem is concerned with manifolds of bounded sectional curvature.

\begin{thm}\label{main_thm_1-s}
Let $(M, g)$ be a Riemannian manifold with $|Sec_{g}|_{g}\leq K_3$, where $K_3\geq 0$ is a constant. Let $(E, h)$ be
a Hermitian vector bundle over $M$, with Hermitian metric $h$.
Fix a Hermitian connection $\nabla$ on $E$. Let $V$ be a
self-adjoint endomorphism of $E$ with $V \in \lloc^{\infty}(\End E)$.
Let $f\colon [0,\infty) \rightarrow [1, \infty)$ be a function
satisfying the following properties:
\begin{itemize}
\item[(i)] $f$ is smooth and non-decreasing,
\item[(ii)] $\int_{0}^{\infty}f^{-3/4}(t)\,dt=\infty$,
\item[(iii)] $|f'(t)| \leq Cf^{5/4}(t)$, for all $t\geq 0$,
\item[(iv)] $|f''(t)| \leq Cf^{3/2}(t)$, for all $t\geq 0$,
\end{itemize}
where $C\geq 0$ is some constant. Suppose $V(x) \geq -(f\circ r_{g})(x)$ for all $x\in M$, where $r_{g}$ is as in~(\ref{riem-dist}). Then $L$ is is essentially self-adjoint on $C_c^{\infty}(E)$.
\end{thm}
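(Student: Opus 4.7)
The plan is to follow the strategy of Theorem~\ref{main_thm_1}: establish that the maximal operator $L_{max} := (L|_{C_c^\infty(E)})^*$ is symmetric by showing that the ``energy-type'' integrals $\|Q^{-1/4}\nabla u\|$ and $\|Q^{-1/2}\Delta_{B} u\|$ are finite for every $u \in \Dom(L_{max})$, where $Q$ is a smooth majorant of $f\circ r_g$. The hypothesis $|Sec_g|_g \leq K_3$ will replace the completeness of the conformal metric used in Theorem~\ref{main_thm_1} and allow us to bypass transforming Ricci curvatures altogether.

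First, I would smooth the Lipschitz function $r_g$ to obtain $\tilde r \in C^{\infty}(M)$ satisfying $r_g - 1 \leq \tilde r \leq r_g + 1$, $|d\tilde r|_g \leq C_1$, and $|\Delta_g \tilde r| \leq C_2$. The bound $|Sec_g|_g \leq K_3$ enters here via the Hessian comparison theorem, which, combined with a Greene--Wu or heat-flow mollification to remove the cut locus, gives two-sided control of $Hess_{g}(\tilde r)$ and hence of $\Delta_g \tilde r$. Setting $Q := (f\circ \tilde r) + 1 \geq 1$, the chain rule and hypotheses~(iii)--(iv) yield
\begin{equation*}
|dQ|_g \leq C Q^{5/4}, \qquad |\Delta_g Q| \leq C Q^{3/2},
\end{equation*}
while the monotonicity of $f$ combined with $\tilde r \geq r_g - 1$ gives $V \geq -CQ$.

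Next, invoking hypothesis~(ii), I would set $\phi(s) := \int_0^s f^{-3/4}(t)\,dt$, so that $\phi$ is strictly increasing and unbounded, and define the proper function $\Psi := \phi\circ \tilde r$. A direct computation using~(iii) shows $|d\Psi|_g \leq C Q^{-3/4}$ and $|\Delta_g \Psi| \leq C Q^{-1/2}$. I would then assemble cut-offs $\chi_k := \eta(\Psi/k)$ for a fixed $\eta \in C^{\infty}_c([0,\infty))$ equal to $1$ on $[0,1/2]$ and $0$ on $[1,\infty)$; these satisfy $\chi_k \to 1$ pointwise, together with
\begin{equation*}
|d\chi_k|_g \leq (C/k) Q^{-3/4}, \qquad |\Delta_g \chi_k| \leq (C/k) Q^{-1/2}.
\end{equation*}

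With these cut-offs in hand, I would run the scheme from the proof of Theorem~\ref{main_thm_1}: for $u \in \Dom(L_{max})$, which lies in $W^{4,2}_{loc}(E)$ by elliptic regularity, pair $Lu$ with suitably weighted test sections built from $\chi_k$, integrate by parts, and organise the commutator terms involving $d\chi_k$ and $\Delta_g\chi_k$ using the weighted bounds above. Cauchy--Schwarz together with absorption then yields bounds on $\|\chi_k Q^{-1/4}\nabla u\|$ and $\|\chi_k Q^{-1/2}\Delta_{B} u\|$ that are uniform in $k$, and monotone convergence delivers the desired finiteness of the two energy integrals. Symmetry of $L_{max}$ would then follow by approximating arbitrary $u, v \in \Dom(L_{max})$ by $\chi_k u$ and $\chi_k v$, integrating by parts in $(Lu,v) - (u,Lv)$, and showing that the commutator contributions vanish in the limit by Cauchy--Schwarz applied against the finite energy integrals. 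I expect the main obstacle to be the construction of $\tilde r$ with the required two-sided Laplacian bound from only the sectional curvature hypothesis, together with the tight bookkeeping of the exponents $-1/4$, $-1/2$, $-3/4$, $5/4$, $3/2$ in the absorption step, where the margin between what the commutators produce and what the energy integrals can absorb is narrow.
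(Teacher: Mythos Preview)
Your approach is essentially the paper's: the paper invokes a theorem of Shi to produce, from the bounded sectional curvature hypothesis, a smooth $\beta$ with $r_g+1\le\beta\le r_g+\widehat C$, $|d\beta|_g\le\widehat C$, and $|Hess_g(\beta)|_g\le\widehat C$; it then sets $Q:=f\circ\beta$ and $P(x):=\int_0^{\beta(x)}f^{-3/4}(t)\,dt$ (your $\Psi$), builds cut-offs from $P$, and feeds everything into the abstract energy-integral machinery of section~\ref{prelim_lemmas} exactly as you outline. One small correction: you need the smoothed distance to satisfy $\tilde r\ge r_g$ (as the paper's $\beta$ does), not merely $\tilde r\ge r_g-1$, so that monotonicity gives $V\ge-f(r_g)\ge-f(\tilde r)$ directly; with your stated inequality the step ``$V\ge-CQ$'' would require a doubling bound $f(t+1)\le Cf(t)$, which does not follow from hypotheses~(iii)--(iv), and shifting $\tilde r$ up by a constant resolves this. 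A minor organisational difference is that the paper uses two separate cut-off sequences (one for the finiteness of $J_1,J_2$ via property~(e4), another with coefficients $p_k\to0$ via~(e5) for the symmetry step), whereas your single sequence $\eta(\Psi/k)$, having $|d\chi_k|_g\le(C/k)Q^{-3/4}$ and $|\Delta_g\chi_k|\le(C/k)Q^{-1/2}$, in fact serves both roles.
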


The following example illustrates theorem~\ref{main_thm_1-s}:

\begin{exs}\label{ex-1} Let $(M, g)$ be a complete Riemannian manifold satisfying the property $|Sec_g|_{g}\leq K_{3}$, where $K_{3}\geq 0$ is some constant. Let $E$ be a Hermitian vector bundle over $M$ with a Hermitian connection $\nabla$. Let $f(t)=(t+1)^{4/3}$, where $t\geq 0$. Suppose that $V$ is a self-adjoint endomorphism such that $V \in \lloc^{\infty}(\End E)$ and $V(x)\geq -f(r_{g}(x))$, for all $x\in M$, where $r_{g}$ is as in~(\ref{riem-dist}).  It is easy to check that the function $f$ satisfies the hypotheses of theorem~\ref{main_thm_1-s}. Therefore, $L$ is essentially self-adjoint on $C_c^{\infty}(E)$. We point out that for this example we cannot use theorem 2.1 from~\cite{milatovic}, which imposes the requirement $f(t)=O(t)$. $\hfill\square$
\end{exs}

The last two theorems pertain to lower semi-bounded operators.

\begin{thm}\label{main_thm_2}
Assume that the hypotheses of theorem~\ref{main_thm_1} are satisfied with the following change: instead of the completeness of the metric $Q^{-3/2}g$, assume the completeness of $\tilde{g}:=Q^{-1/2}g$. (Now $\tilde{g}$ in the notation $Ric_{\tilde{g}}$ refers to $\tilde{g}=Q^{-1/2}g$). In addition to the hypothesis $V\geq -Q$, with $Q$ as in theorem~\ref{main_thm_1}, assume that there exists a constant $K_4\geq 0$ such that
\begin{equation}\label{E:semi-cond-1}
(Lu,u)\geq -K_4\|u\|^2
\end{equation}
for all $C_c^{\infty}(E)$. Then $L$ essentially self-adjoint on $C_c^{\infty}(E)$.
\end{thm}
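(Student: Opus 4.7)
The plan is to follow the strategy outlined in the introduction: since $L|_{C_{c}^{\infty}(E)}$ is now assumed to be bounded below by $-K_{4}$, I invoke the abstract lemma from~\cite{brusentsev} reducing essential self-adjointness of a symmetric semi-bounded operator to the semimaximality (in the sense of definition~\ref{semimax_def}) of its closure. The argument then proceeds in parallel with that of Theorem~\ref{main_thm_1}, centered on the two energy-type bounds
\begin{equation*}
\|Q^{-1/4}\nabla u\| < \infty, \qquad \|Q^{-1/2}\Delta_{B} u\| < \infty,
\end{equation*}
for every $u \in \Dom(L_{max})$, combined with the completeness of the transformed metric $\tilde{g} = Q^{-1/2}g$ (the exponent $-1/2$ replacing the $-3/2$ of Theorem~\ref{main_thm_1}) in order to approximate such $u$ by compactly supported sections.

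For the first step, I fix $u \in \Dom(L_{max})$, observe that $u \in W^{4,2}_{\loc}(E)$ by interior elliptic regularity for $\Delta_{B}^{2}$, and pair $L_{max}u$ against $\chi_{k}^{2} u$, where $\chi_{k} \in C_{c}^{\infty}(M)$ is a sequence of Laplacian cut-off functions for $g$ whose existence under $Ric_{g} \geq -K_{1}$ and completeness of $g$ is recalled in section~\ref{cut-off_section}. Repeated integration by parts, together with the hypotheses $|dQ|_{g} \leq CQ^{5/4}$ and $|\Delta_{g}Q| \leq CQ^{3/2}$, the pointwise lower bound $V \geq -Q$, and Cauchy--Schwarz splittings with weights $Q^{\pm 1/2}$ and $Q^{\pm 1/4}$, should yield a uniform-in-$k$ inequality of the form
\begin{equation*}
\|\chi_{k}Q^{-1/2}\Delta_{B} u\|^{2} + \|\chi_{k}Q^{-1/4}\nabla u\|^{2} \leq C'\bigl(\|u\|^{2} + \|L_{max}u\|\,\|u\|\bigr),
\end{equation*}
after which Fatou's lemma as $k \to \infty$ delivers the desired finiteness. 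These are essentially the computations developed in section~\ref{prelim_lemmas} and are shared with the proof of Theorem~\ref{main_thm_1}.

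For the second step, I pass to $\tilde{g} = Q^{-1/2}g$. Under completeness together with $Ric_{\tilde{g}} \geq -K_{2}$, one obtains first-order cut-off functions $\phi_{k} \in C_{c}^{\infty}(M)$ with $0 \leq \phi_{k} \leq 1$, $\phi_{k} \to 1$ pointwise on $M$, and $|d\phi_{k}|_{\tilde{g}} \leq C$. The conformal rescaling then gives $|d\phi_{k}|_{g} = Q^{-1/4}|d\phi_{k}|_{\tilde{g}} \leq CQ^{-1/4}$, matching exactly the weight appearing in the gradient energy bound. Inserting this into the commutator identity for $\Delta_{B}^{2}(\phi_{k} u)$, followed by a standard mollification to land in $C_{c}^{\infty}(E)$, I produce a sequence $u_{k}$ with $u_{k} \to u$ in $L^{2}(E)$ and $L u_{k} \to L_{max} u$ in the sense required by the abstract lemma; combined with the lower semi-boundedness~\eqref{E:semi-cond-1}, this verifies the semimaximality of $\overline{L|_{C_{c}^{\infty}(E)}}$ and completes the proof.

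The main obstacle, I expect, is the tightness of the weight $Q^{-1/4}$ in this second step: because the cut-offs for $\tilde{g} = Q^{-1/2}g$ are only first-order, the Laplacian $\Delta_{g}\phi_{k}$ is not automatically controlled on the order of $Q^{-1/2}$, so some care is required to absorb the error terms using the finiteness of $\|Q^{-1/2}\Delta_{B} u\|$ and the hypothesis~\eqref{E:semi-cond-1}. This is precisely the structural reason why the weaker conformal exponent $\alpha = -1/2$ suffices here, in contrast with the $\alpha = -3/2$ required in Theorem~\ref{main_thm_1}.
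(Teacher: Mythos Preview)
Your overall strategy matches the paper's --- energy bounds from section~\ref{prelim_lemmas}, semimaximality via Brusentsev's lemma, and cut-offs built from the conformal metric $\tilde g=Q^{-1/2}g$ --- but there is a genuine gap in the second step.

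You invoke the hypothesis $Ric_{\tilde g}\ge -K_2$ yet take only \emph{first-order} cut-offs $\phi_k$ for $\tilde g$, and then flag the lack of control on $\Delta_g\phi_k$ as an obstacle to be absorbed by $J_2[u]<\infty$ and~\eqref{E:semi-cond-1}. This does not work. The semimaximality argument requires $(L(\phi_k u),\phi_k u)\to (L_{max}u,u)$, which by~\eqref{M-function_form_1} reduces to $P_{\phi_k}[u]\to 0$, and by lemma~\ref{M-function_form_2}
\[
P_{\phi_k}[u]=4\|\nabla_{(d\phi_k)^\#}u\|^2+\|u\,\Delta_g\phi_k\|^2-4\operatorname{Re}(\nabla_{(d\phi_k)^\#}u,u\,\Delta_g\phi_k)+2\operatorname{Re}(\Delta_{B}u,u|d\phi_k|_g^2).
\]
The term $\|u\,\Delta_g\phi_k\|^2$ contains neither $\nabla u$ nor $\Delta_{B}u$, so finiteness of $J_1[u]$, $J_2[u]$ is irrelevant to it, and the lower bound~\eqref{E:semi-cond-1} gives nothing here either. (Separately, $|d\phi_k|_{\tilde g}\le C$ with a \emph{fixed} constant would not even drive the first and last terms to zero; you need $|d\phi_k|_{\tilde g}\to 0$.)

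The paper closes exactly this gap by using $Ric_{\tilde g}\ge -K_2$ for what it is actually needed for: it guarantees \emph{Laplacian} cut-offs $\xi_k$ for $\tilde g$, with $|d\xi_k|_{\tilde g}\le C/k$ and $|\Delta_{\tilde g}\xi_k|\le C/k$ (remark~\ref{R: metric-tilde-g}). The conformal change formula for the Laplacian (lemma~\ref{l-s-2} with $\rho=1$, which also consumes the hypothesis $|dQ|_g\le CQ^{5/4}$) then yields $|d\xi_k|_g\le \frac{C}{k}Q^{-1/4}$ and $|\Delta_g\xi_k|\le \frac{C}{k}$, i.e.\ property~(e5) with $\rho=1$. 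With these bounds every term of $P_{\xi_k}[u]$ is dominated by $(C/k)^2$ times a finite combination of $J_0[u],J_1[u],J_2[u]$, so $P_{\xi_k}[u]\to 0$ and proposition~\ref{prop-key-ests-lsb} applies. In short: the assumption $Ric_{\tilde g}\ge -K_2$ is not decorative --- it upgrades first-order cut-offs to Laplacian cut-offs, and without that upgrade the obstacle you identified is fatal rather than absorbable.
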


\begin{rem} Keeping in mind the condition $Q\geq 1$, we see that the completeness of the metric $Q^{-1/2}g$ does not imply the completeness of $Q^{-3/2}g$. Therefore, theorem~\ref{main_thm_2} is not contained in theorem~\ref{main_thm_1}.
\end{rem}

\begin{cor}\label{cor1_3} Let $(M,g)$ be a Riemannian manifold and let $E$ be a Hermitian vector bundle over $M$ with Hermitian connection $\nabla$. Assume that $Ric_g\geq 0$. Let $Q : M \rightarrow [1, \infty)$ be a function in $C^{\infty}(M)$ satisfying
\[
|dQ|_g \leq CQ^{3/4},\quad |Hess_{g}(Q)|_g \leq CQ^{1/2},
\]
for all $x\in M$, where $C\geq 0$ is some constant.  Assume that the metric $Q^{-1/2}g$ is complete and $V\geq -Q$. Furthermore, assume
that $L$ satisfies~(\ref{E:semi-cond-1}). Then
$L$ is essentially self-adjoint on $C_c^{\infty}(E)$.
\end{cor}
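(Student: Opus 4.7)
The plan is to deduce Corollary~\ref{cor1_3} directly from Theorem~\ref{main_thm_2} by showing that every hypothesis of that theorem is implied by the assumptions of the corollary. The argument parallels the expected reduction of Corollary~\ref{cor1_1} to Theorem~\ref{main_thm_1}, but with the smaller conformal exponent $\alpha=-1/2$ matched against the weaker growth exponents $3/4$ and $1/2$ in the bounds on $dQ$ and $Hess_{g}(Q)$. The extra semi-boundedness hypothesis~(\ref{E:semi-cond-1}), which is not present in Theorem~\ref{main_thm_1}, is stated in the corollary itself and can be passed on verbatim.

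First I would verify the two basic growth bounds on $Q$ required by Theorem~\ref{main_thm_1}. Since $Q\geq 1$, the bound $|dQ|_g\leq CQ^{3/4}$ immediately upgrades to $|dQ|_g\leq CQ^{5/4}$. Using the trace inequality $|\Delta_g Q|\leq\sqrt{n}\,|Hess_g(Q)|_g$ together with the corollary's Hessian bound yields $|\Delta_g Q|\leq\sqrt{n}\,CQ^{1/2}\leq\sqrt{n}\,CQ^{3/2}$, again by $Q\geq 1$. The conditions $V\geq -Q$, completeness of $\tilde g:=Q^{-1/2}g$, and~(\ref{E:semi-cond-1}) are hypotheses of the corollary, and $Ric_g\geq 0$ trivially gives $Ric_g\geq -K_1$ with $K_1=0$.

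The only substantive step is to produce a constant $K_2\geq 0$ with $Ric_{\tilde g}\geq -K_2$. Writing $\tilde g=e^{2\varphi}g$ with $\varphi=-\tfrac14\log Q$ and applying the conformal transformation formula in Proposition~\ref{Ricci_curvature_conform_trans}, the correction to $Ric_g$ is built from the tensors $Hess_g(\varphi)$, $d\varphi\otimes d\varphi$, $\Delta_g\varphi\cdot g$, and $|d\varphi|_g^2\,g$. The identities $d\varphi=-\tfrac{1}{4Q}dQ$ and $Hess_g(\varphi)=-\tfrac{1}{4Q}Hess_g(Q)+\tfrac{1}{4Q^2}dQ\otimes dQ$, combined with the bounds on $|dQ|_g$ and $|Hess_g(Q)|_g$, give
\[
|d\varphi|_g^2\leq C'Q^{-1/2},\qquad |Hess_g(\varphi)|_g\leq C'Q^{-1/2},\qquad |\Delta_g\varphi|\leq C'Q^{-1/2}.
\]
Evaluating on any tangent vector $X$, each correction term is bounded in absolute value by $C''Q^{-1/2}|X|_g^2=C''|X|_{\tilde g}^2$. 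Combined with $Ric_g(X,X)\geq 0$, this yields $Ric_{\tilde g}(X,X)\geq -K_2|X|_{\tilde g}^2$, so Theorem~\ref{main_thm_2} applies and delivers the essential self-adjointness of $L$ on $C_c^{\infty}(E)$. The delicate point is the exponent bookkeeping in the Ricci calculation: the growth rates $3/4$ and $1/2$ in the corollary are precisely what makes every conformal correction term $\tilde g$-bounded, once the appropriate powers of $Q$ have been pulled out.
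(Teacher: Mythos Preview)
Your proposal is correct and follows essentially the same approach as the paper. The paper's proof cites Proposition~\ref{Ric_bounded_conform_trans-2} for the Ricci lower bound of $\tilde g$ and Proposition~\ref{Laplace_chain_rule}(iii) for the trace inequality, then applies Theorem~\ref{main_thm_2}; you have simply inlined the short computation behind Proposition~\ref{Ric_bounded_conform_trans-2} (the $\varphi=-\tfrac14\log Q$ calculation) rather than citing it, with the same exponent bookkeeping and the same reduction to Theorem~\ref{main_thm_2}.
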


\begin{thm}\label{main_thm_2-s}
Assume that the hypotheses of theorem~\ref{main_thm_1-s} are satisfied with the following change: instead of the condition (ii), assume that
\begin{equation*}
\int_{0}^{\infty}f^{-1/4}(t)\,dt=\infty.
\end{equation*}
In addition to the hypothesis $V(x) \geq -(f\circ r_{g})(x)$, where $r_{g}$ is as in~(\ref{riem-dist}), assume that~(\ref{E:semi-cond-1}) is satisfied. Then $L$ essentially self-adjoint on $C_c^{\infty}(E)$.
\end{thm}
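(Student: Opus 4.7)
The plan is to adapt the proof of theorem~\ref{main_thm_2} to the bounded sectional curvature setting, exactly as theorem~\ref{main_thm_1-s} adapts the proof of theorem~\ref{main_thm_1}. Using the sectional curvature bound $|Sec_g| \leq K_3$, replace $r_g$ by a smooth majorant $\rho \in C^{\infty}(M)$ (constructed, for instance, via Greene--Wu smoothing) and set $Q := f \circ \rho$, modified harmlessly to ensure $Q \geq 1$ on $M$ so that $V \geq -Q$ pointwise. Hypotheses (iii), (iv) together with the Hessian/Laplacian comparison bounds for $\rho$ (available under $|Sec_g| \leq K_3$) upgrade to the pointwise estimates $|dQ|_g \leq CQ^{5/4}$ and $|\Delta_g Q| \leq CQ^{3/2}$ on $M$; the bound $Ric_g \geq -(n-1)K_3$ is automatic.

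As in theorem~\ref{main_thm_2}, the backbone of the argument is the energy-type finiteness
\[
\|Q^{-1/4}\nabla u\| < \infty, \qquad \|Q^{-1/2}\Delta_B u\| < \infty \quad \text{for every } u \in \Dom(L_{max}).
\]
This is obtained through the a priori estimates of section~\ref{prelim_lemmas}, invoked with a sequence of Laplacian cut-offs on $(M,g)$, which exist by the construction of~\cite{bianchi} under the bounded sectional curvature hypothesis. This step runs essentially verbatim as in theorem~\ref{main_thm_2} and does not invoke the semi-boundedness assumption~(\ref{E:semi-cond-1}).

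To effect the localisation after the energy bounds, define $F(t) := \int_0^t f^{-1/4}(s)\,ds$ and introduce first-order cut-offs $\chi_k := \psi(k^{-1} F \circ \rho)$, with $\psi$ a standard plateau function equal to $1$ near $0$ and vanishing on $[1,\infty)$. The divergence $\int_0^\infty f^{-1/4}(t)\,dt = \infty$ is precisely what forces each $\chi_k$ to have compact support, and plays the role that completeness of $Q^{-1/2}g$ plays in theorem~\ref{main_thm_2}. The chain rule combined with (iii), (iv) and the comparison bounds on $\rho$ yields the weighted pointwise controls on $d\chi_k$ and $\Delta_g \chi_k$ needed to estimate the commutator $[L, \chi_k]u$ in $L^2$ against the energy-type finiteness of the previous step, so that $\chi_k u$ approximates $u$ in the relevant graph-type norm.

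Finally, the semi-boundedness hypothesis~(\ref{E:semi-cond-1}) permits the application of the abstract lemma from~\cite{brusentsev}, which converts the localisation-plus-energy data into the statement that the closure of $L|_{C_c^{\infty}(E)}$ is semimaximal, whence essential self-adjointness. The main technical obstacle is producing the required weighted pointwise bounds on $d\chi_k$ and $\Delta_g \chi_k$ uniformly in $k$ while using only the weaker divergence $\int_0^\infty f^{-1/4}(t)\,dt = \infty$ (rather than $\int_0^\infty f^{-3/4}(t)\,dt = \infty$ as in theorem~\ref{main_thm_1-s}); this weaker cut-off control is precisely why the semi-boundedness assumption is needed here to invoke Brusentsev's lemma, in place of the direct symmetry argument for $L_{max}$ that sufficed in theorem~\ref{main_thm_1-s}.
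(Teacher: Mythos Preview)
Your proposal is correct and follows essentially the same route as the paper. The paper invokes Shi's exhaustion function $\beta$ (playing the role of your $\rho$), sets $Q=f\circ\beta$, verifies the bounds $|dQ|_g\leq CQ^{5/4}$ and $|\Delta_gQ|\leq CQ^{3/2}$ via (iii)--(iv), then builds both the energy cut-offs $\chi_k$ and the localising cut-offs $\xi_k$ directly from the primitive $P(x)=\int_0^{\beta(x)}f^{-1/4}(t)\,dt$ rather than borrowing the first family from~\cite{bianchi} as you do; either choice feeds into proposition~\ref{prop-key-ests-lsb} (the semimaximality/Brusentsev route) in the same way. Two small remarks: no modification of $Q$ is needed since $f\geq 1$ already, and the bounds on $\rho$ come from the smoothing construction itself rather than from Hessian/Laplacian comparison.
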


The following example illustrates theorem~\ref{main_thm_2-s}:

\begin{exs}\label{ex-2} Here we use the same description as in example~\ref{ex-1} with the following change: let $f(t)=(t+1)^{4}$, where $t\geq 0$.
In addition to the hypothesis $V(x) \geq -f(r_{g}(x))$, assume that~(\ref{E:semi-cond-1}) is satisfied. Again, one easily checks that $f$ satisfies the hypotheses of theorem~\ref{main_thm_2-s}. Therefore, $L$ is essentially self-adjoint on $C_c^{\infty}(E)$. As in the case of example~\ref{ex-1}, we cannot use theorem 2.1 from~\cite{milatovic} here.
 $\hfill\square$
\end{exs}

\section{Preliminary lemmas}\label{prelim_lemmas}

\subsection{Cut-off functions}\label{cut-off_section} In this section, $M$ is a Riemannian manifold with metric $g$. In order to obtain suitable localised derivative estimates, we will assume that $(M,g)$ is equipped with a sequence of cut-off functions $\{\chi_k\}$, $k\in\NN$, with the following properties:

\begin{itemize}
\item[(e1)] For all $x \in M$ and all $k \in \NN$, we have
$0 \leq \chi_k(x) \leq 1$.

\item[(e2)] $\chi_k\in W^{4,\infty}_{comp}(M)$, where, in the Sobolev space notation, $4$ indicates the highest derivative and $\infty$ indicates the $L^{\infty}$-space.

\item[(e3)] For all $x\in M$, we have $\displaystyle\lim_{k\to\infty}\chi_k(x)=1$.

\item[(e4)]  If $Q\in C^{\infty}(M)$ is a function such that $Q\geq 1$ and if we set $\psi_k:= \chi_k^4Q^{-1/2}$,
then $\psi_k$ satisfies the following estimates:
\begin{equation*}
|d\psi_k|_{g} \leq C\chi_k^3Q^{-1/4}\text{ and } |\Delta_{g}\psi_k| \leq C\chi_k^2,
\end{equation*}
where $C>0$ is a constant independent of $k$.
\end{itemize}

\begin{rem}\label{gb-e}
As we will see later, the existence of a sequence $\{\chi_k\}$ with properties (e1)--(e4) is guaranteed under additional geometric assumptions on $(M,g)$ (such as completeness of $g$ and boundedness from below of $Ric_g$) and additional requirements on $Q$.
\end{rem}

\begin{rem}\label{chi-k-q-rem}
With $Q$ and $\{\psi_k\}$ as in (e4), we see that $0\leq \psi_k \leq 1$ and $\psi_k \in W^{4,\infty}_{comp}(M)$. Furthermore, if $u\in W^{4,2}_{loc}(E)$, then $\psi_k u\in W^{4,2}_{comp}(E)$.
\end{rem}

To simplify the notations in the remainder of the section, the symbol $|\cdot|$ refers to the norm with respect to $g$ or the metrics induced on $T^*M$ and $T^*M\otimes E$ from $g$ and the metric $h$ of the bundle $E$.

\subsection{Key estimates}\label{key_ests}
Here we establish the key estimates used in the proofs of the main theorems of this paper.
For all lemmas of this section, we assume that $M$ is a Riemannian manifold with metric $g$. Additionally, we assume that $Q\colon M\to [1,\infty)$ is a function belonging to $C^{\infty}(M)$.  Furthermore, we assume that $(E, h)$ is
a Hermitian vector bundle over $M$, with Hermitian metric $h$ and Hermitian connection $\nabla$. We also assume that $V$ is a self-adjoint endomorphism of $E$ with $V \in \lloc^{\infty}(\End E)$ and, starting from lemma~\ref{J_2^k-estimate}, we assume that $V(x)\geq -Q(x)$ for all $x\in M$.

We start with some operator theoretic preliminaries. We define the minimal operator associated to $L=(\Delta_{B})^2 + V$, where $\Delta_{B}=\nabla^{\dagger}\nabla$,  by
$L_{min} := \Delta_{B}^2 + V$ with $Dom(L_{min}) = C^{\infty}_c(E)$.
We define the maximal operator associated to $L$ by
$L_{max} := (L_{min})^*$. It is well known that the operator $L_{max}$ can
be described as follows: $L_{max}u = Lu$ with
$Dom(L_{max}) = \{u \in L^2(E) : Lu \in L^2(E)\}$, where $Lu$ is understood in the sense of distributional sections.

We remark that
$Dom(L_{max}) \subseteq W^{4,2}_{loc}(E)$, which follows from
elliptic regularity.

For $u \in Dom(L_{max})$, we define the following functionals:
\begin{equation*}
J_0[u]:= ||u||,\qquad
J_1[u]:= ||Q^{-1/4}\nabla u||, \qquad
J_2[u]:= ||Q^{-1/2}\Delta_{B} u||.
\end{equation*}

The key point of this section is to prove the finiteness of
$J_1[u]$ and $J_2[u]$, when $u \in Dom(L_{max})$. In order to do this, we will make use of localised versions of $J_1$ and $J_2$:
\begin{equation*}
J_1^{(k)}[u] := ||\chi_{k}^3Q^{-1/4}\nabla u||,\qquad
J_2^{(k)}[u] := ||\chi_{k}^4Q^{-1/2}\Delta_{B} u||,
\end{equation*}
where
$u \in Dom(L_{max})$, and the functions
$\{\chi_k\}$ are as in section~\ref{cut-off_section}.

Let $\psi_k$ be as in~(e4). We start by observing that for all $u \in Dom(L_{max})$, we have the following formula:
\begin{align}
(L(\psi_ku), \psi_ku) &= (\Delta_{B}^2(\psi_ku), \psi_ku) +(V (\psi_ku), \psi_ku)
= (\Delta_{B}(\psi_ku), \Delta_{B}(\psi_ku)) + (Vu,\psi_k^2u)\nonumber\\
&=(\Delta_{B}(\psi_ku), \Delta_{B}(\psi_ku))+(Lu,\psi_k^2u)-(\Delta_{B}^2u,\psi_k^2u)\nonumber,
\end{align}
where in the second equality we used integration by parts, which is allowed because (see remark~\ref{chi-k-q-rem}) we have $\psi_ku\in W^{4,2}_{comp}(E)$. After taking real parts on both sides, we obtain
\begin{equation}\label{M-function_form_1}
(L(\psi_ku), \psi_ku) =Re(\psi_k^2Lu, u) + P_{\psi_k}[u],
\end{equation}
where $P_{\psi_k}[u] := (\Delta_{B}(\psi_ku), \Delta_{B}(\psi_ku)) -
Re(\Delta_{B}^2u, \psi_k^2u)$.

We then have the following lemma:
\begin{lem}\label{M-function_form_2} Let $\psi_k$ be as in~(e4). Then,
for all $u \in Dom(L_{max})$ we have
\begin{equation*}
P_{\psi_k}[u] = 4||\nabla_{(d\psi_k)^\#}u||^2 + ||u\Delta_{g}\psi_k||^2
-4Re(\nabla_{(d\psi_k)^\#}u, u\Delta_{g}\psi_k) +
2Re(\Delta_{B} u, u|d\psi_k|^2),
\end{equation*}
where $(d\psi_k)^\#$ is the vector field corresponding to the form $d\psi_k$ via the metric $g$.
\end{lem}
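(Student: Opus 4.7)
The proof reduces to a careful algebraic identity based on the product rule for the Bochner Laplacian acting on a scalar-times-section. The plan is as follows.

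First, I would use the scalar-section identity
\[
\Delta_B(\psi u) = (\Delta_g \psi)\, u - 2\nabla_{(d\psi)^\#} u + \psi\, \Delta_B u,
\]
valid for any smooth scalar $\psi$ and section $u$ (this is the kind of formula recalled in the appendix). Applied with $\psi=\psi_k$, this lets me write $\Delta_B(\psi_k u) = A + B + C$, where $A=(\Delta_g\psi_k)u$, $B=-2\nabla_{(d\psi_k)^\#} u$, and $C=\psi_k\Delta_B u$. Expanding the norm squared, $\|\Delta_B(\psi_k u)\|^2$ splits into the three norms $\|A\|^2=\|u\Delta_g\psi_k\|^2$, $\|B\|^2=4\|\nabla_{(d\psi_k)^\#}u\|^2$, $\|C\|^2=\|\psi_k\Delta_B u\|^2$, together with the three cross terms $2\RE(A,B)$, $2\RE(A,C)$, $2\RE(B,C)$.

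Next I would handle $\RE(\Delta_B^2 u, \psi_k^2 u)$ by one integration by parts. Since $\psi_k^2 u\in W^{4,2}_{comp}(E)$ by remark~\ref{chi-k-q-rem} and $u\in W^{4,2}_{loc}(E)$ by elliptic regularity, the pairing shifts to $(\Delta_B u, \Delta_B(\psi_k^2 u))$. Applying the same product-rule identity with $\psi_k^2$ in place of $\psi$, together with the scalar identities $\Delta_g(\psi_k^2) = 2\psi_k\Delta_g\psi_k - 2|d\psi_k|^2$ and $(d\psi_k^2)^\# = 2\psi_k (d\psi_k)^\#$, I get
\[
\Delta_B(\psi_k^2 u) = 2\psi_k(\Delta_g\psi_k)u - 2|d\psi_k|^2 u - 4\psi_k\nabla_{(d\psi_k)^\#} u + \psi_k^2\Delta_B u.
\]

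Finally, I would subtract term by term. The $\|\psi_k\Delta_B u\|^2$ contributions cancel between $\|C\|^2$ and the last piece of $\RE(\Delta_B u,\Delta_B(\psi_k^2 u))$; the cross term $2\RE(A,C)=2\RE(\Delta_B u,\psi_k(\Delta_g\psi_k)u)$ is cancelled by the first piece of the integration-by-parts expansion (using that $\psi_k$ and $\Delta_g\psi_k$ are real and that the real part of the inner product is symmetric); and $2\RE(B,C)=-4\RE(\nabla_{(d\psi_k)^\#}u,\psi_k\Delta_B u)$ is cancelled by the third piece, $+4\RE(\Delta_B u,\psi_k\nabla_{(d\psi_k)^\#}u)$. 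What survives is precisely $\|u\Delta_g\psi_k\|^2 + 4\|\nabla_{(d\psi_k)^\#}u\|^2 - 4\RE(\nabla_{(d\psi_k)^\#}u, u\Delta_g\psi_k) + 2\RE(\Delta_B u, u|d\psi_k|^2)$, matching the claim.

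The only real obstacle is bookkeeping: one must be careful with signs in the product rule (since the convention here makes both $\Delta_g$ and $\Delta_B=\nabla^\dagger\nabla$ nonnegative), with the factor $2$ coming from $d(\psi_k^2)=2\psi_k d\psi_k$, and with freely moving the real scalar $\psi_k$ across the inner product when pairing up the cross terms that are meant to cancel.
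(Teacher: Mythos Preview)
Your proposal is correct and follows essentially the same approach as the paper's own proof: both expand $\|\Delta_B(\psi_k u)\|^2$ via the product rule, rewrite $\RE(\Delta_B^2 u,\psi_k^2 u)$ as $\RE(\Delta_B u,\Delta_B(\psi_k^2 u))$ using integration by parts and the same product rule applied to $\psi_k^2$ (together with $d\psi_k^2=2\psi_k d\psi_k$ and $\Delta_g\psi_k^2=2\psi_k\Delta_g\psi_k-2|d\psi_k|^2$), and then subtract. Your term-by-term bookkeeping of the cancellations is a bit more explicit than the paper's, but the argument is the same.
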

\begin{proof}
Using the product rule for the Laplacian, proposition
\ref{adjoint_product_formula}(ii), we find
\begin{align}\label{ref-prod-rule-1}
(\Delta_{B}(\psi_ku), \Delta_{B}(\psi_ku)) &= (\psi_k\Delta_{B} u -2\nabla_{(d\psi_k)^\#}u + u\Delta_{g}\psi_k,
\psi_k\Delta_{B} u - 2\nabla_{(d\psi_k)^\#}u + u\Delta_{g}\psi_k)\nonumber \\
&= (\psi_k\Delta_{B} u, \psi_k\Delta_{B} u) +
4(\nabla_{(d\psi_k)^\#}u, \nabla_{(d\psi_k)^\#}u) +
(u\Delta_{g}\psi_k, u\Delta_{g}\psi_k)\nonumber \\
&\hspace{0.5cm}-
4Re(\psi_k\Delta_{B} u, \nabla_{(d\psi_k)^\#}u) +
2Re(\psi_k\Delta_{B} u, u\Delta_{g}\psi_k) \nonumber\\
&\hspace{0.5cm}-4Re(\nabla_{(d\psi_k)^\#}u, u\Delta_{g}\psi_k).
\end{align}
We also have
\begin{align*}
Re(\Delta_{B}^2u, \psi_k^2u) &= Re(\Delta_{B} u, \Delta_{B}(\psi_k^2u))
= Re(\Delta_{B} u, \psi_k^2\Delta_{B} u - 2\nabla_{(d\psi^2_k)^\#}u +u\Delta_{g}\psi^2_k)\\
&= (\psi_k\Delta_{B} u, \psi_k \Delta_{B} u) -
4Re(\Delta_{B} u, \psi_k\nabla_{(d\psi_k)^\#}u) +
2Re(\Delta_{B} u, u\psi_k\Delta_{g}\psi_k) \\
&\hspace{0.5cm}- 2Re(\Delta_{B} u, u|d\psi_k|^2),
\end{align*}
where in the last equality we used the formulas $d\psi^2_k=2\psi_kd\psi_k$ and $\Delta_{g}\psi^2_k=2\psi_k\Delta_{g}\psi_k-2|d\psi_k|^2$.
Using these two computations, we then see that
\begin{align*}
P_{\psi_k}[u] &= (\Delta_{B}(\psi_ku), \Delta_{B}(\psi_ku)) -
Re(\Delta_{B}^2u, \psi_k^2u) \\
&= 4||\nabla_{(d\psi_k)^\#}u||^2 + ||u\Delta_g\psi_k||^2
- 4Re(\nabla_{(d\psi_k)^\#}u, u\Delta_g\psi_k) + 2Re(\Delta_{B} u, u|d\psi_k|^2).
\end{align*}
\end{proof}

\begin{lem}\label{M-function_estimate} Let $\psi_k$ be as in~(e4). Then,
for all $u \in Dom(L_{max})$ we have
\begin{equation*}
|P_{\psi_k}[u]| \leq C\bigg{(}(J_0[u])^2 + J_0[u]J_1^{(k)}[u] + (J_1^{(k)}[u])^2 + J_0[u]J_2^{(k)}[u]
\bigg{)},
\end{equation*}
where $C>0$ is a constant independent of $k$ and $u$.
\end{lem}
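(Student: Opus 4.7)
The plan is to start from the identity in Lemma~\ref{M-function_form_2} and bound each of the four terms in turn, using the pointwise estimates on $|d\psi_k|$ and $|\Delta_g\psi_k|$ supplied by property~(e4) together with the Cauchy--Schwarz inequality. Recall from (e4) that
\[
|d\psi_k| \leq C\chi_k^3 Q^{-1/4}, \qquad |\Delta_g\psi_k| \leq C\chi_k^2,
\]
with $C$ independent of $k$, and that $0 \leq \chi_k \leq 1$. Squaring the first gives $|d\psi_k|^2 \leq C^2\chi_k^6 Q^{-1/2}$, a bound I will exploit in the last term.

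For the first term, I would use the pointwise inequality $|\nabla_{(d\psi_k)^\#}u| \leq |d\psi_k|\,|\nabla u| \leq C\chi_k^3 Q^{-1/4}|\nabla u|$ and integrate to obtain $4\|\nabla_{(d\psi_k)^\#}u\|^2 \leq C'(J_1^{(k)}[u])^2$. For the second term, $\|u\Delta_g\psi_k\|^2 \leq C^2\|\chi_k^2 u\|^2 \leq C^2(J_0[u])^2$, using $\chi_k\leq 1$. For the third term, a direct application of Cauchy--Schwarz followed by the two previous pointwise bounds gives $|\mathrm{Re}(\nabla_{(d\psi_k)^\#}u, u\Delta_g\psi_k)| \leq \|\nabla_{(d\psi_k)^\#}u\|\cdot\|u\Delta_g\psi_k\| \leq C'\,J_0[u]\,J_1^{(k)}[u]$.

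The only term that requires a small maneuver is the last one, $2\mathrm{Re}(\Delta_B u, u|d\psi_k|^2)$, since this is where $J_2^{(k)}[u]$ must be manufactured. The key observation is to split the factor $\chi_k^6 Q^{-1/2}$ appearing in the bound on $|d\psi_k|^2$ as $(\chi_k^4 Q^{-1/2})\cdot \chi_k^2$, so that
\[
\bigl|\mathrm{Re}(\Delta_B u, u|d\psi_k|^2)\bigr| \leq C^2 \int \bigl(\chi_k^4 Q^{-1/2}|\Delta_B u|\bigr)\bigl(\chi_k^2 |u|\bigr)\,d\mu \leq C^2\,J_2^{(k)}[u]\,J_0[u],
\]
again by Cauchy--Schwarz and $\chi_k\leq 1$. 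Combining the four estimates and absorbing all constants into a single $C$ yields the desired inequality.

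I do not anticipate a real obstacle here; the only step requiring some care is the aforementioned splitting of $|d\psi_k|^2$, which is dictated by the precise exponents $\chi_k^4 Q^{-1/2}$ built into the definition of $J_2^{(k)}[u]$. Matching these exponents is exactly why (e4) is stated with the weights it has.
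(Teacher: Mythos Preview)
Your proposal is correct and follows essentially the same route as the paper: start from the identity in Lemma~\ref{M-function_form_2}, apply the triangle inequality, and bound each of the four terms using the pointwise estimates in~(e4) together with Cauchy--Schwarz and the fact that $0\leq\chi_k\leq 1$. The only cosmetic difference is that for the third term the paper estimates pointwise inside the integral before applying Cauchy--Schwarz, whereas you bound by the product of norms first; both arguments are equivalent.
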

\begin{proof}
Using the triangle inequality, from lemma~\ref{M-function_form_2} we get
\begin{equation*}
|P_{\psi_k}[u]| \leq 4||\nabla_{(d\psi_k)^\#}u||^2 + ||u\Delta_g\psi_k||^2
+ 4|(\nabla_{(d\psi_k)^\#}u, u\Delta_g\psi_k)| +
2|(\Delta_{B} u, u|d\psi_k|^2)|.
\end{equation*}
We then estimate each term on the right hand side of the above
inequality.

We start with the term  $||\nabla_{(d\psi_k)^\#}u||^2$ and obtain
\begin{align*}
||\nabla_{(d\psi_k)^\#}u||^2 &= \int |\nabla_{(d\psi_k)^\#}u|^2d\mu
\leq \int |d\psi_k|^2|\nabla u|^2d\mu
\leq \int C\chi_k^6Q^{-1/2}|\nabla u|^2d\mu
= C(J^{k}_1[u])^2,
\end{align*}
where to get the second inequality we
used~(e4), and to get the first inequality we have used the fact that
\begin{equation*}
|\nabla_{(d\psi_k)^\#}u| = |tr(d\psi_k \otimes \nabla u)|
\leq |d\psi_k||\nabla u|.
\end{equation*}
The term $||u\Delta_{g}\psi_k||^2$ is estimated by
\begin{align*}
||u\Delta_g\psi_k||^2 = \int |u\Delta_g\psi_k|^2d\mu
\leq C||u||^2
=
C(J_0[u])^2,
\end{align*}
where we used~(e1) and (e4) to get the
inequality.

Using~(e4) together with Cauchy--Schwarz inequality,
the term $4|(\nabla_{(d\psi_k)^\#}u, u\Delta_g\psi_k)|$ is
estimated by
\begin{align*}
4|(\nabla_{(d\psi_k)^\#}u, u\Delta_g\psi_k)| &\leq
4\int |d\psi_k||\nabla u||u||\Delta_g\psi_k|d\mu
\leq
C\int\chi_k^3Q^{-1/4}|\nabla u||u|d\mu \\
&\leq
C\bigg{(} \int\chi_k^6Q^{-1/2}|\nabla u|^2d\mu\bigg{)}^{1/2}
\bigg{(} \int|u|^2d\mu\bigg{)}^{1/2}
= C(J_1^{(k)}[u])(J_0[u]),
\end{align*}
where the  second inequality is obtained by using the property $\chi_k^5\leq
\chi_k^3$.

Finally, the term $2|(\Delta_{B} u, u|d\psi_k|^2)|$ is estimated analogously to the preceding one:
\begin{align*}
2|(\Delta_{B} u, u|d\psi_k|^2)| &\leq C\int|u|Q^{-1/2}\chi_k^6|\Delta_{B} u|d\mu
 \leq C\bigg{(}\int |u|^2 d\mu\bigg{)}^{1/2}
\bigg{(}\int Q^{-1}\chi_k^8|\Delta_{B} u|^2 d\mu\bigg{)}^{1/2} \\
&= C(J_0[u])(J_2^{(k)}[u]),
\end{align*}
where in the second inequality we used the property $\chi_k^6\leq \chi_k^4$.
\end{proof}

\begin{lem}\label{L-estimate} Let $\psi_k$ be as in~(e4). Then,
for all $u \in Dom(L_{max})$ we have
\begin{equation*}
(L(\psi_ku), \psi_ku) \leq C\bigg{(}||Lu||||u|| + (J_0[u])^2 +
(J_1^{(k)}[u])^2 + J_0[u]J_1^{(k)}[u] + J_0[u]J_2^{(k)}[u]\bigg{)},
\end{equation*}
where $C>0$ is a constant independent of $k$ and $u$.
\end{lem}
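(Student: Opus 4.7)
The plan is to combine formula~(\ref{M-function_form_1}) with the estimate obtained in lemma~\ref{M-function_estimate}. Recall that for $u\in \Dom(L_{max})$ we have
\begin{equation*}
(L(\psi_ku), \psi_ku) = \RE(\psi_k^2Lu, u) + P_{\psi_k}[u],
\end{equation*}
and the quantity on the left is real (since $L$ is formally self-adjoint and $\psi_k u \in W^{4,2}_{comp}(E)$). Thus it suffices to bound $|\RE(\psi_k^2 Lu, u)|$ and $|P_{\psi_k}[u]|$ separately and then add the two bounds.

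First I would handle the term $\RE(\psi_k^2Lu,u)$. Since $\psi_k=\chi_k^4Q^{-1/2}$ with $0\leq \chi_k\leq 1$ and $Q\geq 1$, we have $0\leq \psi_k^2\leq 1$ pointwise. Applying the Cauchy--Schwarz inequality we therefore obtain
\begin{equation*}
|\RE(\psi_k^2Lu, u)|\leq \|\psi_k^2 Lu\|\,\|u\|\leq \|Lu\|\,\|u\|,
\end{equation*}
with a constant that is independent of $k$ and $u$.

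Next I would invoke lemma~\ref{M-function_estimate}, which directly yields
\begin{equation*}
|P_{\psi_k}[u]|\leq C\Bigl((J_0[u])^2 + J_0[u]J_1^{(k)}[u] + (J_1^{(k)}[u])^2 + J_0[u]J_2^{(k)}[u]\Bigr),
\end{equation*}
with $C>0$ independent of $k$ and $u$. Combining the two bounds via the triangle inequality in formula~(\ref{M-function_form_1}) delivers the claim of the lemma.

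There is no real obstacle here: the only point worth a moment's care is the uniform bound $\psi_k^2\leq 1$, which is what makes the first term collapse to $\|Lu\|\|u\|$ with a constant independent of $k$. Everything else is a direct application of the previous lemma.
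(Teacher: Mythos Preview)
Your proof is correct and follows exactly the same route as the paper's own argument: use formula~(\ref{M-function_form_1}), bound the first term by Cauchy--Schwarz together with $\psi_k^2\leq 1$, and bound $P_{\psi_k}[u]$ via lemma~\ref{M-function_estimate}. The extra remark that the left-hand side is real is fine but not needed, since in~(\ref{M-function_form_1}) real parts were already taken.
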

\begin{proof}
By \eqref{M-function_form_1} we have
\begin{equation*}
(L(\psi_ku), \psi_ku) = Re(\psi_k^2Lu, u) + P_{\psi_k}[u].
\end{equation*}
We then estimate the first term, on the right hand side of the
above equation, using Cauchy--Schwarz inequality and the fact that
$\psi_k^2 \leq 1$. The term $P_{\psi_k}[u]$ is estimated using
lemma \ref{M-function_estimate}.
\end{proof}

\begin{lem}\label{N-function_lower_bound} Let $\psi_k$ be as in~(e4). Then,
for all $u \in Dom(L_{max})$ we have
\begin{equation*}
(\Delta_{B}(\psi_ku), \Delta_{B}(\psi_ku)) \geq (\psi_k^2\Delta_{B} u, \Delta_{B} u)
- CJ_2^{(k)}[u]J_1^{(k)}[u] - CJ_1^{(k)}[u]J_0[u] - CJ_2^{(k)}[u]J_0[u],
\end{equation*}
where $C>0$ is a constant independent of $k$ and $u$.
\end{lem}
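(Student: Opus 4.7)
The plan is to reuse the product-rule expansion already carried out in the proof of Lemma~\ref{M-function_form_2} and then throw away the two non-negative ``square'' cross terms to obtain a one-sided inequality. Concretely, from the computation leading to~(\ref{ref-prod-rule-1}) we have
\begin{align*}
(\Delta_{B}(\psi_ku),\Delta_{B}(\psi_ku))
&= (\psi_k^{2}\Delta_{B} u,\Delta_{B} u)
+ 4\|\nabla_{(d\psi_k)^{\#}}u\|^{2}
+ \|u\Delta_{g}\psi_k\|^{2}\\
&\qquad - 4\operatorname{Re}(\psi_k\Delta_{B} u,\nabla_{(d\psi_k)^{\#}}u)
+ 2\operatorname{Re}(\psi_k\Delta_{B} u,u\Delta_{g}\psi_k)\\
&\qquad - 4\operatorname{Re}(\nabla_{(d\psi_k)^{\#}}u,u\Delta_{g}\psi_k),
\end{align*}
where I have used $(\psi_k\Delta_{B}u,\psi_k\Delta_{B}u)=(\psi_k^{2}\Delta_{B}u,\Delta_{B}u)$. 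Discarding the two non-negative norm-squared terms yields a lower bound; hence it suffices to absorb the three cross terms into the claimed error.

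Each cross term is handled by a straightforward Cauchy--Schwarz argument, using only the pointwise estimates $|d\psi_k|\leq C\chi_k^{3}Q^{-1/4}$ and $|\Delta_{g}\psi_k|\leq C\chi_k^{2}$ from~(e4), together with $\psi_k=\chi_k^{4}Q^{-1/2}\leq 1$ and the monotonicity $\chi_k^{a}\leq\chi_k^{b}$ whenever $a\geq b$ (since $0\leq\chi_k\leq 1$). For the first cross term, I would bound
\begin{equation*}
|(\psi_k\Delta_{B}u,\nabla_{(d\psi_k)^{\#}}u)|
\leq \int \chi_k^{4}Q^{-1/2}|\Delta_{B}u|\cdot C\chi_k^{3}Q^{-1/4}|\nabla u|\,d\mu
\leq C J_{2}^{(k)}[u]\,J_{1}^{(k)}[u].
\end{equation*}
For the second, $\psi_k|\Delta_{g}\psi_k|\leq C\chi_k^{6}Q^{-1/2}\leq C\chi_k^{4}Q^{-1/2}$, so Cauchy--Schwarz gives a bound by $CJ_{2}^{(k)}[u]\,J_{0}[u]$. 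For the third, $|d\psi_k||\Delta_{g}\psi_k|\leq C\chi_k^{5}Q^{-1/4}\leq C\chi_k^{3}Q^{-1/4}$, which yields a bound by $CJ_{1}^{(k)}[u]\,J_{0}[u]$.

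I do not foresee any real obstacle: the lemma is essentially a lower-bound twin of Lemma~\ref{M-function_estimate}, and the same ingredients (the identity from~\ref{ref-prod-rule-1}, the cut-off estimates (e4), and Cauchy--Schwarz) do the work. The only mild care-point is checking that the powers of $\chi_k$ and $Q$ appearing in each cross term really do fit inside the three functionals $J_{0}[u]$, $J_{1}^{(k)}[u]$, $J_{2}^{(k)}[u]$ after one application of $\chi_k\leq 1$ to reduce the exponent of $\chi_k$ appropriately; the computation above shows this works with room to spare.
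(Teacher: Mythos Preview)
Your proposal is correct and follows essentially the same route as the paper: both start from the expansion~(\ref{ref-prod-rule-1}), discard the two non-negative norm-squared terms, and then bound the three remaining cross terms by Cauchy--Schwarz together with the cut-off estimates from~(e4). The power-counting you sketch for $\chi_k$ and $Q$ matches the paper's estimates term for term.
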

\begin{proof}
Looking at~(\ref{ref-prod-rule-1}) we obtain the simple bound
\begin{align}
(\Delta_{B}(\psi_ku), \Delta_{B}(\psi_ku)) &\geq
(\psi_k\Delta_{B} u, \psi_k\Delta_{B} u)
-
4Re(\psi_k\Delta_{B} u, \nabla_{(d\psi_k)^\#}u) +
2Re(\psi_k\Delta_{B} u, u\Delta_g\psi_k) \nonumber \\
&\hspace{0.5cm} -
4Re(\nabla_{(d\psi_k)^\#}u, u\Delta_g\psi_k). \label{N-function_bound}
\end{align}
We can further estimate the last three terms, on the right hand side
of the above equation, using~(e4). We begin with
\begin{align*}
\vert (\psi_k\Delta_{B} u, \nabla_{(d\psi_k)^\#}u) \vert &\leq
C\int |\psi_k||\Delta_{B} u||d\psi_k||\nabla u|d\mu
\leq C\int Q^{-1/2}\chi_k^4|\Delta_{B} u|Q^{-1/4}\chi_k^3|\nabla u|d\mu \\
&\leq C\bigg{(}\int\chi_k^8Q^{-1}|\Delta_{B} u|^2 d\mu\bigg{)}^{1/2}
\bigg{(}\int \chi_k^6Q^{-1/2}|\nabla u|^2d\mu\bigg{)}^{1/2}
= CJ_2^{(k)}[u]J_1^{(k)}[u],
\end{align*}
where the third inequality follows by applying
Cauchy--Schwarz inequality.

We also have
\begin{align*}
|(\psi_k\Delta_{B} u, u\Delta_{g}\psi_k)| &\leq C\int |\psi_k\Delta_{B} u||u|d\mu
\leq C\bigg{(}\int\psi_k^2|\Delta_{B} u|^2d\mu \bigg{)}^{1/2}
\bigg{(}\int|u|^2 d\mu\bigg{)}^{1/2} \\
&= CJ_2^{(k)}[u]J_0[u],
\end{align*}
where in the first inequality we used $0\leq\chi_k\leq 1$.

Finally, we have
\begin{align*}
|(\nabla_{(d\psi_k)^\#}u, u\Delta_g\psi_k)| &\leq
C\int \chi_k^3Q^{-1/4}|\nabla u||u|d\mu
\leq CJ_1^{(k)}[u]J_0[u],
\end{align*}
where in the first inequality we used the property $\chi_k^5\leq \chi_k^3$. Substituting these estimates into \eqref{N-function_bound} gives the
result.
\end{proof}

In the next lemma, we will use the assumption $V\geq -Q$, where $Q$ is as in (e4).

\begin{lem}\label{J_2^k-estimate} For
for all $u \in Dom(L_{max})$ we have
\begin{equation*}
(J_2^{(k)})^2 \leq C(||Lu||||u|| + (J_0[u])^2 + (J_1^{(k)}[u])^2 +
2J_1^{(k)}[u]J_0[u] + 2J_0[u]J_2^{(k)}[u] +
J_1^{(k)}[u]J_2^{(k)}[u]),
\end{equation*}
where $C>0$ is a constant independent of $k$ and $u$.
\end{lem}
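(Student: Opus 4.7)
The plan is to observe first that the quantity we wish to estimate is already present inside the lower bound of Lemma \ref{N-function_lower_bound}. Indeed, since $\psi_k = \chi_k^4 Q^{-1/2}$, we have $\psi_k^2 = \chi_k^8 Q^{-1}$, so
\[
(\psi_k^2 \Delta_B u, \Delta_B u) = \int \chi_k^8 Q^{-1} |\Delta_B u|^2 \, d\mu = (J_2^{(k)}[u])^2.
\]
Thus Lemma \ref{N-function_lower_bound} rearranges to
\[
(J_2^{(k)}[u])^2 \leq (\Delta_B(\psi_k u), \Delta_B(\psi_k u)) + C J_2^{(k)}[u] J_1^{(k)}[u] + C J_1^{(k)}[u] J_0[u] + C J_2^{(k)}[u] J_0[u],
\]
and the task reduces to controlling $(\Delta_B(\psi_k u), \Delta_B(\psi_k u))$.

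Next I would write
\[
(\Delta_B(\psi_k u), \Delta_B(\psi_k u)) = (L(\psi_k u), \psi_k u) - (V \psi_k u, \psi_k u),
\]
which is just the definition of $L = \Delta_B^2 + V$ applied to the compactly supported section $\psi_k u$ (allowed by Remark \ref{chi-k-q-rem}). For the potential term, the hypothesis $V \geq -Q$ together with $\psi_k^2 Q = \chi_k^8 \leq 1$ yields
\[
-(V \psi_k u, \psi_k u) \leq (Q \psi_k u, \psi_k u) = \int \chi_k^8 |u|^2 \, d\mu \leq \|u\|^2 = (J_0[u])^2.
\]
Hence $(\Delta_B(\psi_k u), \Delta_B(\psi_k u)) \leq (L(\psi_k u), \psi_k u) + (J_0[u])^2$.

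Finally, I would invoke Lemma \ref{L-estimate} to bound $(L(\psi_k u), \psi_k u)$ by
\[
C\bigl(\|Lu\|\|u\| + (J_0[u])^2 + (J_1^{(k)}[u])^2 + J_0[u] J_1^{(k)}[u] + J_0[u] J_2^{(k)}[u]\bigr),
\]
and substitute back into the rearranged bound from Lemma \ref{N-function_lower_bound}. Collecting like terms and absorbing the extra $(J_0[u])^2$ into the constant $C$ gives exactly the stated inequality. No step is particularly subtle; the main point to get right is the cancellation that makes $(\psi_k^2 \Delta_B u, \Delta_B u)$ coincide with $(J_2^{(k)}[u])^2$, and the careful use of $V \geq -Q$ together with $\psi_k^2 Q \leq 1$ to turn a potentially dangerous potential contribution into a harmless $(J_0[u])^2$ term.
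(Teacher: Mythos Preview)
Your proposal is correct and follows essentially the same route as the paper: you rearrange Lemma~\ref{N-function_lower_bound} to isolate $(J_2^{(k)}[u])^2=(\psi_k^2\Delta_B u,\Delta_B u)$, use $V\geq -Q$ together with $Q\psi_k^2=\chi_k^8\leq 1$ to bound $(\Delta_B(\psi_k u),\Delta_B(\psi_k u))$ by $(L(\psi_k u),\psi_k u)+(J_0[u])^2$, and then apply Lemma~\ref{L-estimate}. The only difference is the order of exposition; the paper combines the same three ingredients in the same way.
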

\begin{proof}
Since $V+Q\geq 0$ we have
\[
(L(\psi_ku), \psi_ku) + (Q(\psi_ku), \psi_ku) \geq
(\Delta_B(\psi_ku), \Delta_B(\psi_ku)).
\]
Using the last inequality, lemma \ref{L-estimate}, and the
fact that $Q\psi_k^2=\chi^8_k \leq 1$, we have
\begin{align*}
(\Delta_{B}(\psi_ku), \Delta_{B}(\psi_ku)) &\leq (L(\psi_ku), \psi_ku) + \|u\|^2\nonumber\\
&\leq C\bigg{(}||Lu||||u|| + (J_0[u])^2 +
(J_1^{(k)}[u])^2 + J_0[u]J_1^{(k)}[u] + J_0[u]J_2^{(k)}[u]\bigg{)}.
\end{align*}
We also know, by lemma \ref{N-function_lower_bound}, that
\begin{equation*}
(\Delta_{B}(\psi_ku), \Delta_{B}(\psi_ku)) \geq
(\psi_k^2\Delta_{B} u, \Delta_{B} u)
- CJ_2^{(k)}[u]J_1^{(k)}[u] - CJ_1^{(k)}[u]J_0[u] - CJ_2^{(k)}[u]J_0[u],
\end{equation*}
from which it follows that
\begin{equation*}
(\psi_k^2\Delta_{B} u, \Delta_{B} u) \leq
C\bigg{(}||Lu||||u|| + (J_0[u])^2 +
(J_1^{(k)}[u])^2 + 2J_0[u]J_1^{(k)}[u] + 2J_0[u]J_2^{(k)}[u] +
J_1^{(k)}[u]J_2^{(k)}[u]\bigg{)}.
\end{equation*}
We then note that
\begin{equation*}
(\psi_k^2\Delta_{B} u, \Delta_{B} u) = \int\psi_k^2|\Delta_{B} u|^2d\mu =
\int Q^{-1}\chi_k^8|\Delta_{B} u|^2d\mu = (J_2^{(k)}[u])^2.
\end{equation*}
The result then follows.
\end{proof}

\begin{lem} For all $u \in Dom(L_{max})$ we have
\begin{equation*}
(J_1^{(k)}[u])^2 \leq J_2^{(k)}[u]J_0[u] + CJ_1^{(k)}[u]J_0[u],
\end{equation*}
where $C>0$ is a constant independent of $k$ and $u$.
\end{lem}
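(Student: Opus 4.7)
My plan is to apply Green's identity with the compactly supported test section $v := \phi u$, where $\phi := \chi_k^6 Q^{-1/2}$. Since $u \in W^{4,2}_{loc}(E)$ and $\phi$ has compact support (inherited from $\chi_k$), integration by parts gives $(\Delta_B u, \phi u) = (\nabla u, \nabla(\phi u))$. Expanding $\nabla(\phi u) = d\phi \otimes u + \phi \nabla u$ and noting that $\phi$ is real, so $(\nabla u, \phi \nabla u) = (J_1^{(k)}[u])^2$, I would rearrange and, since the left-hand side is real, take real parts to obtain
\[
(J_1^{(k)}[u])^2 \;=\; \Re(\phi \Delta_B u, u) \;-\; \Re(\nabla_{(d\phi)^\sharp} u, u),
\]
having used the local-coordinate identity $\langle \nabla u, d\phi \otimes u \rangle = \langle \nabla_{(d\phi)^\sharp} u, u \rangle$.

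The first term is then bounded using $\chi_k^6 \leq \chi_k^4$ (valid since $0 \leq \chi_k \leq 1$) and the Cauchy--Schwarz inequality, producing
\[
|\Re(\phi \Delta_B u, u)| \;\leq\; \int \chi_k^4 Q^{-1/2} |\Delta_B u|\,|u|\,d\mu \;\leq\; J_2^{(k)}[u]\,J_0[u],
\]
which furnishes the first summand in the conclusion. For the second term, the plan is to establish the pointwise estimate $|d\phi| \leq C\chi_k^3 Q^{-1/4}$; once in hand, Cauchy--Schwarz delivers $|\Re(\nabla_{(d\phi)^\sharp} u, u)| \leq \int |d\phi|\,|\nabla u|\,|u|\,d\mu \leq C\,J_1^{(k)}[u]\,J_0[u]$, and the lemma follows.

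The main obstacle is the derivative estimate on $\phi = \chi_k^2\psi_k$. Expanding yields
\[
d\phi \;=\; 2\chi_k\,(d\chi_k)\,\psi_k \;+\; \chi_k^2\, d\psi_k.
\]
The second piece is controlled directly by (e4): $|\chi_k^2 d\psi_k| \leq C\chi_k^5 Q^{-1/4} \leq C\chi_k^3 Q^{-1/4}$. For the first piece, I would solve for $\chi_k^3\,(d\chi_k)\,Q^{-1/2}$ from the identity
\[
d\psi_k \;=\; 4\chi_k^3\,(d\chi_k)\,Q^{-1/2} \;-\; \tfrac{1}{2}\chi_k^4 Q^{-3/2}\,dQ
\]
and combine with (e4) together with the growth bound on $|dQ|_g$ that is built into the construction of a cut-off system satisfying (e4), as indicated in remark~\ref{gb-e}. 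This delivers the pointwise estimate $|d\phi| \leq C\chi_k^3 Q^{-1/4}$ and closes the argument.
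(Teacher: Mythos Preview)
Your integration-by-parts scheme with the weight $\phi=\chi_k^6Q^{-1/2}$ is natural, but the derivative estimate $|d\phi|_g\le C\chi_k^3Q^{-1/4}$ cannot be obtained from (e1)--(e4) alone. The abstract hypotheses of section~\ref{key_ests} only control $d\psi_k$, not $d\chi_k$ or $dQ$ individually. Your proposed workaround---solving for $\chi_k^3(d\chi_k)Q^{-1/2}$ from the identity $d\psi_k=4\chi_k^3(d\chi_k)Q^{-1/2}-\tfrac12\chi_k^4Q^{-3/2}dQ$---leaves you needing a uniform bound on $\chi_k^4Q^{-3/2}|dQ|_g$, hence on $|dQ|_g$ itself. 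That bound (e.g.\ $|dQ|_g\le CQ^{5/4}$) is \emph{not} part of (e1)--(e4); it is an additional requirement on $Q$ that is used in section~\ref{proof_thm_1} to \emph{verify} (e4) in the concrete setting of theorem~\ref{main_thm_1}. Remark~\ref{gb-e} says only that such requirements suffice to produce a sequence satisfying (e1)--(e4), not that (e4) encodes them. So as a proof of the abstract lemma your argument has a gap.

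The paper's proof avoids this by first discarding the extra $\chi_k^2$ via the pointwise inequality $\chi_k^6Q^{-1/2}\le \chi_k^4Q^{-1/2}=\psi_k$, and then integrating by parts with the weight $\psi_k$ (using proposition~\ref{adjoint_product_formula}(i)). The resulting gradient term involves only $d\psi_k$, which is controlled directly by (e4). In other words: drop to $\psi_k$ \emph{before} differentiating, not after. Your approach would go through in the concrete settings of theorems~\ref{main_thm_1}--\ref{main_thm_2-s}, where the bound on $|dQ|_g$ (or on $|d\chi_k|_g$) is genuinely available, but not at the level of generality at which the lemma is stated.
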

\begin{proof}
Keeping in mind that $\psi_k=\chi_k^4Q^{-1/2}$, we can write
\begin{align*}
(J_1^{(k)}[u])^2 &= \int Q^{-1/2}\chi_k^6|\nabla u|^2d\mu
= \int\psi_k\chi_k^2|\nabla u|^2d\mu
\leq \int\psi_k|\nabla u|^2d\mu
= (\psi_k\nabla u, \nabla u) \\
&= (\nabla^{\dagger}(\psi_k\nabla u), u)
= (\psi_k\Delta_{B} u, u) - (\nabla_{(d\psi_k)^\#}u, u),
\end{align*}
where to get the fourth equality we have used integration by parts, and
to get the last equality we have used proposition \ref{adjoint_product_formula}(i).

We then estimate the terms on the right hand side of the above last
equality using Cauchy--Schwarz inequality:
\begin{align*}
|(\psi_k\Delta_{B} u, u)| &\leq \int \psi_k|\Delta_{B} u||u|d\mu
\leq \bigg{(}\int\psi_k^2|\Delta_{B} u|^2d\mu\bigg{)}^{1/2}
\bigg{(}\int |u|^2d\mu\bigg{)}^{1/2}
= J_2^{(k)}[u]J_0[u].
\end{align*}

Using~(e4) and Cauchy--Schwarz inequality we have
\begin{align*}
|(\nabla_{(d\psi_k)^\#}u, u)| &\leq C\int Q^{-1/4}\chi_k^3|\nabla u||u|d\mu
\leq C\bigg{(}\int Q^{-1/2}\chi_k^6|\nabla u|^2d\mu \bigg{)}^{1/2}
\bigg{(}\int |u|^2d\mu \bigg{)}^{1/2} \\
&\leq CJ_1^{(k)}[u]J_0[u].
\end{align*}
These two estimates then give the statement of the lemma.
\end{proof}

\begin{lem}\label{J_1^k-epsilon}
For all $\varepsilon> 0$, there exists a constant $G_{\varepsilon}>0$ (depending on $\varepsilon$ but independent of $k$) such that
\begin{equation*}
(J_1^{(k)}[u])^2 \leq \varepsilon (J_2^{(k)}[u])^2 +
G_{\varepsilon}(J_0[u])^2,
\end{equation*}
for all $u \in Dom(L_{max})$.
\end{lem}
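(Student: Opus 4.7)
The plan is to derive this estimate directly from the preceding lemma by two applications of Young's inequality, together with an absorption argument.

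First I would note that both $J_1^{(k)}[u]$ and $J_2^{(k)}[u]$ are finite real numbers: the factor $\chi_k^3Q^{-1/4}$ is bounded and has compact support, $u \in W^{4,2}_{loc}(E)$ by elliptic regularity, and likewise for $\chi_k^4 Q^{-1/2}\Delta_B u$. This is what lets us manipulate them freely without worrying about $\infty - \infty$ issues during the absorption step.

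Starting from the bound
\begin{equation*}
(J_1^{(k)}[u])^2 \leq J_2^{(k)}[u]\,J_0[u] + C\,J_1^{(k)}[u]\,J_0[u]
\end{equation*}
established in the previous lemma, I would apply Young's inequality $ab \le \delta a^2 + \tfrac{1}{4\delta}b^2$ to each term on the right-hand side. For the first term, I would take $a = J_2^{(k)}[u]$, $b = J_0[u]$, and a small parameter $\delta = \varepsilon$, obtaining
\begin{equation*}
J_2^{(k)}[u]\,J_0[u] \leq \varepsilon (J_2^{(k)}[u])^2 + \frac{1}{4\varepsilon}(J_0[u])^2.
\end{equation*}
For the second term, I would choose the splitting so that the resulting $(J_1^{(k)}[u])^2$-contribution can be absorbed into the left side, for instance
\begin{equation*}
C\,J_1^{(k)}[u]\,J_0[u] \leq \frac{1}{2}(J_1^{(k)}[u])^2 + \frac{C^2}{2}(J_0[u])^2.
\end{equation*}

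Inserting both estimates and moving $\tfrac{1}{2}(J_1^{(k)}[u])^2$ to the left gives
\begin{equation*}
\frac{1}{2}(J_1^{(k)}[u])^2 \leq \varepsilon (J_2^{(k)}[u])^2 + \Bigl(\tfrac{1}{4\varepsilon} + \tfrac{C^2}{2}\Bigr)(J_0[u])^2.
\end{equation*}
Multiplying by $2$ and relabeling $\varepsilon$ (i.e.\ replacing $2\varepsilon$ by $\varepsilon$) yields the desired inequality with $G_\varepsilon := \tfrac{1}{\varepsilon} + C^2$, which is independent of $k$ since the constant $C$ from the preceding lemma was. There is no genuine obstacle here — the only subtle point is to choose the split of the cross term $C\,J_1^{(k)}[u]J_0[u]$ so that the coefficient of $(J_1^{(k)}[u])^2$ on the right is strictly less than $1$, which is what makes the absorption step work.
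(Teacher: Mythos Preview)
Your proof is correct and follows essentially the same approach as the paper: start from the preceding lemma, apply Young's inequality to each cross term, and absorb the $(J_1^{(k)}[u])^2$ contribution into the left side. The only cosmetic difference is that the paper uses a single parameter $\alpha$ for both Young splittings and then solves $\alpha/(1-\alpha)=\varepsilon$, whereas you fix the second split at $1/2$ and relabel $\varepsilon$ at the end; your explicit remark on the finiteness of $J_1^{(k)}[u]$ justifying the absorption is a nice addition.
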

\begin{proof}
By the previous lemma and Young's inequality, we have
\begin{align*}
(J_1^{(k)}[u])^2 &\leq J_2^{(k)}[u]J_0[u] + CJ_1^{(k)}[u]J_0[u]
\leq
{\alpha}(J_2^{(k)}[u])^2 + \frac{1}{4\alpha}(J_0[u])^2 +
{\alpha}(J_1^{(k)}[u])^2 + \frac{C^2}{4\alpha}(J_0[u])^2
\end{align*}
where $\alpha > 0$.

We then have
\begin{equation*}
(1-\alpha)(J_1^{(k)}[u])^2 \leq\alpha(J_2^{(k)}[u])^2 +
\bigg{(}\frac{C^2}{4\alpha} + \frac{1}{4\alpha} \bigg{)}(J_0[u])^2
\end{equation*}
which implies
\begin{equation*}
(J_1^{(k)}[u])^2 \leq \frac{\alpha}{1-\alpha}(J_2^{(k)}[u])^2 +
\bigg{(}\frac{C^2 + 1}{4(1-\alpha)\alpha} \bigg{)}(J_0[u])^2.
\end{equation*}
Now, given $\varepsilon > 0$ let $\alpha := \frac{\varepsilon}{1+\varepsilon}$.
Substituting this value of $\alpha$ into the above equation, gives
\begin{equation}
(J_1^{(k)}[u])^2 \leq \varepsilon(J_2^{(k)}[u])^2 +
\bigg{(}\frac{(C^2 + 1)(1+\varepsilon)^2}{4\varepsilon} \bigg{)}(J_0[u])^2.
\end{equation}
Defining $G_{\varepsilon} := \frac{(C^2 + 1)(1+\varepsilon)^2}{4\varepsilon}$ gives
the result.
\end{proof}

We can now prove the finiteness of $J_2[u]$.

\begin{prop}\label{J_2_finite} Let $M$ be a Riemannian manifold with metric $g$. Let $Q\colon M \to[1,\infty)$ be a function belonging to $C^{\infty}(M)$. Assume that $M$ is equipped with a sequence of functions $\{\chi_k\}$ satisfying the properties (e1)--(e4). Let $(E, h)$ denote
a Hermitian vector bundle over $M$, with Hermitian metric $h$ and Hermitian connection $\nabla$. Let $V$ be a self-adjoint endomorphism of $E$ with $V \in \lloc^{\infty}(\End E)$. Furthermore, assume that $V(x)\geq -Q(x)$ for all $x\in M$. Then, for all $u \in Dom(L_{max})$ we have $J_2[u] < \infty$.
\end{prop}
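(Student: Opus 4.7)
The plan is to combine the localised inequalities established in Lemma~\ref{J_2^k-estimate} and Lemma~\ref{J_1^k-epsilon} to derive a bound on $(J_2^{(k)}[u])^2$ that is uniform in $k$, and then pass to the limit via Fatou's lemma using the pointwise convergence $\chi_k \to 1$ from property~(e3). Since $u \in \Dom(L_{max})$, both $\|u\|$ and $\|Lu\|$ are finite, so the desired uniform bound will depend only on these quantities (plus absolute constants).

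First I would take the inequality in Lemma~\ref{J_2^k-estimate} and apply Young's inequality in the form $ab \leq \tfrac{1}{4\eta}a^2 + \eta b^2$ to each of the mixed product terms $2J_0 J_2^{(k)}$ and $J_1^{(k)} J_2^{(k)}$, choosing parameters so that the total coefficient of $(J_2^{(k)})^2$ produced on the right-hand side is some small $\eta_1 > 0$. For the remaining mixed term $2J_1^{(k)} J_0$, I would use $2ab \leq a^2 + b^2$. After this step I obtain an inequality of the form
\begin{equation*}
(J_2^{(k)}[u])^2 \leq C_1\bigl(\|Lu\|\|u\| + (J_0[u])^2 + (J_1^{(k)}[u])^2\bigr) + \eta_1 (J_2^{(k)}[u])^2,
\end{equation*}
with $C_1$ independent of $k$ and $u$.

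Next I would invoke Lemma~\ref{J_1^k-epsilon} with a parameter $\varepsilon > 0$ to replace $(J_1^{(k)}[u])^2$ by $\varepsilon (J_2^{(k)}[u])^2 + G_\varepsilon (J_0[u])^2$. This yields
\begin{equation*}
(J_2^{(k)}[u])^2 \leq C_1\|Lu\|\|u\| + C_1(1 + G_\varepsilon)(J_0[u])^2 + (\eta_1 + C_1 \varepsilon)(J_2^{(k)}[u])^2.
\end{equation*}
Now I choose $\eta_1$ and $\varepsilon$ sufficiently small (for instance so that $\eta_1 + C_1\varepsilon \leq 1/2$) so that the last term can be absorbed into the left-hand side. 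The resulting inequality reads
\begin{equation*}
(J_2^{(k)}[u])^2 \leq 2 C_1 \|Lu\|\|u\| + 2C_1(1 + G_\varepsilon)(J_0[u])^2 =: \Lambda(u),
\end{equation*}
where $\Lambda(u) < \infty$ is independent of $k$.

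Finally, since $(J_2^{(k)}[u])^2 = \int_M \chi_k^8 Q^{-1}|\Delta_B u|^2 \, d\mu$ and $\chi_k^8 \to 1$ pointwise by (e3) while $\chi_k^8 \geq 0$, Fatou's lemma gives
\begin{equation*}
(J_2[u])^2 = \int_M Q^{-1}|\Delta_B u|^2 \, d\mu \leq \liminf_{k\to\infty} (J_2^{(k)}[u])^2 \leq \Lambda(u) < \infty,
\end{equation*}
proving $J_2[u] < \infty$. The main technical point is the absorption step: one has to verify that the constants emerging from Young's inequality and from Lemma~\ref{J_1^k-epsilon} can indeed be chosen simultaneously small, but since $C_1$ is fixed and $\varepsilon, \eta_1$ are free parameters, this is straightforward.
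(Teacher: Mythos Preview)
Your argument is correct and follows essentially the same route as the paper: start from Lemma~\ref{J_2^k-estimate}, use Young's inequality together with Lemma~\ref{J_1^k-epsilon} to absorb the $(J_2^{(k)})^2$ terms, obtain a $k$-independent bound, and finish with Fatou's lemma via (e3). One small expositional point: the constant $C_1$ you produce after the first round of Young's inequalities actually depends on $\eta_1$ (since splitting $J_1^{(k)}J_2^{(k)}$ with a small $\eta''$ on the $(J_2^{(k)})^2$ side forces a large $\tfrac{1}{4\eta''}$ coefficient on $(J_1^{(k)})^2$), so $\eta_1$ is not a free parameter once $C_1$ is ``fixed''; the clean way is to fix $\eta_1$ once (e.g.\ $\eta_1=1/4$), which determines $C_1$, and only then pick $\varepsilon\leq 1/(4C_1)$.
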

\begin{proof} Let $\varepsilon>0$ be arbitrary. Using Young's inequality we have
\begin{equation*}
2J_0[u]J_1^{(k)}[u]\leq (J_1^{(k)}[u])^2+(J_0[u])^2
\end{equation*}
\begin{equation*}
2J_0[u]J_2^{(k)}[u]\leq \varepsilon(J_2^{(k)}[u])^2+ \frac{1}{\varepsilon}(J_0[u])^2,
\end{equation*}

\begin{equation*}
J_1^{(k)}[u]J_2^{(k)}[u]\leq \sqrt{\varepsilon}(J_2^{(k)}[u])^2+ \frac{1}{4\sqrt{\varepsilon}}(J_1^{(k)}[u])^2\leq
\frac{5\sqrt{\varepsilon}}{4}(J_2^{(k)}[u])^2+ \frac{G_{\varepsilon}}{4\sqrt{\varepsilon}}(J_0[u])^2,
\end{equation*}
where in the last inequality we used lemma~\ref{J_1^k-epsilon}.

From the last three estimates and lemmas \ref{J_2^k-estimate} and \ref{J_1^k-epsilon}, we obtain
\begin{align*}
(J_2^{(k)}[u])^2 &\leq
C(||Lu||||u|| + (J_0[u])^2 + (J_1^{(k)}[u])^2 +
2J_1^{(k)}[u]J_0[u] + 2J_0[u]J_2^{(k)}[u] +
J_1^{(k)}[u]J_2^{(k)}[u]) \\
&\leq
C[||Lu||||u|| + (3\varepsilon+\frac{5\sqrt{\varepsilon}}{4})(J_2^{(k)}[u])^2 + (2+\frac{1}{\varepsilon}+2G_{\varepsilon} + \frac{G_{\varepsilon}}{4\sqrt{\varepsilon}})(J_0[u])^2],
\end{align*}
where $G_{\varepsilon}$ is as in lemma~\ref{J_1^k-epsilon}.

Making $\varepsilon$ sufficiently small so that, after rearranging, the coefficient of $(J_2^{(k)}[u])^2$ is positive and using the fact that $J_0[u] = ||u||$, we get
\begin{equation*}
(J_2^{(k)}[u])^2 \leq \tilde{C}(||Lu||||u|| + ||u||^2),
\end{equation*}
where $\tilde{C}>0$ is a constant independent of $k$ and $u$.

Taking $k \rightarrow \infty$ in the above inequality, appealing
to (e3), and Fatou's lemma gives the result.
\end{proof}

We can now prove the finiteness of $J_1[u]$.

\begin{prop}\label{J_1_finite} Assume that the hypotheses of proposition~\ref{J_2_finite} are satisfied. Then, for all $u \in Dom(L_{max})$ we have $J_1[u] < \infty$.
\end{prop}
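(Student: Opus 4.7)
The plan is to leverage the uniform-in-$k$ bound on $(J_2^{(k)}[u])^2$ that was already produced inside the proof of Proposition \ref{J_2_finite}, together with Lemma \ref{J_1^k-epsilon}, and then pass to the limit by Fatou's lemma exactly as in the previous proposition.

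More precisely, I would first fix any $\varepsilon>0$ (for instance $\varepsilon=1$) and recall Lemma \ref{J_1^k-epsilon}, which gives
\[
(J_1^{(k)}[u])^2 \leq \varepsilon (J_2^{(k)}[u])^2 + G_{\varepsilon}(J_0[u])^2,
\]
with $G_{\varepsilon}$ independent of $k$. Next, the proof of Proposition \ref{J_2_finite} produced a constant $\tilde{C}>0$, independent of $k$ and $u$, such that
\[
(J_2^{(k)}[u])^2 \leq \tilde{C}\bigl(\|Lu\|\,\|u\| + \|u\|^2\bigr).
\]
Combining these two inequalities, I obtain a constant $C'>0$, independent of $k$, for which
\[
(J_1^{(k)}[u])^2 \leq C'\bigl(\|Lu\|\,\|u\| + \|u\|^2\bigr).
\]

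Finally, I would let $k\to\infty$. By property (e3), $\chi_k\to 1$ pointwise, so $\chi_k^{6}Q^{-1/2}|\nabla u|^2 \to Q^{-1/2}|\nabla u|^2$ pointwise and monotonically (up to the obvious bound $0\le\chi_k\le 1$, monotonicity is not essential since Fatou suffices). Applying Fatou's lemma to the nonnegative integrands $\chi_k^{6}Q^{-1/2}|\nabla u|^2$ gives
\[
(J_1[u])^2 = \int Q^{-1/2}|\nabla u|^2 \, d\mu \leq \liminf_{k\to\infty} (J_1^{(k)}[u])^2 \leq C'\bigl(\|Lu\|\,\|u\| + \|u\|^2\bigr) < \infty,
\]
since $u\in \Dom(L_{max})$ ensures both $\|u\|$ and $\|Lu\|$ are finite.

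There is no real obstacle here: all the hard work went into Lemma \ref{J_1^k-epsilon} and Proposition \ref{J_2_finite}. The only point that needs a bit of care is that one must use the \emph{bound} on $(J_2^{(k)}[u])^2$ derived in the proof of Proposition \ref{J_2_finite} (which is already uniform in $k$) rather than only its limiting statement $J_2[u]<\infty$; this is what allows Fatou's lemma to be applied directly to $(J_1^{(k)}[u])^2$ without any further bootstrap argument.
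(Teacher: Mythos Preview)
Your proof is correct and follows essentially the same path as the paper's: apply Lemma~\ref{J_1^k-epsilon}, bound $(J_2^{(k)}[u])^2$ uniformly in $k$, then use (e3) and Fatou's lemma. The only minor difference is that the paper obtains the uniform bound more simply via the trivial inequality $(J_2^{(k)}[u])^2 \leq (J_2[u])^2$ (since $0\le\chi_k\le 1$), using only the \emph{conclusion} $J_2[u]<\infty$ of Proposition~\ref{J_2_finite}; so your closing remark that one ``must'' reach inside that proof for the explicit bound is unnecessary.
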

\begin{proof}
For all $\varepsilon>0$ we have
\begin{align*}
(J_1^{(k)}[u])^2 &\leq \varepsilon (J_2^{(k)}[u])^2 +
G_{\varepsilon}(J_0[u])^2
\leq \varepsilon (J_2[u])^2 + G_{\varepsilon}(J_0[u])^2,
\end{align*}
where the first inequality follows from lemma \ref{J_1^k-epsilon}, and for the second inequality we used the property $\chi_k^8\leq 1$.

The right hand side of the above second inequality is finite by
proposition \ref{J_2_finite}. To finish the proof, we let $k \rightarrow \infty$ and use (e3) together with Fatou's lemma.
\end{proof}

Before stating the next two propositions, we assume that in addition to $\{\chi_k\}$ as in section~\ref{cut-off_section}, $M$ is equipped with a sequence of cut-off functions $\{\xi_k\}$ satisfying (e1)--(e3) and the following property:
\begin{itemize}
\item[(e5)] there exists a sequence $p_k\in\mathbb{R}_{+}$ with $\displaystyle\lim_{k\to\infty}p_k=0$ such that
\begin{equation*}
|d\xi_k|_{g} \leq p_kQ^{-\rho/4}\text{ and } |\Delta_{g}\xi_k| \leq p_kQ^{-(\rho-1)/4},
\end{equation*}
where $\rho\geq 1$ and $Q$ is as in (e4).
\end{itemize}
\begin{rem}\label{prop-rho-3-1} In the next two propositions, we will use the condition (e5) with $\rho=3$ and $\rho=1$. Keeping in mind that $Q\geq 1$, we make the following observation: if (e5) is satisfied for $\rho=3$, then it is satisfied for $\rho=1$.
\end{rem}

\begin{prop}\label{prop-key-ests} Let $M$ be a Riemannian manifold with metric $g$. Let $Q\colon M \to[1,\infty)$ be a function belonging to $C^{\infty}(M)$. Assume that $M$ is equipped with a sequence of functions $\{\chi_k\}$ satisfying the properties (e1)--(e4) and a sequence of functions $\{\xi_k\}$ satisfying (e1)--(e3) and (e5) with $\rho=3$. Let $(E, h)$ denote
a Hermitian vector bundle over $M$, with Hermitian metric $h$ and Hermitian connection $\nabla$. Let $V$ be a
self-adjoint endomorphism of $E$ with $V \in \lloc^{\infty}(\End E)$. Furthermore, assume that $V(x)\geq -Q(x)$ for all $x\in M$. Then, $L$ is essentially self-adjoint on $C_{c}^{\infty}(E)$.
\end{prop}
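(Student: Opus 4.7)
The plan is to show that the maximal operator $L_{max}$ is symmetric on $\Dom(L_{max})$. Since $L_{max}=(L_{min})^{*}$ is closed, symmetry of $L_{max}$ gives $L_{max}\subseteq L_{max}^{*}=(L_{min})^{**}=\overline{L_{min}}$, and hence $L_{max}=\overline{L_{min}}$, i.e.\ $L|_{C_{c}^{\infty}(E)}$ is essentially self-adjoint. Thus the whole task reduces to verifying the identity $(Lu,v)=(u,Lv)$ for arbitrary $u,v\in\Dom(L_{max})$.

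To localize, I would use the cut-offs $\{\xi_k\}$: property (e2) applied to $\xi_k$ guarantees $\xi_k u,\xi_k v\in W^{4,2}_{comp}(E)$, while elliptic regularity gives $u,v\in W^{4,2}_{loc}(E)$. Approximating $\xi_k v$ by $C_{c}^{\infty}$-sections in $W^{4,2}$ and invoking the defining property of $L_{max}$ yield $(Lu,\xi_k v)=(u,L(\xi_k v))$ and similarly $(L(\xi_k u),v)=(\xi_k u,Lv)$. Since $V$ is self-adjoint and $\xi_k$ is real, the $V$-contributions cancel in the difference $(u,L(\xi_k v))-(L(\xi_k u),v)$, and iterating the identity $\Delta_{B}=\nabla^{\dagger}\nabla$ in integration by parts (legal because one factor in each pairing is compactly supported) reduces this difference to
\begin{equation*}
(\Delta_{B}u,\Delta_{B}(\xi_k v))-(\Delta_{B}(\xi_k u),\Delta_{B}v).
\end{equation*}
By dominated convergence via (e1) and (e3), $(Lu,\xi_k v)\to(Lu,v)$ and $(\xi_k u,Lv)\to(u,Lv)$, so the proof reduces to showing the displayed expression tends to $0$ as $k\to\infty$.

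The decisive algebraic step is the product rule (Proposition~\ref{adjoint_product_formula}(ii)), $\Delta_{B}(\xi_k w)=\xi_k\Delta_{B}w+A_k(w)$ with $A_k(w):=-2\nabla_{(d\xi_k)^{\#}}w+(\Delta_{g}\xi_k)w$. The $\xi_k\Delta_{B}u$ and $\xi_k\Delta_{B}v$ contributions cancel because $\xi_k$ is real, leaving the clean residual
\begin{equation*}
(\Delta_{B}u,A_k(v))-(A_k(u),\Delta_{B}v).
\end{equation*}
Each of the four resulting pairings is then controlled using (e5) with $\rho=3$, which supplies $|d\xi_k|_{g}\le p_k Q^{-3/4}$ and $|\Delta_{g}\xi_k|\le p_k Q^{-1/2}$. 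For instance, Cauchy--Schwarz yields $|(\Delta_{B}u,\nabla_{(d\xi_k)^{\#}}v)|\le p_k J_2[u]J_1[v]$ and $|(\Delta_{B}u,(\Delta_{g}\xi_k)v)|\le p_k J_2[u]J_0[v]$, with the other two pairings estimated analogously. The entire expression is therefore bounded by $p_k$ times a constant that depends on $u,v$ only through the functionals $J_0,J_1,J_2$.

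The main obstacle, already resolved earlier in the paper, is the finiteness of $J_1[w]$ and $J_2[w]$ for $w\in\Dom(L_{max})$, supplied by Propositions~\ref{J_1_finite} and~\ref{J_2_finite}; this is exactly what makes the reduction to the first-order data $|d\xi_k|$, $|\Delta_{g}\xi_k|$ (rather than to higher derivatives of $\xi_k$) worthwhile. Given that finiteness, the $O(p_k)$ bound tends to $0$ by (e5), verifying $(Lu,v)=(u,Lv)$ and hence the essential self-adjointness of $L$ on $C_{c}^{\infty}(E)$.
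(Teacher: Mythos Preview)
Your proposal is correct and follows essentially the same approach as the paper: both arguments show $L_{max}$ is symmetric by localizing with $\{\xi_k\}$, reducing via integration by parts and the product rule to the four residual pairings involving $\nabla_{(d\xi_k)^{\#}}$ and $\Delta_{g}\xi_k$, and then bounding these by $p_k$ times products of $J_0,J_1,J_2$ using (e5) with $\rho=3$ and the finiteness results of Propositions~\ref{J_2_finite} and~\ref{J_1_finite}. The only cosmetic difference is that you first pass through the identities $(Lu,\xi_k v)=(u,L(\xi_k v))$ and $(L(\xi_k u),v)=(\xi_k u,Lv)$ via $W^{4,2}$-approximation, whereas the paper computes $(\xi_k u,L_{max}v)-(L_{max}u,\xi_k v)$ directly; the resulting terms and estimates are identical.
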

\begin{proof} Keeping in mind the definition $L_{max}:=(L_{min})^*$, in order to establish the essential self-adjointness of $L_{min}$, it is enough to show (in view of an abstract fact) that $L_{\max}$ is a symmetric operator. In other words, we need to show that
\begin{equation}\label{symm-max}
(u, L_{max}v) = (L_{max}u, v),
\end{equation}
for all $u,v\in  Dom(L_{max})$.


Let $u, v \in Dom(L_{max})$ be arbitrary. Since $M$ is equipped with a sequence of functions $\{\chi_k\}$ satisfying (e1)--(e4), we can use  propositions~\ref{J_2_finite} and~\ref{J_1_finite} to conclude that $J_j[u]<\infty$ and $J_{j}[v]<\infty$ for $j=1,2$.

Furthermore, by elliptic regularity we know that
$u, v \in W^{4,2}_{loc}(E)$. Let $\{\xi_k\}$ be as in the hypothesis of this proposition. Using integration by parts (allowed since $\xi_ku\in  W^{4,2}_{comp}(E)$), we have
\begin{equation*}
(\xi_ku, \Delta_{B}^2v) = (\Delta_{B}(\xi_ku), \Delta_{B} v) =
((\Delta_g\xi_k)u, \Delta_{B} v) - 2(\nabla_{(d\xi_k)^\#}u, \Delta_{B} v)
+ (\xi_k\Delta_B u, \Delta_{B} v),
\end{equation*}
where we have used proposition \ref{adjoint_product_formula}(ii) to get the
second equality.

Analogously, we have
\begin{equation*}
(\Delta_{B}^2u, \xi_kv) = (\Delta_{B} u, \Delta_{B}(\xi_kv)) =
(\Delta_{B} u, (\Delta_g\xi_k)v) -2(\Delta_{B} u, \nabla_{(d\xi_k)^\#}v)
+ (\Delta_{B} u, \xi_k\Delta_{B} v).
\end{equation*}
Using the above two computations we then find
\begin{align}\label{main_thm_pf_eqn}
|(\xi_ku, L_{max}v) - (L_{max}u, \xi_kv)|
&\leq |((\Delta_g\xi_k)u, \Delta_{B} v) - (\Delta_{B} u, (\Delta_g\xi_k)v)| \nonumber\\
&\hspace{0.5cm}+ 2|(\nabla_{(d\xi_k)^\#}u, \Delta_{B} v) - (\Delta_{B} u,\nabla_{(d\xi_k)^\#}v)|,
\end{align}
which we will estimate term by term.

We start with
\begin{align*}
|(\nabla_{(d\xi_k)^\#}u, \Delta_{B} v)| &\leq
(|d\xi_k||\nabla u|, |\Delta_{B} v|)
\leq p_k(Q^{-3/4}|\nabla u|, |\Delta_{B} v|)
= p_k(Q^{-1/4}|\nabla u|, Q^{-1/2}|\Delta_{B} v|) \\
&\leq p_kJ_1[u]J_2[v],
\end{align*}
where to get the second inequality we used~(e5) with $\rho=3$,
and the last inequality follows from Cauchy--Schwarz inequality.

Similarly, we have
\begin{equation*}
|(\Delta_{B} u, \nabla_{(d\xi_k)^\#}v)| \leq p_kJ_1[v]J_2[u].
\end{equation*}

Next, using~(e5) with $\rho=3$ we obtain
\begin{align*}
|((\Delta_g\xi_k)u, \Delta_{B} v)| &\leq
(|\Delta_g\xi_k||u|, |\Delta_{B} v|)
\leq p_k
(Q^{-1/2}|u|, |\Delta_{B} v|)
= p_k(|u|, Q^{-1/2}|\Delta_{B} v|) \\
&\leq
p_kJ_0[u]J_2[v],
\end{align*}
where the last inequality follows from Cauchy--Schwarz inequality.

Similarly, we have
\begin{equation*}
|(\Delta_{B} u, (\Delta_g\xi_k)v)| \leq p_kJ_0[v]J_2[u].
\end{equation*}
Letting $k \rightarrow \infty$ in~\eqref{main_thm_pf_eqn} and using the dominated convergence theorem, the finiteness of $J_{j}[u]$ and $J_{j}[v]$ and the property $p_k\to 0$, we get
\begin{equation*}
|(u, L_{max}v) - (L_{max}u, v)| = 0,
\end{equation*}
which implies~(\ref{symm-max}). Hence, $L$ is essentially self-adjoint on on $C_c^{\infty}(E)$. \end{proof}

Then next proposition is concerned with lower semi-bounded operators.

\begin{prop}\label{prop-key-ests-lsb} Assume that the hypotheses of proposition~\ref{prop-key-ests} are satisfied with the following change: the sequence $\{\xi_k\}$ satisfies (e1)--(e3) and (e5) with $\rho=1$. In addition to $V\geq -Q$, assume that the condition~(\ref{E:semi-cond-1}) is satisfied. Then, $L$ is essentially self-adjoint on $C_{c}^{\infty}(E)$.
\end{prop}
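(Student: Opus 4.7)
The plan is to parallel the proof of proposition~\ref{prop-key-ests} up through the finiteness of the energy integrals $J_1[u]$ and $J_2[u]$, and then to replace the $L_{max}$-symmetry argument by an appeal to Brusentsev's abstract lemma for lower semi-bounded symmetric operators. That lemma asks only for a quadratic-form (rather than graph-norm) approximation of elements of $\Dom(L_{max})$ by compactly supported ones, and it is precisely this weaker demand that allows the weaker cut-off condition~(e5) with $\rho=1$ to suffice. Since $\{\chi_k\}$ still satisfies (e1)--(e4), propositions~\ref{J_2_finite} and~\ref{J_1_finite} apply without modification and yield $J_1[u]<\infty$ and $J_2[u]<\infty$ for every $u\in\Dom(L_{max})$.

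Next, for $u\in\Dom(L_{max})\subseteq W^{4,2}_{loc}(E)$, I would set $u_k:=\xi_k u\in W^{4,2}_{comp}(E)$; after a standard mollification one has $u_k\in\Dom(\overline{L_{min}})$, and $u_k\to u$ in $L^2$ by (e1)--(e3) and dominated convergence. The crucial task is
\begin{equation*}
(Lu_k,u_k)\longrightarrow(L_{max}u,u)\qquad\text{as }k\to\infty.
\end{equation*}
Applying the identity~(\ref{M-function_form_1}) with $\xi_k$ in place of $\psi_k$ (the derivation there used only $\psi_k u\in W^{4,2}_{comp}(E)$), this reduces to $P_{\xi_k}[u]\to 0$, with $P_{\xi_k}[u]$ given by lemma~\ref{M-function_form_2}. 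Using (e5) with $\rho=1$, i.e.\ $|d\xi_k|\leq p_kQ^{-1/4}$, $|\Delta_g\xi_k|\leq p_k$, and in particular $|d\xi_k|^2\leq p_k^2 Q^{-1/2}$, each of the four summands of $P_{\xi_k}[u]$ is controlled by $p_k^2$ times a product of two of $J_0[u]$, $J_1[u]$, $J_2[u]$; all of these are finite by the previous step, and $p_k\to 0$.

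The form convergence just obtained, combined with the lower semi-boundedness~(\ref{E:semi-cond-1}), matches the hypotheses of Brusentsev's abstract lemma; its conclusion is that the closure of $L|_{C_c^{\infty}(E)}$ is semimaximal in the sense of definition~\ref{semimax_def}, and for a symmetric semi-bounded operator this is equivalent to essential self-adjointness on $C_c^{\infty}(E)$.

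The main obstacle is exactly the reason for the switch in method: the symmetry argument of proposition~\ref{prop-key-ests} paired $|d\xi_k|$ against the unweighted $|\Delta_B v|$, whose norm is not known to be finite, so $|d\xi_k|$ had to absorb the full weight $Q^{-3/4}$. In the quadratic-form framework it is $|d\xi_k|^2$ that appears in the last summand of $P_{\xi_k}[u]$, so under (e5) with $\rho=1$ the weight $Q^{-1/2}$ available there pairs exactly with the finite quantity $J_2[u]=\|Q^{-1/2}\Delta_B u\|$. Carefully verifying that pairing, and checking that the approximation $u_k=\xi_k u$ (with its mollification) literally meets the technical hypothesis of Brusentsev's lemma, will be the only non-routine parts of the proof.
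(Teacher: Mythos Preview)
Your proposal is correct and follows essentially the same route as the paper: finiteness of $J_1[u]$ and $J_2[u]$ via (e1)--(e4), then semimaximality of $\overline{L_{min}}$ through $u_k=\xi_k u$, the identity~(\ref{M-function_form_1}), and $P_{\xi_k}[u]\to 0$ from (e5) with $\rho=1$, concluding by lemma~\ref{abstract_semimax_lem}. One small refinement: since $(Lu_k,u_k)$ is real, identity~(\ref{M-function_form_1}) actually gives $(Lu_k,u_k)\to\RE(L_{max}u,u)$, so you should explicitly restrict to $u$ with $\IM(L_{max}u,u)=0$ as in definition~\ref{semimax_def}, which is exactly how the paper proceeds.
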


Before proving the proposition, we recall the following definition, in which $A^*$ stands for the adjoint (in the operator theoretic sense) of the operator $A$ and $Im\, z$ stands for the imaginary part of $z\in\mathbb{C}$:
\begin{Def}\label{semimax_def}
Let $A$ be a symmetric operator in a Hilbert space $\mathscr{H}$ with inner product $(\cdot,\cdot)_{\mathscr{H}}$.
$A$ is called semimaximal if for all $u \in Dom(A^*)$ such that
$Im(A^*u, u)_{\mathscr{H}} = 0$, there exists a sequence $u_k \in Dom(A)$ such that
\begin{itemize}
\item[(i)] $u_k \rightarrow u$ in $\mathscr{H}$ and
\item[(ii)] $\displaystyle\lim_{k\rightarrow \infty}[(A^*u, u)_{\mathscr{H}} - (Au_k, u_k)_{\mathscr{H}} ]=0$.
\end{itemize}
\end{Def}

We will need the following abstract lemma about semimaximal operators;
see lemma 1 in \cite{brusentsev}.

\begin{lem}\label{abstract_semimax_lem}
Let $A$ be a symmetric operator in $\mathscr{H}$. Assume $A$ is semibounded
from below. Then the following are equivalent:
\begin{itemize}
\item[(i)] $A$ is essentially self-adjoint on $Dom(A)$,

\item[(ii)] $A$ is semimaximal.
\end{itemize}
\end{lem}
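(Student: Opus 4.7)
The plan is to prove the two directions of the equivalence separately. The forward implication (i) $\Rightarrow$ (ii) should be essentially automatic from the definition of the closure of a symmetric operator, while the reverse (ii) $\Rightarrow$ (i) is the substantive content and is where the semiboundedness hypothesis plays a decisive role.

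For (i) $\Rightarrow$ (ii), if $A$ is essentially self-adjoint then its closure $\bar{A}$ coincides with $A^*$, so for any $u\in Dom(A^*)=Dom(\bar{A})$ the definition of the closure provides a sequence $u_k\in Dom(A)$ with $u_k\to u$ in $\mathscr{H}$ and $Au_k\to A^*u$ in $\mathscr{H}$. Continuity of the inner product then yields $(Au_k,u_k)_{\mathscr{H}}\to (A^*u,u)_{\mathscr{H}}$, which is exactly conditions (i) and (ii) of definition~\ref{semimax_def}; this direction never even invokes the vanishing-imaginary-part hypothesis.

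For (ii) $\Rightarrow$ (i), fix a lower bound $m\in\mathbb{R}$ for $A$, so that $(Av,v)_{\mathscr{H}}\geq m\|v\|^{2}$ for all $v\in Dom(A)$. I would invoke the classical criterion that a symmetric operator semibounded from below by $m$ is essentially self-adjoint if and only if $\ker(A^{*}-\lambda I)=\{0\}$ for some real $\lambda<m$. To verify this criterion, pick any such $\lambda$ and any $u\in Dom(A^{*})$ with $A^{*}u=\lambda u$. Then $(A^{*}u,u)_{\mathscr{H}}=\lambda\|u\|^{2}\in\mathbb{R}$, so the imaginary part vanishes and semimaximality produces a sequence $u_k\in Dom(A)$ with $u_k\to u$ in $\mathscr{H}$ and $(Au_k,u_k)_{\mathscr{H}}\to\lambda\|u\|^{2}$. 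Combining with the lower bound $(Au_k,u_k)_{\mathscr{H}}\geq m\|u_k\|^{2}\to m\|u\|^{2}$ yields $\lambda\|u\|^{2}\geq m\|u\|^{2}$, and since $\lambda-m<0$ this forces $u=0$.

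The main point to keep clearly in mind is the correct formulation of the essential self-adjointness criterion used in the reverse direction. Unlike the general von Neumann criterion, which tests triviality of the deficiency subspaces at the nonreal points $\pm i$, the semibounded version only requires checking triviality of $\ker(A^{*}-\lambda I)$ at a single real $\lambda$ strictly below the lower bound $m$. This is precisely what lets semimaximality (which hands back a sequence converging to the \emph{real} value $(A^{*}u,u)_{\mathscr{H}}$) do the job cleanly; everything else reduces to passing limits through the inner product.
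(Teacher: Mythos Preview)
Your proof is correct. The paper does not supply its own argument for this lemma; it simply cites lemma~1 of~\cite{brusentsev}, so there is no proof in the paper to compare against. Your argument is the standard one: the forward direction is immediate from the definition of the closure, and for the reverse direction you correctly invoke the well-known criterion that a symmetric operator with lower bound $m$ is essentially self-adjoint if and only if $\ker(A^*-\lambda I)=\{0\}$ for some (equivalently, any) real $\lambda<m$, then use semimaximality together with the lower bound to force any deficiency vector to vanish.
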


The following property will also be useful.

\begin{lem}\label{lem-min-max}
Let $u \in Dom(L_{min}^*) = Dom(L_{max})$. Then for all
$\phi \in W^{4,\infty}_{comp}(M)$ we have
$\phi u \in Dom(\overline{L_{min}})$.
\end{lem}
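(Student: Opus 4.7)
The plan is to establish this by Friedrichs mollification, exploiting the compact support of $\phi u$. First, by elliptic regularity (noted earlier in the paper), $u \in W^{4,2}_{loc}(E)$. Combined with $\phi \in W^{4,\infty}_{comp}(M)$, the Leibniz rule gives $\phi u \in W^{4,2}_{comp}(E)$, with $\operatorname{supp}(\phi u) \subset K := \operatorname{supp}(\phi)$.

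The main step is to produce a sequence $v_n \in \ecomp$ with $v_n \to \phi u$ in $W^{4,2}(E)$. I would cover $K$ by finitely many open sets $\{U_j\}_{j=1}^N$, each contained in both a coordinate chart on $M$ and a trivializing chart for $E$, and pick a smooth partition of unity $\{\rho_j\}$ subordinate to a slight enlargement of this cover. Writing $\phi u = \sum_j \rho_j \phi u$, each summand transforms under its local trivialization into a compactly supported element of $W^{4,2}(\Omega_j, \mathbb{C}^r)$ with $\Omega_j \subset \mathbb{R}^n$ open. Standard Euclidean Friedrichs mollifiers produce smooth compactly supported approximations $w_n^{(j)} \to \rho_j \phi u$ in $W^{4,2}$; transferring these back via the trivializations and summing gives $v_n := \sum_j w_n^{(j)} \in \ecomp$ with $v_n \to \phi u$ in $W^{4,2}(E)$ and $\operatorname{supp}(v_n)$ contained in a fixed compact neighborhood $K'$ of $K$ for all $n$.

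It then remains to check $L v_n \to L(\phi u)$ in $L^2(E)$. Since $\Delta_B^2$ is a fourth-order differential operator with smooth coefficients, $W^{4,2}$-convergence yields $\Delta_B^2 v_n \to \Delta_B^2(\phi u)$ in $L^2$. For the potential term, $V \in \lloc^{\infty}(\End E)$ together with the containment of all supports in $K'$ gives
\begin{equation*}
\|V(v_n - \phi u)\| \leq \|V\|_{L^{\infty}(K')}\,\|v_n - \phi u\| \to 0.
\end{equation*}
Hence $L v_n \to L(\phi u)$ in $L^2$, and by definition of the closure we obtain $\phi u \in \Dom(\overline{L_{min}})$.

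The only delicate point is verifying that local Euclidean mollifications actually yield $W^{4,2}(E)$-convergence rather than merely componentwise Euclidean convergence. This is routine because, on each trivializing chart, the Bochner-type derivatives up to order four of a section are expressible as linear combinations of ordinary Euclidean derivatives with smooth coefficients (Christoffel symbols and connection one-form components); these coefficients are bounded on the compact neighborhood $K'$, so Euclidean $W^{4,2}$-convergence transfers automatically.
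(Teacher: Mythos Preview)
Your proof is correct and follows exactly the same approach as the paper: elliptic regularity gives $u\in W^{4,2}_{loc}(E)$, the Leibniz rule gives $\phi u\in W^{4,2}_{comp}(E)$, and Friedrichs mollifiers finish the argument. The paper's proof simply invokes Friedrichs mollifiers as a one-liner, whereas you have spelled out the chart-and-partition-of-unity details and the verification that $Lv_n\to L(\phi u)$ in $L^2$; this is useful elaboration but not a different method.
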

\begin{proof}
By elliptic regularity, we have that $u \in W^{4,2}_{loc}(E)$. As
$\phi \in W^{4,\infty}_{comp}(M)$, we find that
$\phi u \in W^{4,2}_{comp}(E)$. Hence, we can use
Friedrichs mollifiers to conclude that
$\phi u \in Dom(\overline{L_{min}})$.
\end{proof}
\begin{proof}[proof of proposition~\ref{prop-key-ests-lsb}]
Let $H := \overline{L_{min}}$ and observe that  $H^* = (\overline{L_{min}})^* = L_{min}^* = L_{max}$.
Let $u$ be an element of  $Dom(H^*)$ such that  $Im(H^*u, u) = 0$.
As $M$ is equipped with a sequence of functions $\{\chi_k\}$ satisfying (e1)--(e4), using propositions~\ref{J_2_finite} and~\ref{J_1_finite} we infer that $J_j[u]<\infty$ for $j=1,2$.

Let $\{\xi_k\}$ be as in the hypothesis of this proposition. Define $u_k:=\xi_k u$. Then, by lemma~\ref{lem-min-max} with $\phi=\xi_k$,
we have $u_k \in Dom(H)$.

Note that $u_k \rightarrow u$ in $L^2(E)$.
As $u_k \in Dom(H)$ we can use the same argument as in~(\ref{M-function_form_1}) to obtain
\begin{equation}\label{E:h-1-a}
(Hu_k,u_k)=(H(\xi_ku), \xi_ku) = Re(H(\xi_ku), \xi_ku) =
Re(\xi_k^2Lu, u) + P_{\xi_k}[u],
\end{equation}
where $P_{\xi_k}[u] = (\Delta_B(\xi_ku), \Delta_B(\xi_ku))
- Re(\Delta_Bu, \Delta_B(\xi_k^2u))$.

As $k \rightarrow \infty$, we have
$Re(\xi_k^2Lu, u) \rightarrow Re(Lu, u)$. Furthermore,
\begin{equation}\label{E:h-1-b}
Re(Lu, u) = Re(H^*u, u) = (H^*u, u),
\end{equation}
where the second equality follows from our assumption that
$Im(H^*u, u) = 0$.

Below we will show that $P_{\xi_k}[u] \rightarrow 0$ as
$k \rightarrow 0$. This, together with~(\ref{E:h-1-a}) and~(\ref{E:h-1-b}), leads to
\begin{equation*}
(Hu_k, u_k) - (H^*u, u) \rightarrow 0 \text{ as }
k\rightarrow \infty.
\end{equation*}
Therefore, $u_k$ satisfies the properties (i) and (ii) of definition~\ref{semimax_def}. Thus, $H$ is semimaximal. Next, observe that $H$ is semi-bounded from below and symmetric, as it is the closure of a semi-bounded from below and symmetric operator, namely
$L_{min}$. This means we can apply lemma \ref{abstract_semimax_lem} to infer
that $H$ is essentially self-adjoint, which means that $\overline{H}$ is self-adjoint. Noting that
$\overline{H} = \overline{\overline{L_{min}}} = \overline{L_{min}}$,
we see that $\overline{L_{min}}$ is self-adjoint. Therefore,
$L$, with domain $C_c^{\infty}(E)$, is essentially self-adjoint.

It remains to show that $P_{\xi_k}[u] \rightarrow 0$ as
$k \rightarrow 0$. In the same way as in lemma \ref{M-function_form_2}, with $\psi_k$ replaced by $\xi_k$, we obtain
\begin{equation*}
P_{\xi_k}[u] = 4||\nabla_{(d\xi_k)^\#}u||^2 + ||u\Delta_g\xi_k||^2
- 4Re(\nabla_{(d\xi_k)^\#}u, u\Delta_g\xi_k) +
2Re(|d\xi_k|_g^2u, \Delta_{B} u).
\end{equation*}

We estimate the first term as follows:
\begin{align*}
||\nabla_{(d\xi_k)^\#}u||^2 &\leq \int|d\xi_k|_g^2|\nabla u|^2\,d\mu
\leq p_k^2\int Q^{-1/2}|\nabla u|^2\,d\mu
= p_k^2(J_1[u])^2,
\end{align*}
where to get the second inequality we used~(e5) with $\rho=1$.

Next we estimate
\begin{align*}
||u\Delta_g\xi_k||^2 &= \int |u|^2|\Delta_g\xi_k|^2\,d\mu
\leq p_k^2\int |u|^2\,d\mu,
\end{align*}
where to bound the term $|\Delta_g\xi_k|^2$ we used~(e5) with $\rho=1$.

We now estimate
\begin{align*}
|(\nabla_{(d\xi_k)^\#}u, u\Delta_g\xi_k)| &\leq
\int |d\xi_k|_g|\nabla u||u||\Delta_g\xi_k|\,d\mu
\leq  p_k^2\int Q^{-1/4}|\nabla u||u|\,d\mu \\
&\leq  p_k^2\bigg{(}\int Q^{-1/2}|\nabla u|^2\,d\mu \bigg{)}^{1/2}
\bigg{(}\int |u|^2\,d\mu \bigg{)}^{1/2}
=  p_k^2J_1[u]J_0[u],
\end{align*}
where to get the second inequality we used~(e5) with $\rho=1$, and
the third inequality follows from Cauchy--Schwarz inequality.

Lastly, we estimate
\begin{align*}
|(|d\xi_k|_g^2u, \Delta_{B} u)| &\leq \int |u||d\xi_k|^2_g|\Delta_{B} u|\,d\mu
\leq  p_k^2\int |u|Q^{-1/2}|\Delta_{B} u|\,d\mu \\
&\leq  p_k^2\bigg{(}\int |u|^2\,d\mu \bigg{)}^{1/2}
\bigg{(}\int Q^{-1}|\Delta_{B} u|^2\,d\mu \bigg{)}^{1/2}
=  p_k^2J_0[u]J_2[u],
\end{align*}
where to bound the term $|d\xi_k|_g$ we used~(e5) with $\rho=1$, and
the third inequality follows from Cauchy--Schwarz inequality.

Letting $k\rightarrow 0$ in the above four estimates and remembering that $\displaystyle \lim_{k\to\infty} p_k=0$, it follows that
$P_{\xi_k}[u] \rightarrow 0$ as $k \rightarrow \infty$.
\end{proof}

\section{Proofs of theorems \ref{main_thm_1} and \ref{main_thm_2}}\label{proof_thm_1}
\subsection{Smooth cut-off functions}\label{smooth-cut-off}
Here we recall that if $(M, g)$  a complete Riemannian manifold with $Ric_{g}\geq -K_1$, where $K_1\geq 0$ is a constant, then there exists a sequence
of functions $\chi_k \in C^{\infty}_c(M)$, indexed by $k \in \NN$,
with the following properties:
\begin{itemize}
\item[1.] For all $x \in M$ and all $k \in \NN$, we have
$0 \leq \chi_k(x) \leq 1$.

\item[2.] There exists $\gamma > 1$ such that for all $k \in \NN$ we
have $\chi_k = 1$ on $B_k(x_0)$ and $supp \chi_k \subseteq
B_{\gamma k}(x_0)$.

\item[3.] $\sup_{x \in M}|d\chi_k(x)|_{g} \leq \frac{C}{k}$, where
$C > 0$ is a constant independent of $k$.

\item[4.] $\sup_{x \in M}|\Delta_{g}\chi_k(x)| \leq \frac{C}{k}$,
where
$C > 0$ is a constant independent of $k$.
\end{itemize}

Note that the sequence $\{\chi_k\}$ satisfies the properties (e1)--(e3) of section~\ref{cut-off_section}.

\begin{rem}\label{gb}
Under the assumptions stated above, the existence of a sequence $\{\chi_k\}$ with properties (1)--(4) was proved in theorem III.3(b) of~\cite{guneysu-17}. Under more general assumptions, informally described in remark~\ref{rem-cut-off} above, the existence of such functions was established subsequently in corollary 2.3 of \cite{bianchi}. In the literature, the functions $\chi_k\in C^{\infty}_c(M)$ with properties (1)--(4) are called  \emph{Laplacian cut-off functions}. For further references on the question of existence of a sequence of (weak) Laplacian/Hessian cut-off functions and a related problem concerning the existence of suitable exhaustion functions on a complete Riemannian manifold (with additional geometric assumptions), see~\cite{huang, irv-19, rv-19}.
\end{rem}

With the cut-off functions $\{\chi_k\}$  in place, we define
\begin{equation}\label{E:psi-k-def}
\psi_k:= Q^{-1/2}\chi_k^4,
\end{equation}
where $Q\colon M\to[1,\infty)$ is a function belonging to $C^{\infty}(M)$ and satisfying the hypotheses (i) and (ii) of theorem \ref{main_thm_1}.

\subsection{Additional estimates}\label{add-est}
The proof of theorem~\ref{main_thm_1} rests on the next two lemmas. In the first one, we show that $\{\chi_k\}$ satisfies (e4) of section~\ref{cut-off_section}.

\begin{lem}\label{l-s-1} Let $(M,g)$ be a complete Riemannian manifold with with $Ric_{g}\geq -K_1$, where $K_1\geq 0$ is a constant. Assume that $Q\colon M\to[1,\infty)$ belongs to $C^{\infty}(M)$ and satisfies the hypotheses (i) and (ii) of theorem~\ref{main_thm_1}. Let $\psi_k$ be as in~(\ref{E:psi-k-def}). Then,
\begin{equation*}
|d\psi_k|_{g} \leq CQ^{-1/4}\chi_k^3 \text{ and } |\Delta_{g}\psi_k| \leq C\chi_k^2,
\end{equation*}
where $C>0$ is a constant independent of $k$.
\end{lem}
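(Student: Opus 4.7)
The plan is to compute $d\psi_k$ and $\Delta_g\psi_k$ explicitly via the product and chain rules, then bound each resulting term using the growth hypotheses (i)--(ii) on $Q$ from theorem~\ref{main_thm_1} together with properties 3.--4. of the Laplacian cut-off functions. Throughout, the conditions $0\leq\chi_k\leq 1$ and $Q\geq 1$ will be used repeatedly to absorb higher powers of $\chi_k$ and $Q^{-1}$ into lower ones.

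For the first estimate, expanding $\psi_k = Q^{-1/2}\chi_k^4$ by the product rule gives
\[
d\psi_k = -\tfrac{1}{2}Q^{-3/2}\chi_k^4\,dQ \,+\, 4\,Q^{-1/2}\chi_k^3\,d\chi_k.
\]
Taking norms and using hypothesis (i), $|dQ|_g\leq C Q^{5/4}$, on the first term (which contributes $\tfrac{C}{2}Q^{-1/4}\chi_k^4$) and $|d\chi_k|_g\leq C/k\leq C$ on the second, then invoking $\chi_k^4\leq\chi_k^3$ and $Q^{-1/2}\leq Q^{-1/4}$, yields $|d\psi_k|_g\leq C Q^{-1/4}\chi_k^3$ as required.

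For the Laplacian estimate, the scalar product rule $\Delta_g(fh)=f\Delta_g h + h\Delta_g f - 2\langle df, dh\rangle_g$ (a consequence of the formulas collected in the appendix) splits $\Delta_g\psi_k$ into three pieces, which I would handle separately. Applying the chain rule $\Delta_g(F\circ Q)=F'(Q)\Delta_g Q - F''(Q)|dQ|_g^2$ with $F(t)=t^{-1/2}$ gives
\[
\Delta_g(Q^{-1/2}) \,=\, -\tfrac{1}{2}Q^{-3/2}\Delta_g Q \,-\, \tfrac{3}{4}Q^{-5/2}|dQ|_g^2,
\]
and hypotheses (i)--(ii) force both summands to be uniformly bounded, so $|\Delta_g(Q^{-1/2})|\leq C$. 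The analogous chain rule for $\chi_k^4$, combined with the Laplacian cut-off bounds, gives $|\Delta_g(\chi_k^4)|\leq C\chi_k^2$. Finally, the cross term $-2\langle d(Q^{-1/2}),d\chi_k^4\rangle_g$ expands to a multiple of $Q^{-3/2}\chi_k^3\langle dQ,d\chi_k\rangle_g$, which is bounded by $CQ^{-1/4}\chi_k^3/k\leq C\chi_k^2$. Assembling the three contributions and using $\chi_k^4\leq\chi_k^3\leq\chi_k^2$ and $Q^{-1/2}\leq 1$ delivers $|\Delta_g\psi_k|\leq C\chi_k^2$.

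The main obstacle is simply careful bookkeeping: one has to verify that the exponents $5/4$ and $3/2$ in the growth conditions on $|dQ|_g$ and $|\Delta_gQ|$ are precisely what is required for the negative powers of $Q$ arising from differentiating $Q^{-1/2}$ (namely $Q^{-3/2}$, $Q^{-5/2}$) to cancel and leave uniformly bounded expressions, while the four powers of $\chi_k$ in $\psi_k$ are enough to absorb one or two derivatives without losing the factors $\chi_k^3$ and $\chi_k^2$ needed on the right-hand sides. Once this matching of exponents is observed, the estimates follow by routine application of the triangle inequality.
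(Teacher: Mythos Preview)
Your proposal is correct and follows essentially the same approach as the paper's proof: both compute $d\psi_k$ and $\Delta_g\psi_k$ via the product and chain rules, then bound each term using hypotheses (i)--(ii) on $Q$ and the cut-off bounds on $\chi_k$. The only cosmetic difference is that the paper expands $\Delta_g\psi_k$ fully into five elementary terms before estimating, whereas you first group them as $\chi_k^4\Delta_g(Q^{-1/2})$, $Q^{-1/2}\Delta_g(\chi_k^4)$, and the cross term; the arithmetic is the same.
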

\begin{proof} To make the notations simpler, the symbol $|\cdot|$ refers to the norm with respect to the metric induced on $T^*M$ from $g$. We have
\begin{align*}
|d\psi_k| &= |d(Q^{-1/2}\chi_k^4)|
= |d(Q^{-1/2})\chi_k^4 + 4Q^{-1/2}\chi_k^3d\chi_k|
\leq \frac{1}{2}|Q^{-3/2}\chi_k^4dQ| + 4|Q^{-1/2}\chi_k^3d\chi_k| \\
&\leq CQ^{-1/4}\chi_k^4 + CQ^{-1/2}\chi_k^3
\leq CQ^{-1/4}\chi_k^3
\end{align*}
where to get the second inequality we have used the assumption 
$|dQ| \leq CQ^{5/4}$ and
the property $|d\chi_k| \leq \frac{C}{k}$.

Using proposition~\ref{Laplace_chain_rule}(i) and proposition~\ref{adjoint_product_formula}(ii), we have
\begin{align*}
\Delta_{g}\psi_k &= \Delta_{g}(Q^{-1/2}\chi_k^4)
=\chi_k^4\Delta_{g}(Q^{-1/2}) - 2\langle d\chi_k^4, dQ^{-1/2}\rangle +
Q^{-1/2}\Delta_{g}(\chi_k^4) \\
&= \chi_k^4\Delta_{g}(Q^{-1/2}) - 2\langle d\chi_k^4, dQ^{-1/2}\rangle +
4Q^{-1/2}\chi_k^3\Delta_g\chi_k - 12\chi_k^2Q^{-1/2}|d\chi_k|^2 \\
&=-\frac{3}{4}\chi_k^4Q^{-5/2}|dQ|^2 - \frac{1}{2}\chi_k^4Q^{-3/2}
\Delta_{g} Q - 2\langle d\chi_k^4, dQ^{-1/2}\rangle +
4Q^{-1/2}\chi_k^3\Delta_{g}\chi_k \\
&\hspace{0.5cm}- 12\chi_k^2Q^{-1/2}|d\chi_k|^2 \\
&=-\frac{3}{4}\chi_k^4Q^{-5/2}|dQ|^2 -
\frac{1}{2}\chi_k^4Q^{-3/2}\Delta_{g} Q
+4\chi_k^3Q^{-3/2}\langle d\chi_k, dQ\rangle +
4Q^{-1/2}\chi_k^3\Delta_{g}\chi_k \\
&\hspace{0.5cm}- 12\chi_k^2Q^{-1/2}|d\chi_k|^2.
\end{align*}
We then estimate
\begin{align*}
|\Delta_{g}\psi_k| &\leq \frac{3}{4}\chi_k^4Q^{-5/2}|dQ|^2 +
\frac{1}{2}\chi_k^4Q^{-3/2}|\Delta_g Q| +
 4\chi_k^3Q^{-3/2}|d\chi_k||dQ| + 4\chi_k^3Q^{-1/2}|\Delta_g\chi_k| \\
&\hspace{0.5cm} + 12\chi_k^2Q^{-1/2}|d\chi_k|^2 \\
 &\leq
 C\chi_k^4Q^{-5/2}Q^{5/2} + C\chi_k^4Q^{-3/2}Q^{3/2} +
 C\chi_k^3Q^{-3/2}Q^{5/4} + C\chi_k^3Q^{-1/2} + C\chi_k^2Q^{-1/2} \\
 &\leq C\chi_k^2,
\end{align*}
where we have used the bounds on $|dQ|$, $|\Delta_g Q|$,
$|d\chi_k|$, $|\Delta_g\chi_k|$, the fact that $\chi_k \leq 1$, and
that $Q \geq 1$.
\end{proof}

\begin{rem} \label{R: metric-tilde-g} Consider the metric $\tilde{g}:=Q^{-\rho/2}g$, where $\rho\geq 1$. If the metric $\tilde{g}$ is complete and $Ric_{\tilde{g}}$ is bonded from below, then there exists a sequence $\xi_k \in C^{\infty}_c(M)$ satisfying the properties (1)--(4) of section~\ref{smooth-cut-off}. In particular, for the metric $\tilde{g}$, the properties (3) and (4) read as follows: $|d\xi_k|_{\tilde{g}}\leq C/k$ and $|\Delta_{\tilde{g}}\xi_k|\leq C/k$.
\end{rem}

The next lemma provides the estimates for $|d\xi_k|_{{g}}$ and $|\Delta_{{g}}\xi_k|$.

\begin{lem}\label{l-s-2} Let $(M,g)$ be a Riemannian manifold. Let $Q\colon M\to[1,\infty)$ be a function in $C^{\infty}(M)$ satisfying the hypotheses (i) and (ii) of theorem~\ref{main_thm_1}. Let $\tilde{g} := Q^{-\rho/2}g$, where $\rho\geq 1$.  Assume that the metric $\tilde{g}$ is complete and $Ric_{\tilde{g}} \geq -K_2$ for some constant $K_2 \geq 0$.  Let $\{\xi_k\}$ be as in remark~\ref{R: metric-tilde-g}. Then
\begin{equation*}
|d\xi_k|_g \leq \frac{C}{k}Q^{-{\rho}/4}\textrm{ and }|\Delta_g\xi_k| \leq  \frac{C}{k}Q^{-(\rho-1)/4},
\end{equation*}
where $C>0$ is a constant independent of $k$.
\end{lem}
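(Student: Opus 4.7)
The plan is to transfer the $\tilde{g}$-bounds $|d\xi_k|_{\tilde g}\leq C/k$ and $|\Delta_{\tilde g}\xi_k|\leq C/k$ supplied by remark~\ref{R: metric-tilde-g} into the required $g$-bounds, using nothing more than the conformal identification $\tilde g = Q^{-\rho/2}g$. For the gradient estimate the argument is one line: writing $\tilde g = e^{2\phi}g$ with $e^{2\phi}=Q^{-\rho/2}$, the relation $\tilde g^{ij}=e^{-2\phi}g^{ij}$ gives $|\omega|_{\tilde g}=e^{-\phi}|\omega|_g$ for any one-form $\omega$, so
\[
|d\xi_k|_g = e^{\phi}|d\xi_k|_{\tilde g}= Q^{-\rho/4}|d\xi_k|_{\tilde g}\leq\frac{C}{k}\,Q^{-\rho/4},
\]
which is the first claimed inequality.

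For the Laplacian I would invoke the conformal transformation formula for the scalar Laplacian on functions. With $\phi=-(\rho/4)\log Q$, so that $d\phi=-(\rho/(4Q))\,dQ$, a direct local coordinate computation (or an appeal to the standard formula recorded in the paper's appendix) yields
\[
\Delta_{\tilde g}\xi_k = Q^{\rho/2}\Bigl(\Delta_g\xi_k - (n-2)\langle d\phi,d\xi_k\rangle_g\Bigr),
\]
where $\Delta_g=d^\dagger d$ is the non-negative Laplacian used throughout the paper. Solving for $\Delta_g\xi_k$ and applying the triangle inequality gives
\[
|\Delta_g\xi_k|\;\leq\; Q^{-\rho/2}|\Delta_{\tilde g}\xi_k|\;+\;\frac{|n-2|\rho}{4\,Q}\,|dQ|_g\,|d\xi_k|_g.
\]

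Now I would estimate each piece. The first term is bounded directly by $(C/k)Q^{-\rho/2}$. For the second, the hypothesis $|dQ|_g\leq CQ^{5/4}$ of theorem~\ref{main_thm_1} combined with the gradient estimate just established for $\xi_k$ gives a bound of order $Q^{-1}\cdot Q^{5/4}\cdot(C/k)Q^{-\rho/4}=(C/k)Q^{1/4-\rho/4}=(C/k)Q^{-(\rho-1)/4}$. Since $Q\geq 1$ and $-\rho/2\leq -(\rho-1)/4$ for every $\rho\geq 1$, the first term is dominated by the second, and the two combine to produce $|\Delta_g\xi_k|\leq (C/k)Q^{-(\rho-1)/4}$, as required.

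No step here is a real obstacle; the only substantive ingredient is picking the correct form of the conformal change formula for the scalar Laplacian on functions, and the rest is bookkeeping with exponents of $Q$. It is worth noting that the hypothesis $|\Delta_g Q|\leq CQ^{3/2}$ of theorem~\ref{main_thm_1} plays no role in this lemma---only the first-order bound $|dQ|_g\leq CQ^{5/4}$ is used---which is consistent with the fact that the calculation only involves first derivatives of $Q$ paired against $d\xi_k$.
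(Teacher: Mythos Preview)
Your proof is correct and follows essentially the same route as the paper: both arguments convert the $\tilde g$-bounds on $d\xi_k$ and $\Delta_{\tilde g}\xi_k$ into $g$-bounds via the conformal change formula for the scalar Laplacian, then feed in the hypothesis $|dQ|_g\leq CQ^{5/4}$ and compare exponents of $Q$. The paper packages the Laplacian step by citing its appendix corollary~\ref{Lapl_conf_trans_bound}, whereas you write out the formula $\Delta_{\tilde g}\xi_k = Q^{\rho/2}\bigl(\Delta_g\xi_k-(n-2)\langle d\phi,d\xi_k\rangle_g\bigr)$ directly, but the content is the same; your closing remark that hypothesis~(ii) of theorem~\ref{main_thm_1} is not needed here is also accurate.
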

\begin{proof}
By the definition of $\tilde{g}$ we have
\[
|d\xi_k|_g = Q^{-\rho/4}|d\xi_k|_{\tilde{g}}\leq \frac{C}{k}Q^{-\rho/4},
\]
where the inequality comes from the property $|d\xi_k|_{\tilde{g}}\leq C/k$.
Using corollary \ref{Lapl_conf_trans_bound} and the assumption $|d Q|_{g}\leq C Q^{5/4}$, we estimate the term $|\Delta_g\xi_k|$ as follows:
\begin{align}
|\Delta_g\xi_k| &\leq Q^{-\rho/2}\frac{C}{k} + \left|\frac{(n-2)\rho}{4}\right|
Q^{-(\rho+4)/4}|dQ|_{g}\frac{C}{k}
\leq Q^{-\rho/2}\frac{C}{k}+\left|\frac{(n-2)\rho}{4}\right|Q^{-(\rho+4)/4}Q^{5/4}\frac{C}{k}\nonumber\\
&\leq \frac{C}{k}Q^{-(\rho-1)/4}.
\end{align}
\end{proof}
\begin{proof}[proof of theorem \ref{main_thm_1}] In this section we assume that all hypotheses of theorem~\ref{main_thm_1} are satisfied. As discussed in section~\ref{smooth-cut-off}, since the metric $g$ is complete (see remark~\ref{comment-compl}) and $Ric_{g}\geq -K_1$, there exists a sequence $\{\chi_k\}$ in $C_{c}^{\infty}(M)$ satisfying the properties (e1)--(e3) of section~\ref{cut-off_section}. Additionally, by lemma~\ref{l-s-1} the sequence $\{\chi_k\}$ satisfies the property (e4). As discussed in remark~\ref{R: metric-tilde-g}, in view of the completeness of $\tilde{g}:=Q^{-3/2}g$ and the assumption $Ric_{\tilde{g}}\geq -K_2$, there exists a sequence $\{\xi_k\}$ in  $C_{c}^{\infty}(M)$ satisfying the properties (e1)--(e3). Furthermore, we can use lemma~\ref{l-s-2} with $\rho=3$ to infer that the sequence $\{\xi_k\}$ satisfies the property (e5) with $\rho=3$. Thus, all hypotheses of proposition~\ref{prop-key-ests} are satisfied. Therefore, $L$ is essentially self-adjoint on $C_{c}^{\infty}(E)$.
\end{proof}

\begin{proof}[proof of theorem \ref{main_thm_2}]
In this section we assume that all hypotheses of theorem~\ref{main_thm_2} are satisfied. The proof is the same as that of theorem~\ref{main_thm_1} with the following change: in view of the completeness of $\tilde{g}:=Q^{-1/2}g$, we now use remark~\ref{R: metric-tilde-g} and lemma~\ref{l-s-2} with $\rho=1$. In addition to the assumption $V(x)\geq -Q(x)$, we assume that~(\ref{E:semi-cond-1}) is satisfied. This enables us to use proposition~\ref{prop-key-ests-lsb} to infer that the operator $L$ is essentially self-adjoint on $C_{c}^{\infty}(E)$.
\end{proof}

\section{Proofs of corollaries~\ref{cor1_1} and~\ref{cor1_3}}\label{S:cor-1}

\begin{proof}[proof of corollary \ref{cor1_1}] By proposition \ref{Ric_bounded_conform_trans}, it follows that $Ric_{\tilde{g}}\geq-K$, where $K\geq 0$ is some constant.
Keeping in mind proposition~\ref{Laplace_chain_rule}(iii), note that $|dQ|_g \leq CQ^{1/4}$ and $|Hess_{g}(Q)|_g \leq CQ^{-1/2}$ ensure the fulfillment of assumptions (i) and (ii) of theorem~\ref{main_thm_1}. Thus, $L$ is essentially self-adjoint on $C_c^{\infty}(E)$.
\end{proof}

\begin{proof}[proof of corollary \ref{cor1_3}] By proposition \ref{Ric_bounded_conform_trans-2}, it follows that $Ric_{\tilde{g}}\geq-K$, where $K\geq 0$ is some constant. Keeping in mind proposition~\ref{Laplace_chain_rule}(iii), note that $|dQ|_g \leq CQ^{3/4}$ and $|Hess_{g}(Q)|_g \leq CQ^{1/2}$ ensure the fulfillment of assumptions (i) and (ii) of theorem~\ref{main_thm_2}. Thus, $L$ is essentially self-adjoint on $C_c^{\infty}(E)$.
\end{proof}

\section{Proofs of theorems \ref{main_thm_1-s} and \ref{main_thm_2-s}}\label{proof_thm_1-s}
\subsection{Cut-off functions (bounded sectional curvature)}\label{cut-off-bsc}
If $(M,g)$ is complete and $|Sec_{g}|_{g}$ is bounded, according to a theorem in~\cite{shi} there exists a smooth function $\beta : M \rightarrow [1, \infty)$ and a constant $\widehat{C}$ such that
\begin{equation}\label{beta-r}
r_{g}(x) + 1 \leq \beta(x) \leq \widehat{C} + r_{g}(x),\quad |d\beta|_g \leq \widehat{C}, \textrm{ and }|Hess_{g}(\beta)|_{g}\leq \widehat{C},
\end{equation}
for all $x\in M$, where $r_{g}$ is as in~(\ref{riem-dist}). In particular, we have
\begin{equation}\label{beta-r-1}
0\leq r_{g}(x)\leq \beta(x),
\end{equation}
and from proposition~\ref{Laplace_chain_rule}(iii), we see that there exists a constant $C$ such that
\begin{equation}\label{beta-r-2}
|d\beta|_g \leq C\textrm{ and }|\Delta_{g}\beta|\leq C,
\end{equation}
for all $x\in M$. 

Let $f\colon [0,\infty) \rightarrow [1, \infty)$ be a function satisfying the assumptions (i), (iii) and (iv) of theorem~\ref{main_thm_1-s} and the property
\begin{equation}\label{div-kappa}
\int_{0}^{\infty}f^{-\rho/4}(t)\,dt=\infty,
\end{equation}
where $\rho\geq 1$.
\begin{rem} In theorems \ref{main_thm_1-s} and \ref{main_thm_2-s}, we use the condition~(\ref{div-kappa}) for $\rho=3$ and $\rho=1$ respectively. Note that if~(\ref{div-kappa}) is satisfied for $\rho=3$, then it will be satisfied for $\rho=1$.
\end{rem}
With $\beta$ as in~(\ref{beta-r}) and $\rho$ as in~(\ref{div-kappa}), we define $P\colon M\to[0,\infty)$ as follows:
\begin{equation}\label{P-def-s}
P(x):=\int_{0}^{\beta(x)}f^{-\rho/4}(t)\,dt.
\end{equation}
Denoting by $s_{+}(x):=\max\{s(x),0\}$ the positive part of a function $s\colon M\to\mathbb{R}$, for $k\in\mathbb{N}$ we define
\begin{equation}\label{chi-k-def-s}
\chi_k(x):=\left(1-\frac{P(x)}{k}\right)_{+}.
\end{equation}
\begin{rem} To keep the notations simpler, the symbols $P$ and $\chi_k$ will not explicitly indicate the dependence of these functions on $\rho$. The same notational simplification applies to the functions  $\psi_k$ and $\xi_k$ introduced below.
\end{rem}
Note that the sequence $\{\chi_k\}$ satisfies the properties (e1) and (e3) of section~\ref{cut-off_section}.
Using the completeness of $(M,g)$, the inequality~(\ref{beta-r-1}), and the condition~(\ref{div-kappa}) with $\rho\geq 1$, we see that the functions $\chi_k$ are compactly supported. Moreover, since $f$ and $\beta$ are smooth, we see that (e2) is satisfied.

Define $Q\colon M \to [1,\infty)$ by the formula
\begin{equation}\label{Q-def-s}
Q(x):=(f\circ \beta)(x),
\end{equation}
and let
\begin{equation}\label{psi-k-def-s}
\psi_k(x):=(\chi_k(x))^4Q^{-1/2}(x).
\end{equation}

\subsection{More estimates (bounded sectional curvature)}\label{est-bsc}
We start this section with an observation about $Q$.
\begin{lem}\label{p-Q-f}
If $f\colon [0,\infty)\to [1,\infty)$ is smooth and satisfies the hypotheses (iii) and (iv) of theorem~\ref{main_thm_1-s}, then $Q$ satisfies  the hypotheses (i) and (ii) of theorem~\ref{main_thm_1}.
\end{lem}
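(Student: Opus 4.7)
The plan is to apply the chain rule for the differential and for the Laplacian to the composition $Q=f\circ\beta$, combine the growth bounds on $f'$, $f''$ from hypotheses (iii), (iv) of theorem~\ref{main_thm_1-s} with the uniform bounds on $\beta$ from~(\ref{beta-r-2}), and then absorb the resulting powers of $Q$ using $Q\ge 1$.

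First I would treat condition (i). By the ordinary chain rule, $dQ=f'(\beta)\,d\beta$, so
\begin{equation*}
|dQ|_g = |f'(\beta)|\,|d\beta|_g \leq C f^{5/4}(\beta)\cdot C = CQ^{5/4},
\end{equation*}
where the inequality uses hypothesis (iii) of theorem~\ref{main_thm_1-s} and the bound $|d\beta|_g\leq C$ from~(\ref{beta-r-2}), and the last equality uses the definition $Q=f\circ\beta$ from~(\ref{Q-def-s}). This establishes condition (i) of theorem~\ref{main_thm_1}.

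Next I would handle condition (ii) by invoking the chain rule for the Laplacian from proposition~\ref{Laplace_chain_rule}, which gives an expression of the form
\begin{equation*}
\Delta_g Q = f'(\beta)\,\Delta_g\beta - f''(\beta)\,|d\beta|_g^{2}.
\end{equation*}
Taking absolute values and plugging in hypotheses (iii), (iv) on $f$ together with the bounds $|d\beta|_g\leq C$ and $|\Delta_g\beta|\leq C$ from~(\ref{beta-r-2}), I get
\begin{equation*}
|\Delta_g Q| \leq |f'(\beta)|\,|\Delta_g\beta| + |f''(\beta)|\,|d\beta|_g^{2} \leq C f^{5/4}(\beta) + C f^{3/2}(\beta) = C Q^{5/4} + C Q^{3/2}.
\end{equation*}
Since $Q\geq 1$, we have $Q^{5/4}\leq Q^{3/2}$, so the right-hand side is bounded by $C Q^{3/2}$ (with a possibly larger constant), which is precisely condition (ii) of theorem~\ref{main_thm_1}.

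There is no real obstacle here: the lemma is essentially a bookkeeping exercise. The only subtlety is checking that the signs and the $|d\beta|_g^{2}$ term from the Laplacian chain rule do not require an additional hypothesis on $f''$ beyond what (iv) already provides, and that the final power $Q^{3/2}$ absorbs the weaker power $Q^{5/4}$ via $Q\geq 1$. Both points are immediate from the setup in section~\ref{cut-off-bsc}.
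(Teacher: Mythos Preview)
Your proof is correct and follows essentially the same approach as the paper: both apply the chain rule for $d$ and proposition~\ref{Laplace_chain_rule}(i) for $\Delta_g$ to the composition $Q=f\circ\beta$, then combine hypotheses (iii), (iv) on $f$ with the bounds on $|d\beta|_g$ and $|\Delta_g\beta|$ from~(\ref{beta-r-2}). Your version is slightly more explicit in spelling out the absorption $Q^{5/4}\leq Q^{3/2}$ via $Q\geq 1$, which the paper leaves implicit.
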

\begin{proof}
To prove that $Q$ satisfies the hypothesis (i) of theorem~\ref{main_thm_1},
note that $dQ = f'(\beta)d\beta$.  The result then follows by using the first estimate in~(\ref{beta-r-2}) together with the fact that $f$ satisfies the hypothesis (iii) of theorem~\ref{main_thm_1-s}.

To prove that $Q$ satisfies the hypothesis (ii) of theorem~\ref{main_thm_1}, we
use proposition~\ref{Laplace_chain_rule}(i) to obtain
\begin{equation*}
\Delta_gQ = -f''(\beta)|d\beta|_g^2 + f'(\beta)\Delta_g\beta.
\end{equation*}
The result then follows by using the estimates~(\ref{beta-r-2}) along with the assumption that $f$ satisfies the hypotheses (iii) and (iv)
of theorem~\ref{main_thm_1-s}.
\end{proof}

Now we list some properties of $P$.
\begin{lem}\label{P-k-suppl} Let $(M,g)$ be a complete Riemannian manifold with $|Sec_{g}|_{g}\leq K_3$, where $K_3$ is a constant. Assume that $f\colon [0,\infty)\to[1,\infty)$ satisfies the hypotheses (i), (iii) and (iv) of theorem~\ref{main_thm_1-s} and the condition~(\ref{div-kappa}) with $\rho\geq 1$. Let $P$ and $Q$ be as in~(\ref{P-def-s}) and ~(\ref{Q-def-s}). Then, for all $x\in M$ we have
\begin{itemize}
\item[(i)] $P(x)\geq f^{-\rho/4}(1)$,
\item[(ii)] $|dP(x)|_{g}\leq CQ^{-\rho/4}(x)$,
\item[(iii)] $|\Delta_{g}P(x)|\leq CQ^{-(\rho-1)/4}(x)$.
\end{itemize}
where $C>0$ is a constant.
\end{lem}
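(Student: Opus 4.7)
\smallskip

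\emph{Plan.} The three properties of $P$ are all obtained by direct calculation; the main subtlety is only in (iii), where one must carefully track exponents and use the assumption $\rho\geq 1$ to combine two disparate terms into a single clean bound.

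For (i), I would observe that the lower bound $\beta(x)\geq r_{g}(x)+1\geq 1$ from~\eqref{beta-r} allows us to restrict the integral defining $P$ to the interval $[0,1]$. Since $f$ is non-decreasing with $f\geq 1$ and $\rho/4>0$, the function $t\mapsto f^{-\rho/4}(t)$ is non-increasing on $[0,1]$, so
\begin{equation*}
P(x)\;\geq\;\int_{0}^{1} f^{-\rho/4}(t)\,dt\;\geq\;f^{-\rho/4}(1),
\end{equation*}
which is the claim.

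For (ii), writing $P=\phi\circ\beta$ with $\phi(s):=\int_{0}^{s}f^{-\rho/4}(t)\,dt$ and $\phi'(s)=f^{-\rho/4}(s)$, I get $dP=f^{-\rho/4}(\beta)\,d\beta$. Using the bound $|d\beta|_{g}\leq C$ from~\eqref{beta-r-2} and recalling that $Q=f\circ\beta$, this yields $|dP|_{g}\leq C f^{-\rho/4}(\beta)=CQ^{-\rho/4}$.

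For (iii), I would apply Proposition~\ref{Laplace_chain_rule}(i) to $P=\phi\circ\beta$ to obtain
\begin{equation*}
\Delta_{g}P \;=\; -\phi''(\beta)\,|d\beta|_{g}^{2}+\phi'(\beta)\,\Delta_{g}\beta
\;=\;\tfrac{\rho}{4}\,f^{-\rho/4-1}(\beta)\,f'(\beta)\,|d\beta|_{g}^{2}+f^{-\rho/4}(\beta)\,\Delta_{g}\beta.
\end{equation*}
The bounds $|d\beta|_{g}\leq C$ and $|\Delta_{g}\beta|\leq C$ from~\eqref{beta-r-2}, together with the assumption $|f'|\leq Cf^{5/4}$ from hypothesis (iii) of theorem~\ref{main_thm_1-s}, give
\begin{equation*}
|\Delta_{g}P|\;\leq\;C\,f^{-\rho/4-1+5/4}(\beta)+C\,f^{-\rho/4}(\beta)
\;=\;C\,f^{(1-\rho)/4}(\beta)+C\,f^{-\rho/4}(\beta).
\end{equation*}
Here the hypothesis $\rho\geq 1$ plays its essential role: since $f\geq 1$ and $(1-\rho)/4\geq -\rho/4$, the second term is dominated by the first, and both are bounded by $Cf^{(1-\rho)/4}(\beta)=CQ^{-(\rho-1)/4}$. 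This is the only place where any genuine care is required, and it is also what enforces the restriction to $\rho\geq 1$ that governs the range of applicability of the whole section.
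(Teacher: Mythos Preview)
Your proof is correct and mirrors the paper's argument; the only cosmetic difference is that in (iii) the paper expands $\Delta_{g}P=d^{\dagger}(f^{-\rho/4}(\beta)\,d\beta)$ via the product rule for $d^{\dagger}$ (proposition~\ref{adjoint_product_formula}(i)) rather than the chain rule of proposition~\ref{Laplace_chain_rule}(i), arriving at the same two-term formula and the same bound. One small correction to your commentary: the inequality $(1-\rho)/4\geq -\rho/4$ is equivalent to $1\geq 0$ and holds for every $\rho$, so the hypothesis $\rho\geq 1$ is \emph{not} actually used anywhere in this lemma---it is invoked only later (e.g.\ in lemma~\ref{psi-k-suppl} and in the $(e5)$ mechanism).
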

\begin{proof} To show part (i), we look at~(\ref{beta-r}) and observe that $\beta(x)\geq 1$, for all $x\in M$. Therefore,
\[
P(x)=\int_{0}^{\beta(x)}f^{-\rho/4}(t)\,dt\geq\int_{0}^{1}f^{-\rho/4}(t)\,dt\geq f^{-\rho/4}(1),
\]
where the second inequality follows since $f$ is non-decreasing.

Part (ii) follows from the fundamental theorem of calculus, the definition~(\ref{Q-def-s}), and the first estimate in \eqref{beta-r-2}:
\[
|dP(x)|_g = |f^{-\rho/4}(\beta(x))d\beta(x)|_g \leq CQ^{-\rho/4}.
\]

Finally, to prove part (iii), we apply (i) of proposition \ref{adjoint_product_formula}, estimates \eqref{beta-r-2}, and the hypothesis (iii) of theorem~\ref{main_thm_1-s}:
\begin{align*}
|\Delta_gP(x)| &= |d^{\dagger}dP(x)| \\
&= |d^{\dagger}(f^{-\rho/4}(\beta(x))d\beta(x))| \\
&= |f^{-\rho/4}(\beta(x))\Delta_g\beta(x) - \frac{\rho}{4}
\langle f^{-(\rho+4)/4}(\beta(x))f'(\beta(x))d\beta(x), d\beta(x)
\rangle| \\
&\leq CQ^{-(\rho-1)/4}(x),
\end{align*}
where $d^{\dagger}$ is the formal adjoint of $d$ and $\langle\cdot,\cdot\rangle$ is the (fiberwise) scalar product in $\Lambda^1T^*_{x}M$.

\end{proof}

\begin{lem}\label{psi-k-suppl} Assume that the hypotheses of lemma~\ref{P-k-suppl} are satisfied. Let $\psi_k$ be as in~(\ref{psi-k-def-s}). Then, we have
\begin{equation*}
|d\psi_k|_{g} \leq CQ^{-1/4}\chi_k^3 \text{ and } |\Delta_{g}\psi_k| \leq C\chi_k^2,
\end{equation*}
where $C>0$ is a constant independent of $k$.
\end{lem}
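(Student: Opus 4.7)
The proof will proceed in close parallel to lemma~\ref{l-s-1}, the only substantive change being that the bounds on $d\chi_k$ and $\Delta_g\chi_k$ are no longer the ``clean'' $C/k$ bounds of Laplacian cut-offs but the weighted bounds produced by the chain rule applied to $\chi_k = (1 - P/k)_+$ together with lemma~\ref{P-k-suppl}.

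First I would record, via lemma~\ref{p-Q-f}, that the function $Q = f\circ\beta$ satisfies $|dQ|_g \leq CQ^{5/4}$ and $|\Delta_g Q| \leq CQ^{3/2}$, matching exactly the hypotheses (i) and (ii) of theorem~\ref{main_thm_1} that drive lemma~\ref{l-s-1}. Next, on the open set $\{P<k\}$ (outside of which $\chi_k$ vanishes identically), the identity $\chi_k = 1 - P/k$ gives $d\chi_k = -k^{-1}dP$ and $\Delta_g\chi_k = -k^{-1}\Delta_g P$, so lemma~\ref{P-k-suppl} immediately yields the pointwise bounds
\begin{equation*}
|d\chi_k|_g \leq \frac{C}{k}Q^{-\rho/4}, \qquad |\Delta_g\chi_k| \leq \frac{C}{k}Q^{-(\rho-1)/4}
\end{equation*}
on the support of $\chi_k$, where $\rho\geq 1$ is fixed by the construction of $P$.

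With these bounds in hand, the algebraic manipulations for $d\psi_k$ and $\Delta_g\psi_k$ are identical to those in lemma~\ref{l-s-1}. For the first-order estimate I would expand $d\psi_k = -\tfrac{1}{2}Q^{-3/2}\chi_k^4\,dQ + 4Q^{-1/2}\chi_k^3\,d\chi_k$, insert $|dQ|_g \leq CQ^{5/4}$ in the first piece and the new $|d\chi_k|_g$ bound in the second, and collect powers of $Q$; the second piece picks up an extra factor $Q^{-\rho/4}$, which is $\leq 1$ since $Q\geq 1$ and $\rho\geq 1$, and the $1/k$ factor is likewise harmless. The resulting bound is $C Q^{-1/4}\chi_k^3$ after absorbing $\chi_k^4\leq\chi_k^3$. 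For the Laplacian, I would use proposition~\ref{Laplace_chain_rule}(i) and proposition~\ref{adjoint_product_formula}(ii) to reach precisely the five-term identity displayed in lemma~\ref{l-s-1}, then estimate term by term. The two terms that depend on derivatives of $\chi_k$ now contribute $\chi_k^3 Q^{-(\rho+1)/4}$ and $\chi_k^3 Q^{-(\rho-1)/4 - 1/2}$ type expressions respectively, together with $|dQ|$ and $|\Delta_g Q|$ bounds; in every case the exponent of $Q$ is $\leq 0$ for $\rho\geq 1$, so each term is dominated by $C\chi_k^2$ after using $\chi_k\leq 1$ and $1/k\leq 1$.

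I do not anticipate a real obstacle beyond careful bookkeeping; the only structural difference from lemma~\ref{l-s-1} is cosmetic, namely that the new $\chi_k$-derivative bounds carry additional nonpositive powers of $Q$, which cannot worsen the estimate because $Q\geq 1$. The one point worth mentioning explicitly in the write-up is that $\chi_k$ is smooth on $\{P<k\}$ and identically zero on $\{P\geq k\}$, so the pointwise estimates above hold wherever they are needed, and the regularity required for (e2) (and hence for the use of $\psi_k u$ as a test section in section~\ref{key_ests}) is unaffected.
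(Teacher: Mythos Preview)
Your proposal is correct and follows essentially the same approach as the paper: the paper's proof simply says to repeat lemma~\ref{l-s-1} verbatim after replacing the cut-off derivative bounds by $|d\chi_k|_g \leq CQ^{-\rho/4}$ and $|\Delta_g\chi_k| \leq CQ^{-(\rho-1)/4}$ (via lemma~\ref{P-k-suppl}) and invoking lemma~\ref{p-Q-f} for the bounds on $Q$, which is precisely what you outline. Your additional remark that the extra nonpositive powers of $Q$ can only help since $Q\geq 1$ and $\rho\geq 1$ makes explicit what the paper leaves implicit in the phrase ``remembering that $\rho\geq 1$''.
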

\begin{proof} Keeping in mind lemma~\ref{p-Q-f} and remembering that $\rho\geq 1$, the proof is carried out in the same way as that of lemma~\ref{l-s-1} with the following change: instead of estimates (3) and (4) of section~\ref{smooth-cut-off}, we use the estimates
\begin{equation*}
|d\chi_k|_{g} \leq |dP|_{g}\leq CQ^{-\rho/4}\text{ and } |\Delta_{g}\chi_k| \leq |\Delta_{g}P(x)|\leq CQ^{-(\rho-1)/4},
\end{equation*}
which follow from the definition of $\chi_k$ in~(\ref{chi-k-def-s}) and lemma~\ref{P-k-suppl}.
\end{proof}
For $k\in\NN$, $k\geq 2$, and $x\in M$ define
\begin{equation}\label{xi-k-def-s}
\xi_k(x):=\left(1-\left(\frac{P(x)}{k}\right)^{1/\sqrt{\ln k}}\right)^4_{+}.
\end{equation}
For the same reasons as in the case of $\{\chi_k\}$, the sequence $\{\xi_k\}$ satisfies (e1)--(e3). The next lemma shows that the sequence $\{\xi_k\}$ satisfies (e5).

\begin{lem}\label{xi-k-suppl} Assume that the hypotheses of lemma~\ref{P-k-suppl} are satisfied. Let $\xi_k$ be as in~(\ref{xi-k-def-s}). Then,
\begin{equation*}
|d \xi_k|_{g} \leq \frac{C}{\sqrt{\ln k}}Q^{-\rho/4}\text{ and } |\Delta_{g}\xi_k| \leq \frac{C}{\sqrt{\ln k}}Q^{-(\rho-1)/4}
\end{equation*}
where $C>0$ is a constant independent of $k$.
\end{lem}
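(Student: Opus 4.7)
The plan is to exploit the chain-rule structure $\xi_k = F(h_k)$, where $F(t) := (1-t)_+^4$ and $h_k := (P/k)^{a_k}$ with $a_k := 1/\sqrt{\ln k}$. The key point is that differentiating $h_k$ with respect to $P$ brings out a factor of $a_k$, and this is exactly what produces the $1/\sqrt{\ln k}$ decay claimed in the lemma. First I would write $dh_k = (a_k h_k / P)\, dP$ and, via proposition~\ref{Laplace_chain_rule}(i) applied to $h_k = G(P)$ with $G(s) = (s/k)^{a_k}$, compute
\begin{equation*}
\Delta_g h_k = \frac{a_k h_k}{P}\,\Delta_g P - \frac{a_k(a_k-1)\, h_k}{P^2}\, |dP|_g^2,
\end{equation*}
together with the elementary identities
\begin{equation*}
d\xi_k = -4(1-h_k)_+^3\, dh_k, \qquad \Delta_g \xi_k = -4(1-h_k)_+^3\, \Delta_g h_k - 12(1-h_k)_+^2\, |dh_k|_g^2,
\end{equation*}
the latter again from proposition~\ref{Laplace_chain_rule}(i) (note that $F$ is $C^3$ across $t=1$, which is enough for these pointwise a.e.\ computations, and everything vanishes to sufficient order where $P/k = 1$).

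Next I would observe that on $\operatorname{supp} \xi_k$ we have $0 \leq h_k \leq 1$; moreover, by lemma~\ref{P-k-suppl}(i) the lower bound $P \geq f^{-\rho/4}(1) =: c_0 > 0$ holds \emph{everywhere} on $M$, so $1/P$ is uniformly bounded. Combining $h_k \leq 1$, $1/P \leq 1/c_0$, and $|dP|_g \leq CQ^{-\rho/4}$ from lemma~\ref{P-k-suppl}(ii) gives
\begin{equation*}
|dh_k|_g \leq \frac{C a_k}{c_0}\, Q^{-\rho/4},
\end{equation*}
and since $|F'(h_k)| = 4(1-h_k)_+^3 \leq 4$, the first estimate of the lemma follows with constant proportional to $a_k = 1/\sqrt{\ln k}$. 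For the Laplacian, plugging the bounds of lemma~\ref{P-k-suppl}(ii),(iii) into the expression for $\Delta_g h_k$ yields $|\Delta_g h_k| \leq C a_k\, Q^{-(\rho-1)/4} + C a_k\, Q^{-\rho/2}$, where I have also used $|a_k - 1| \leq 1$ for $k$ sufficiently large. Because $Q \geq 1$ and $\rho \geq 1$ force $-\rho/2 \leq -(\rho-1)/4$, both terms are controlled by $C a_k\, Q^{-(\rho-1)/4}$. Similarly $|dh_k|_g^2 \leq C a_k^2\, Q^{-\rho/2} \leq C a_k\, Q^{-(\rho-1)/4}$ once $\ln k \geq 1$ (so $a_k^2 \leq a_k$). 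Assembling these bounds in the formula for $\Delta_g \xi_k$ and using $(1-h_k)_+ \leq 1$ yields $|\Delta_g \xi_k| \leq (C/\sqrt{\ln k})\, Q^{-(\rho-1)/4}$, as required.

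The main obstacle is really bookkeeping rather than a conceptual one: one must carefully track that each differentiation of $h_k$ produces exactly one factor of $a_k$ (so two differentiations give $a_k^2$, which is even smaller than the target $a_k$), while the quantities $|dP|_g$, $|\Delta_g P|$ and $1/P$ contribute no large factors thanks to lemma~\ref{P-k-suppl}. The choice of the exponent $1/\sqrt{\ln k}$ in the definition of $\xi_k$ is precisely calibrated so that (a) $a_k \to 0$ as $k \to \infty$, giving property~(e5), while (b) $a_k$ does not decay so fast that the support of $\xi_k$, determined by $(P/k)^{a_k} \leq 1$, fails to exhaust $M$; this latter exhaustion property, needed for (e1)--(e3), relies on the completeness of $(M,g)$, the inequality~(\ref{beta-r-1}), and~(\ref{div-kappa}) with $\rho \geq 1$, just as for $\{\chi_k\}$ in section~\ref{cut-off-bsc}.
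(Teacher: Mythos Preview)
Your proof is correct and follows essentially the same approach as the paper, namely the chain rule together with the bounds of lemma~\ref{P-k-suppl}; the paper's own proof is a two-line sketch referring to these same ingredients, and you have simply written out the intermediate computations (factoring through $h_k=(P/k)^{a_k}$) in full. Your explicit tracking of the factor $a_k=1/\sqrt{\ln k}$ produced by each differentiation of $h_k$, and of the uniform lower bound $P\geq f^{-\rho/4}(1)$ from lemma~\ref{P-k-suppl}(i), is exactly what the paper leaves implicit.
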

\begin{proof}
The estimate for $|d \xi_k|_{g}$ follows from the
chain rule and (ii) of lemma \ref{P-k-suppl}.

The estimate for $|\Delta_{g}\xi_k|$ follows by applying 
proposition \ref{Laplace_chain_rule}(i) together with the properties (ii) and (iii) of lemma
\ref{P-k-suppl}.
\end{proof}


\begin{proof}[proof of theorem \ref{main_thm_1-s}] Here we assume that all hypotheses of theorem~\ref{main_thm_1-s} are satisfied. In particular, the hypothesis (ii) is the same as the condition~(\ref{div-kappa}) with $\rho=3$. As indicated in section~\ref{cut-off-bsc} and in lemma~\ref{psi-k-suppl}, the sequence $\{\chi_k\}$ in~(\ref{chi-k-def-s}) satisfies the properties (e1)--(e4) of section~\ref{cut-off_section}. Furthermore, the sequence $\{\xi_k\}$ in~(\ref{xi-k-def-s}) satisfies the properties (e1)--(e3) and, by lemma~\ref{xi-k-suppl} with $\rho=3$, the same sequence satisfies (e5) with $\rho=3$. Remembering~(\ref{beta-r-1}) and the non-decreasing property of $f$, the assumption
$V(x)\geq -(f\circ r_{g})(x)$ implies $V(x) \geq -(f\circ \beta)(x)=-Q(x)$, where the last equality follows from~(\ref{Q-def-s}). Thus, the hypotheses of proposition~\ref{prop-key-ests} are satisfied. Therefore, $L$ is essentially self-adjoint on $C_{c}^{\infty}(E)$.
\end{proof}

\begin{proof}[proof of theorem \ref{main_thm_2-s}] Here we assume that all hypotheses of theorem~\ref{main_thm_2-s} are satisfied. In particular, the hypothesis (ii) is the same as the condition~(\ref{div-kappa}) with $\rho=1$. The proof of theorem \ref{main_thm_2-s} is the same as that of theorem~\ref{main_thm_1-s} with the following change: we use lemma~\ref{xi-k-suppl} with $\rho=1$ to infer that the sequence $\{\xi_k\}$ satisfies~(e5) with $\rho=1$. In addition to the assumption $V(x)\geq -(f\circ r_{g})(x)$, we use the lower semi-boundedness condition~(\ref{E:semi-cond-1}). Thus, the hypotheses of proposition~\ref{prop-key-ests-lsb} are satisfied. Hence, $L$ is essentially self-adjoint on $C_{c}^{\infty}(E)$.
\end{proof}

\begin{appendices} \section{Appendix}

In this section we gather together
various results pertaining to connections and conformal changes of metrics.

The first two formulas of the following proposition describe the chain rules for the Laplacian/Hessian.
For the first formula, see exercises 3.4 and 3.9 in \cite{grigoryan}. We remind the reader that in our paper the Laplace-Beltrami operator $\Delta_{g}$ is non-negative, which explains the sign difference with the corresponding formulas of~\cite{grigoryan}. The second formula in proposition~\ref{Laplace_chain_rule} is obtained by combining the formula for the differential of a composition (exercise 3.4 of \cite{grigoryan}) with the definition of Hessian $Hess_{g}(f):=\nabla^{lc,g}df$. (Here, $\nabla^{lc,g}$ is the covariant derivative on $T^*M$ induced from the Levi--Civita connection on $(M,g)$ and $d$ is the standard differential.) For the third formula of proposition~\ref{Laplace_chain_rule}, we refer the reader to (III.24) in~\cite{guneysu-17}.

\begin{prop}\label{Laplace_chain_rule}
Let $(M, g)$ be a an $n$-dimensional Riemannian manifold, with Riemannian metric $g$.
Let $v : M \rightarrow \RR$ be a smooth function, and
$f : U \rightarrow \RR$ a smooth function, where $U$ is an open set
of $\RR$ containing the range of $v$. We then have:
\begin{enumerate}
\item [(i)] $\Delta_g(f \circ v) = -f''(v)|dv|_g^2 +f'(v)\Delta_gv$,
\item [(ii)] $Hess_g(f \circ v) = f''(v)dv\otimes dv + f'(v)Hess_g(v)$,
\item[(iii)] $|\Delta_gf|\leq \sqrt{n}|Hess_g(f)|_{g}$, where the inequality is understood in pointwise sense.
\end{enumerate}

\end{prop}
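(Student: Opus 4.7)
The proposition gathers three standard differential-geometric identities, so the plan is to treat the parts in turn, reducing each to the ordinary chain rule for $d$ plus one structural identity. The key ingredient throughout will be the single-variable chain rule in the form $d(f\circ v)=f'(v)\,dv$.

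For part (i), I would start by writing $\Delta_g(f\circ v)=d^{\dagger}d(f\circ v)=d^{\dagger}\bigl(f'(v)\,dv\bigr)$, using the differential chain rule. Then I would apply the scalar-form Leibniz rule for $d^{\dagger}$, namely $d^{\dagger}(h\alpha)=h\,d^{\dagger}\alpha-\langle dh,\alpha\rangle_g$ for $h\in C^{\infty}(M)$ and a $1$-form $\alpha$ (this is proposition~\ref{adjoint_product_formula}(i) of the paper, already assumed). Applied with $h=f'(v)$ and $\alpha=dv$, and using $dh=f''(v)\,dv$, this gives $\Delta_g(f\circ v)=f'(v)\Delta_g v-f''(v)|dv|_g^2$, matching the sign convention in which $\Delta_g=d^{\dagger}d$ is non-negative.

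For part (ii), I would again begin with $d(f\circ v)=f'(v)\,dv$ and apply $\nabla^{lc,g}$ to both sides. Since $\nabla^{lc,g}$ satisfies the Leibniz rule when a $1$-form is multiplied by a scalar function, I get $\nabla^{lc,g}\bigl(f'(v)\,dv\bigr)=d\bigl(f'(v)\bigr)\otimes dv+f'(v)\nabla^{lc,g}dv$. Computing $d(f'(v))=f''(v)\,dv$ and using the definition $Hess_g(\cdot)=\nabla^{lc,g}d(\cdot)$, this yields the claimed formula $Hess_g(f\circ v)=f''(v)\,dv\otimes dv+f'(v)\,Hess_g(v)$.

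For part (iii), the idea is that $\Delta_g f$ is the metric trace of $Hess_g(f)$ up to sign. In a local orthonormal frame $\{e_i\}_{i=1}^n$, I would observe that $\Delta_g f=-\sum_{i=1}^n Hess_g(f)(e_i,e_i)$, so the Cauchy--Schwarz inequality gives
\[
|\Delta_g f|^2=\Bigl|\sum_{i=1}^n Hess_g(f)(e_i,e_i)\Bigr|^2\leq n\sum_{i=1}^n |Hess_g(f)(e_i,e_i)|^2\leq n\,|Hess_g(f)|_g^2,
\]
where the last step uses that the sum of squares of diagonal entries is bounded by the full Hilbert--Schmidt norm squared. Taking square roots gives the pointwise bound $|\Delta_g f|\leq\sqrt{n}\,|Hess_g(f)|_g$. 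None of these steps should present a real obstacle; the only care needed is to fix conventions consistently, especially the sign convention relating $\Delta_g=d^{\dagger}d$ to the trace of the Hessian, since this governs which sign appears in (i) and ensures (iii) is sign-robust via the absolute value.
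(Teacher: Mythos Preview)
Your proposal is correct and complete. The paper itself does not give a proof of this proposition: it merely points to exercises in Grigor'yan's book for (i), describes (ii) exactly as you prove it (``combining the formula for the differential of a composition with the definition of Hessian''), and cites G\"uneysu's book for (iii). Your arguments are the standard ones these references would yield, so there is nothing substantive to compare; the only minor remark is that the Leibniz rule for $d^{\dagger}$ you invoke in (i) appears in the paper as proposition~\ref{adjoint_product_formula}(i), which is stated \emph{after} proposition~\ref{Laplace_chain_rule}, but since its proof does not rely on the chain rule there is no circularity.
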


Next we recall product rules for the formal adjoint of a connection and
for the Bochner Laplacian.

\begin{prop}\label{adjoint_product_formula}
Let $(M, g)$ be a Riemannian manifold and let $E$ be a Hermitian vector bundle over $M$  with
a Hermitian connection $\nabla$. Let $\Delta_{B} := \nabla^{\dagger}\nabla$ denote
the associated Bochner Laplacian. Let $f \in  W^{2,\infty}_{loc}(M)$ and
$u \in W^{2,2}_{loc}(E)$. Then
\begin{enumerate}
\item [(i)]$\nabla^{\dagger}(f\nabla u) = f\Delta_{B} u - \nabla_{(df)^\#}u$,
\item [(ii)] $\Delta_{B}(fu) = f\Delta_{B} u - 2\nabla_{(df)^\#}u + u\Delta_g f$,
\end{enumerate}
where $(df)^{\#}$ stands for the vector field corresponding to $df$ via the metric $g$.
\end{prop}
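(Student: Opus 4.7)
The plan is to derive both identities from the defining property of the formal adjoint together with the elementary product rule for $\nabla$. I would prove (i) first by direct duality against test sections, and then deduce (ii) by applying $\nabla^{\dagger}$ to $\nabla(fu) = df\otimes u + f\nabla u$ and handling the new ``divergence'' term $\nabla^{\dagger}(df\otimes u)$ separately.

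For (i), fix an arbitrary $\phi \in C_c^{\infty}(E)$. Because $f\in W^{2,\infty}_{loc}(M)$ and $u\in W^{2,2}_{loc}(E)$, we have $f\nabla u \in W^{1,2}_{loc}(T^*M\otimes E)$, so the pairing
\[
(\phi, \nabla^{\dagger}(f\nabla u)) \;=\; (\nabla\phi, f\nabla u) \;=\; (f\nabla\phi, \nabla u)
\]
is legitimate. Writing $f\nabla\phi = \nabla(f\phi) - df\otimes\phi$ and invoking the defining property of $\nabla^{\dagger}$ once more on the first summand (allowed since $f\phi \in W^{2,2}_{comp}(E)$), together with the pointwise identity $\langle df\otimes\phi, \nabla u\rangle = h(\phi, \nabla_{(df)^\#}u)$ obtained by tracing out the cotangent factor, yields
\[
(\phi, \nabla^{\dagger}(f\nabla u)) \;=\; (\phi,\, f\Delta_B u - \nabla_{(df)^\#}u),
\]
and (i) follows since $\phi$ is arbitrary.

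For (ii), apply $\nabla^{\dagger}$ to $\nabla(fu) = df\otimes u + f\nabla u$. The second summand contributes $f\Delta_B u - \nabla_{(df)^\#}u$ by (i), so the remaining task is to verify $\nabla^{\dagger}(df\otimes u) = (\Delta_g f)u - \nabla_{(df)^\#}u$. I would again test against $\phi\in C_c^{\infty}(E)$, reducing the pairing to $\int h(\nabla_{(df)^\#}\phi, u)\,d\mu$, and then transfer the derivative off $\phi$ using the Hermitian property
\[
(df)^\#\big(h(\phi,u)\big) \;=\; h(\nabla_{(df)^\#}\phi, u) + h(\phi, \nabla_{(df)^\#}u),
\]
together with the vanishing of $\int \operatorname{div}\!\big(h(\phi,u)(df)^\#\big)\,d\mu$ (since $\phi$ is compactly supported) and the identity $\operatorname{div}((df)^\#) = -\Delta_g f$, dictated by our non-negative sign convention for $\Delta_g$. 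Combining this with (i) gives (ii).

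The only real obstacle is the regularity bookkeeping rather than the algebra. Under the stated hypotheses all terms in (i) and (ii)---namely $f\Delta_B u$, $(\Delta_g f)u$, $\nabla_{(df)^\#}u$, $\nabla^{\dagger}(f\nabla u)$ and $\Delta_B(fu)$---lie in $L^2_{loc}(E)$, so both identities make sense as equalities of $L^2_{loc}$-sections and the argument above can be executed entirely within the distributional pairing with $C_c^{\infty}(E)$. Alternatively, one could first verify the formulas for smooth $f$ and $u$ by the same calculation and then pass to the stated Sobolev regularity via Friedrichs mollifiers, exploiting that the relevant commutators are bounded in $L^2_{loc}$.
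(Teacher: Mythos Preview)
Your proposal is correct and follows essentially the same route as the paper: both prove (i) by pairing with a compactly supported test section, moving $f$ across, and using the product rule $\nabla(fv)=df\otimes v+f\nabla v$ together with the contraction identity $\langle df\otimes v,\nabla u\rangle=h(v,\nabla_{(df)^{\#}}u)$; both then derive (ii) by applying $\nabla^{\dagger}$ to $\nabla(fu)=df\otimes u+f\nabla u$, handling $f\nabla u$ via (i) and $df\otimes u$ via the identity $\nabla^{\dagger}(\omega\otimes z)=(d^{\dagger}\omega)z-\nabla_{\omega^{\#}}z$. The only notable difference is that the paper simply quotes this last identity from the literature (G\"uneysu, eq.~(III.7)), whereas you supply a direct proof of it using the Hermitian compatibility of $\nabla$ and the divergence theorem---a minor but genuinely self-contained addition.
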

\begin{proof} Using integration by parts and the product rule for $\nabla$, for all $v\in C_{c}^{\infty}(E)$ we have
\begin{align}
(\nabla^{\dagger}(f\nabla u),v)&=(\nabla u,f\nabla v)=(\nabla u, \nabla(fv))-(\nabla u, df\otimes v)\nonumber\\
&=(\nabla^{\dagger}\nabla u, fv)-(\nabla_{(df)^\#} u,v)= (f\nabla^{\dagger}\nabla u, v)-(\nabla_{(df)^\#} u,v),\nonumber
\end{align}
which gives the formula (i). The formula (ii) will then follow by using the product rule for $\nabla$, the formula (i) of this proposition, and the following formula (see the equation~(III.7) of~\cite{guneysu-17}):
\begin{equation*}
\nabla^{\dagger}(\omega\otimes z)= (d^{\dagger}\omega)z-\nabla_{\omega^\#}z,
\end{equation*}
where $z\in W^{1,2}_{loc}(E)$ and $\omega$ is a $1$-form on $M$ belonging to $W^{1,\infty}_{loc}(\Lambda^1T^*M)$.
\end{proof}

\begin{prop}\label{d_conf_trans_est}
Let $(M, g)$ be a Riemannian manifold, with Riemannian metric $g$.
Let $\tilde{g} = \lambda^{\alpha} g$, where
$\lambda : M \rightarrow (0, \infty)$ is a smooth function, and
$\alpha \in \RR$.
Let $f : M \rightarrow \CC$ be a $C^1$ function on $M$. Suppose we have
the bound $|df|_{\tilde{g}} \leq \phi$, where
$\phi : M \rightarrow [0, \infty)$.
Then we have
\begin{equation*}
|df|_{g} \leq \lambda^{\alpha/2}\phi.
\end{equation*}
\end{prop}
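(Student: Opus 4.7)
The plan is to reduce the claim to the elementary fact that a conformal rescaling of the metric on $TM$ induces the inverse rescaling on the dual metric of $T^*M$. Since $df$ is a $1$-form, its pointwise norm is computed using the cotangent metric, so the transformation law will give the stated inequality in a single line after taking square roots.

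First I would record the transformation law for the dual metric. In local coordinates, $\tilde{g}_{ij} = \lambda^{\alpha} g_{ij}$, and inverting the matrix gives $\tilde{g}^{ij} = \lambda^{-\alpha} g^{ij}$. More invariantly, the musical isomorphism induced by $\tilde g$ satisfies $\sharp_{\tilde g} = \lambda^{-\alpha}\sharp_{g}$ on $T^*M$, so for any cotangent vector $\omega$ at a point one has
\begin{equation*}
|\omega|_{\tilde g}^{2} = \tilde g(\omega^{\sharp_{\tilde g}},\omega^{\sharp_{\tilde g}}) = \lambda^{-\alpha}\, g^{ij}\omega_{i}\omega_{j} = \lambda^{-\alpha}|\omega|_{g}^{2}.
\end{equation*}

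Next I would apply this identity to $\omega = df$, which gives the pointwise relation $|df|_{g}^{2} = \lambda^{\alpha}|df|_{\tilde g}^{2}$. Combined with the hypothesis $|df|_{\tilde g}\leq \phi$, this yields $|df|_{g}^{2}\leq \lambda^{\alpha}\phi^{2}$, and taking the nonnegative square root (using that $\lambda>0$) delivers the desired bound $|df|_{g}\leq \lambda^{\alpha/2}\phi$.

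There is essentially no obstacle here: the content of the proposition is purely the conformal transformation rule for covectors, and the hypothesis is phrased exactly so that a direct pointwise comparison closes the argument. The only care point is ensuring the correct sign of the exponent — that the cotangent metric scales by $\lambda^{-\alpha}$ rather than $\lambda^{\alpha}$ — which is settled by observing that $\tilde g^{ij}$ is the matrix inverse of $\tilde g_{ij}$.
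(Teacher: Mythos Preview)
Your argument is correct and is essentially the same as the paper's: both compute in local coordinates that $\tilde g^{ij}=\lambda^{-\alpha}g^{ij}$, hence $|df|_g^2=\lambda^{\alpha}|df|_{\tilde g}^2$, and then apply the hypothesis. The only cosmetic difference is that you phrase the transformation law a bit more invariantly via the musical isomorphism.
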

\begin{proof}
This is a pointwise estimate, so it suffices to work in coordinates.
Let $(x^i)$ be local coordinates about a point in $M$. We then
compute
\begin{equation*}
\langle df , df\rangle_{g} =
g^{ij}\frac{\partial{f}}{\partial{x}^i}
\frac{\partial{f}}{\partial{x}^j} =
\lambda^{\alpha}\tilde{g}^{ij}\frac{\partial{f}}{\partial{x}^i}
\frac{\partial{f}}{\partial{x}^j} =
\lambda^{\alpha}\langle df, df\rangle_{\tilde{g}}.
\end{equation*}
The result then follows.
\end{proof}

\begin{prop}\label{Lapl_conf_trans_form}
Let $(M, g)$ be a Riemannian manifold, with Riemannian metric $g$.
Let
$\tilde{g} = \lambda^{\alpha} g$, where
$\lambda : M \rightarrow (0, \infty)$ is a smooth function, and
$\alpha \in \RR$.
Let $P : M \rightarrow \CC$ be a $C^2$ function on $M$.
We then have the following formula for $\Delta_gP$ in terms of
$\Delta_{\tilde{g}}P$
\begin{equation*}
\Delta_gP = \lambda^{\alpha}\Delta_{\tilde{g}}P +
\bigg{(}\frac{2\alpha -n\alpha}{2}\bigg{)}\lambda^{(\alpha -1)}
\langle dP, d\lambda\rangle_{\tilde{g}}.
\end{equation*}
\end{prop}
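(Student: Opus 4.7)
The plan is to carry out the computation in local coordinates, where $\Delta_g$ has the familiar divergence-form expression and the scaling rules for the conformally transformed metric are immediate. Fix a chart with coordinates $(x^i)$. From $\tilde{g}_{ij} = \lambda^{\alpha} g_{ij}$ I read off $\tilde{g}^{ij} = \lambda^{-\alpha}g^{ij}$ for the inverse metric and $\sqrt{|\tilde{g}|} = \lambda^{n\alpha/2}\sqrt{|g|}$ for the volume density. Using the paper's convention $\Delta_g P = -\frac{1}{\sqrt{|g|}}\partial_i(\sqrt{|g|}\,g^{ij}\partial_j P)$ coming from $\Delta_g = d^{\dagger}d$, the analogous expression holds for $\Delta_{\tilde{g}}$ with tildes throughout.

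The core of the argument is to substitute the scaling rules into $\Delta_{\tilde{g}}P$ and apply the Leibniz rule. The combination $\sqrt{|\tilde{g}|}\,\tilde{g}^{ij}$ simplifies to $\lambda^{(n-2)\alpha/2}\sqrt{|g|}\,g^{ij}$, and pulling the overall prefactor $\lambda^{-n\alpha/2}$ outside the derivative leaves a single $\partial_i\!\left(\lambda^{(n-2)\alpha/2}\sqrt{|g|}\,g^{ij}\partial_j P\right)$ to expand. The Leibniz rule produces two pieces: one in which $\partial_i$ lands on $\sqrt{|g|}\,g^{ij}\partial_j P$ and reassembles, after restoring the minus sign, into $\lambda^{-\alpha}\Delta_g P$; and one in which $\partial_i$ lands on the $\lambda$-power and produces a term proportional to $\lambda^{-\alpha-1}\langle d\lambda, dP\rangle_g$ with coefficient built from $\frac{(n-2)\alpha}{2}$.

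Solving the resulting identity for $\Delta_g P$ gives a formula in which the gradient-type term is expressed using $\langle dP, d\lambda\rangle_g$. The last step is to convert to $\langle dP, d\lambda\rangle_{\tilde{g}}$ using the observation that $\tilde{g}^{ij} = \lambda^{-\alpha}g^{ij}$ implies $\langle \omega, \eta\rangle_g = \lambda^{\alpha}\langle \omega, \eta\rangle_{\tilde{g}}$ for any $1$-forms $\omega, \eta$. This absorbs one factor of $\lambda$ and produces the claimed prefactor $\frac{2\alpha - n\alpha}{2}\lambda^{\alpha-1}$.

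There is no real obstacle here; the argument is elementary coordinate bookkeeping. The two things that require care are tracking the overall sign coming from the positive-Laplacian convention $\Delta_g = d^{\dagger}d$, and keeping the $g$- and $\tilde{g}$-inner products straight when packaging the final gradient-type term. A coordinate-free alternative would be to set $\sigma := \tfrac{\alpha}{2}\log\lambda$, so that $\tilde{g} = e^{2\sigma}g$, invoke the classical conformal transformation law $\Delta_{\tilde{g}}P = e^{-2\sigma}\bigl(\Delta_g P - (n-2)\langle d\sigma, dP\rangle_g\bigr)$, and substitute $d\sigma = \tfrac{\alpha}{2}\lambda^{-1}d\lambda$; this recovers the same identity in essentially one line by the chain rule.
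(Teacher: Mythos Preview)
Your approach is exactly the paper's: a local-coordinate computation using the divergence form of the Laplacian and the scaling rules $\tilde g^{ij}=\lambda^{-\alpha}g^{ij}$, $\sqrt{|\tilde g|}=\lambda^{n\alpha/2}\sqrt{|g|}$, followed by the Leibniz rule. The paper starts from $\Delta_g P$ and expands toward $\Delta_{\tilde g}P$, while you start from $\Delta_{\tilde g}P$ and solve for $\Delta_g P$; these are the same computation read in opposite directions, and your coordinate-free remark via $\tilde g=e^{2\sigma}g$ is a legitimate shortcut.

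One caveat worth recording: with the paper's non-negative convention $\Delta_g=d^{\dagger}d$, i.e.\ $\Delta_g P=-|g|^{-1/2}\partial_i(|g|^{1/2}g^{ij}\partial_j P)$, carrying your computation through carefully gives the coefficient $\frac{(n-2)\alpha}{2}$, which is the \emph{negative} of the stated $\frac{2\alpha-n\alpha}{2}$. The paper's own proof silently drops the minus sign in the local formula, which is why it lands on the stated sign. This discrepancy is immaterial for the sequel, since the only use of the proposition is Corollary~\ref{Lapl_conf_trans_bound}, where one immediately takes absolute values; but you should be aware that your stated convention does not literally reproduce the displayed coefficient.
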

\begin{proof}
In local coordinates $(x^i)$ we can write
$\Delta_gP =
\frac{1}{\rho}\frac{\partial}{\partial{x^j}}\bigg{(}
\rho g^{jk}\frac{\partial{P}}{\partial{x^k}} \bigg{)}$, where
$\rho = \sqrt{det(g_{ij})}$ with $g_{ij}$ being the components of the
metric tensor in $(x^i)$ coordinates. It is then easy to see
that $\tilde{\rho} = \sqrt{det(\tilde{g}_{ij})} =
\lambda^{n\alpha/2}\rho$.

In these local coordinates, we compute
\begin{align*}
\Delta_gP &=
\frac{1}{\rho}\frac{\partial}{\partial{x^j}}\bigg{(}
\rho g^{jk}\frac{\partial{P}}{\partial{x^k}} \bigg{)} \\
&= \frac{1}{\lambda^{-n\alpha/2}\tilde{\rho}}
\frac{\partial}{\partial{x^j}}\bigg{(}
\lambda^{-n\alpha/2}\tilde{\rho}\lambda^{\alpha}\tilde{g}^{jk}
\frac{\partial{P}}{\partial{x}^k}\bigg{)} \\
&= \lambda^{\alpha}\Delta_{\tilde{g}}P +
\frac{1}{\lambda^{-n\alpha/2}\tilde{\rho}}\tilde{\rho}
\tilde{g}^{jk}\frac{\partial{P}}{\partial{x^k}}
\frac{\partial}{\partial{x^j}}\bigg{(}
\lambda^{\frac{-n\alpha + 2\alpha}{2}}
\bigg{)} \\
&=  \lambda^{\alpha}\Delta_{\tilde{g}}P + \bigg{(}
\frac{2\alpha - n\alpha}{2} \bigg{)}
\lambda^{(\alpha - 1)}\tilde{g}^{jk}\frac{\partial{P}}{\partial{x}^k}
\frac{\partial{\lambda}}{\partial{x}^j} \\
&=  \lambda^{\alpha}\Delta_{\tilde{g}}P +
\bigg{(}
\frac{2\alpha - n\alpha}{2} \bigg{)}\lambda^{(\alpha - 1)}
\langle dP, d\lambda\rangle_{\tilde{g}}.
\end{align*}
\end{proof}

\begin{cor}\label{Lapl_conf_trans_bound}
Assume that the hypotheses of proposition \ref{Lapl_conf_trans_form} are satisfied.
Suppose we have the bound
$|dP|_{\tilde{g}} \leq h_1$ and
$|\Delta_{\tilde{g}}P| \leq h_2$, where
$h_i : M \rightarrow [0, \infty)$ for $i = 1, 2$. Then
\begin{equation*}
|\Delta_gP|\leq \lambda^{\alpha}h_2 +
\bigg{|}\frac{2\alpha - n\alpha}{2} \bigg{|}
\lambda^{\frac{\alpha - 2}{2}}|d\lambda|_gh_1.
\end{equation*}
\end{cor}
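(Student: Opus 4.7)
The plan is to derive this corollary directly from proposition~\ref{Lapl_conf_trans_form} by taking absolute values, applying the triangle inequality, and then converting norms with respect to $\tilde g$ into norms with respect to $g$ using the metric relation $\tilde g = \lambda^{\alpha} g$.

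More concretely, I would start from the identity
\[
\Delta_gP = \lambda^{\alpha}\Delta_{\tilde{g}}P + \left(\frac{2\alpha -n\alpha}{2}\right)\lambda^{\alpha -1} \langle dP, d\lambda\rangle_{\tilde{g}},
\]
apply the triangle inequality, and then use the Cauchy--Schwarz inequality in the metric $\tilde g$ to obtain
\[
|\langle dP, d\lambda\rangle_{\tilde g}| \leq |dP|_{\tilde g}\, |d\lambda|_{\tilde g} \leq h_1\, |d\lambda|_{\tilde g}.
\]

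The next step is to express $|d\lambda|_{\tilde g}$ in terms of $|d\lambda|_g$. From the proof of proposition~\ref{d_conf_trans_est} we actually have the pointwise identity $|df|_g^2 = \lambda^{\alpha}|df|_{\tilde g}^2$ for every $C^1$ function $f$, so in particular $|d\lambda|_{\tilde g} = \lambda^{-\alpha/2}|d\lambda|_g$. Substituting this into the previous estimate gives a factor of $\lambda^{\alpha-1}\cdot\lambda^{-\alpha/2} = \lambda^{(\alpha-2)/2}$, which is exactly the exponent appearing in the claimed bound.

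Combining the pieces, I obtain
\[
|\Delta_g P| \leq \lambda^{\alpha}|\Delta_{\tilde g} P| + \left|\frac{2\alpha - n\alpha}{2}\right|\lambda^{(\alpha-2)/2}|d\lambda|_g\, h_1 \leq \lambda^{\alpha} h_2 + \left|\frac{2\alpha - n\alpha}{2}\right|\lambda^{(\alpha-2)/2}|d\lambda|_g\, h_1,
\]
which is the desired inequality. There is no serious obstacle here: the corollary is a direct bookkeeping consequence of proposition~\ref{Lapl_conf_trans_form} and the conformal rescaling identity from proposition~\ref{d_conf_trans_est}, with the only mildly delicate point being the correct handling of the exponent of $\lambda$ after Cauchy--Schwarz.
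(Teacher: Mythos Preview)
Your proof is correct and follows essentially the same approach as the paper: both start from the identity in proposition~\ref{Lapl_conf_trans_form}, apply the triangle inequality and Cauchy--Schwarz, and use the conformal rescaling relation $|df|_g = \lambda^{\alpha/2}|df|_{\tilde g}$ from proposition~\ref{d_conf_trans_est}. The only cosmetic difference is that the paper first rewrites the cross term as $\lambda^{-1}\langle dP,d\lambda\rangle_g$ and then applies Cauchy--Schwarz in the $g$-metric (bounding $|dP|_g$ via proposition~\ref{d_conf_trans_est}), whereas you apply Cauchy--Schwarz directly in the $\tilde g$-metric and then convert $|d\lambda|_{\tilde g}$; the arithmetic of exponents is identical either way.
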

\begin{proof}
Using the formula $\langle dP, d\lambda\rangle_{\tilde{g}}=\lambda^{-\alpha}\langle dP, d\lambda\rangle_{g}$, we write proposition \ref{Lapl_conf_trans_form} in a slightly different
form:
\begin{equation*}
\Delta_gP = \lambda^{\alpha}\Delta_{\tilde{g}}P +
\bigg{(}\frac{2\alpha -n\alpha}{2}\bigg{)}\lambda^{-1}
\langle dP, d\lambda\rangle_{g}.
\end{equation*}
Using the above formula, we estimate
\begin{align*}
|\Delta_gP| &\leq \lambda^{\alpha}|\Delta_{\tilde{g}}P| +
\bigg{|}\frac{2\alpha - n\alpha}{2} \bigg{|}\lambda^{-1}
|dP|_g|d\lambda|_g \\
&= \lambda^{\alpha}|\Delta_{\tilde{g}}P| +
\bigg{|}\frac{2\alpha - n\alpha}{2} \bigg{|}\lambda^{-1}
|dP|_g|d\lambda|_g \\
&\leq
\lambda^{\alpha}h_2 +
\bigg{|}\frac{2\alpha - n\alpha}{2} \bigg{|}\lambda^{-1}
|dP|_g|d\lambda|_g.
\end{align*}
We can then estimate the $|dP|_g$ term using proposition
\ref{d_conf_trans_est}, and the corollary follows.
\end{proof}

We will need a formula that tells us how the
Ricci curvature changes under a conformal transformation.
For a proof of this proposition, the reader can consult \cite{besse}.

\begin{prop}\label{Ricci_curvature_conform_trans}
Let $(M, g)$ be a Riemannian manifold of dimension $n$, with Riemannian
metric $g$.
Let $f\in C^{\infty}(M)$ be a real-valued function and let $\tilde{g} = e^{2f}g$.
Let $Ric_{g}$ and $Ric_{\tilde{g}}$ denote Ricci curvature tensors with respect to $g$ and $\tilde{g}$ respectively. We then have the following formula:
\begin{equation*}
Ric_{\tilde{g}} = Ric_{g} - (n-2)(Hess_{g}(f) - df\otimes df) + (\Delta_gf
- (n-2)|df|_g^2)g.
\end{equation*}
\end{prop}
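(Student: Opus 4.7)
The plan is to derive the formula by first computing how the Levi--Civita connection transforms under the conformal rescaling $\tilde{g}=e^{2f}g$, then passing to the Riemann curvature tensor, and finally taking the appropriate trace to obtain the Ricci tensor. Throughout, I write $\nabla$ and $\tilde{\nabla}$ for the Levi--Civita connections of $g$ and $\tilde{g}$, respectively.

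First, applying the Koszul formula to $\tilde{g}=e^{2f}g$, I would derive the identity
\[
\tilde{\nabla}_X Y = \nabla_X Y + T(X,Y),\qquad T(X,Y) := df(X)\,Y + df(Y)\,X - g(X,Y)\,\nabla_g f,
\]
valid for all smooth vector fields $X,Y$ on $M$. This is a standard computation: the right-hand side is symmetric in $X,Y$ (so the perturbed connection is torsion-free), and one checks directly that it is compatible with $\tilde{g}$, so by uniqueness the two connections coincide.

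Next, since $\tilde{\nabla}-\nabla = T$ is a $(1,2)$-tensor, one has the general identity
\begin{align*}
\tilde{R}(X,Y)Z - R(X,Y)Z
&= (\nabla_X T)(Y,Z) - (\nabla_Y T)(X,Z) \\
&\quad + T(X,T(Y,Z)) - T(Y,T(X,Z)).
\end{align*}
I would substitute the explicit form of $T$ into this identity. Using that $\nabla df = Hess_g(f)$, the linear (in $T$) part produces terms involving $Hess_g(f)$, while the quadratic part produces terms involving $df\otimes df$, $|df|_g^2\,g$, and various mixed contractions.

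Finally, to obtain the Ricci tensor I would take the trace $Ric_{\tilde{g}}(Y,Z)=\operatorname{tr}\bigl(X\mapsto \tilde{R}(X,Y)Z\bigr)$ using a local $g$-orthonormal frame $\{e_i\}$ (the trace being frame-independent). Two kinds of contributions emerge: the trace of the linear-in-$T$ term yields the $-(n-2)\,Hess_g(f)$ piece together with the $(\Delta_g f)\,g$ piece (the latter arising when the trace falls on the $-g(X,Y)\nabla_g f$ component of $T$), while the trace of the quadratic-in-$T$ terms yields the $(n-2)\,df\otimes df$ piece and the $-(n-2)|df|_g^2\,g$ piece. Collecting the pieces gives exactly the stated formula. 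The main obstacle will be the careful bookkeeping in the trace step: the factor $(n-2)$ must emerge consistently from several independent sources (the two symmetric $df(\cdot)X$-type summands in $T$, and the contraction of $g(X,Y)\nabla_g f$ against the $n$-dimensional frame), and managing signs in the quadratic contributions requires care. As a sanity check, one can verify the formula in dimension two (where the right-hand side reduces to $Ric_g + (\Delta_g f)\,g$, consistent with every surface being locally conformally flat) and in the case $f\equiv \text{const}$ (where everything collapses to $Ric_{\tilde{g}} = Ric_g$).
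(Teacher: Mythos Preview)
Your outline is the standard derivation and is correct. Note, however, that the paper does not actually prove this proposition: it simply states the formula and refers the reader to Besse's \emph{Einstein Manifolds} for a proof. So there is no proof in the paper to compare against; you are supplying an argument where the authors defer to the literature.

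One point to watch in carrying out your computation: the paper's Laplacian is the non-negative one, $\Delta_g = d^{\dagger}d$, so that $\Delta_g f = -\operatorname{tr}_g Hess_g(f)$. When you trace the $-g(X,Y)\nabla_g f$ component of $T$ through the curvature formula, make sure the sign of the resulting $(\Delta_g f)\,g$ term matches this convention; otherwise the bookkeeping you flag as the main obstacle is exactly right, and your sanity checks (dimension two, constant $f$) are good safeguards.
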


We will be using the above proposition in the following way:

\begin{prop}\label{Ric_bounded_conform_trans}
Let $(M, g)$ be an $n$-dimensional Riemannian manifold. Let $Q : M \rightarrow [1, \infty)$ be a smooth function satisfying
the following bounds:
\begin{itemize}
\item[(i)] $|dQ|_g \leq CQ^{1/4}$ for some constant $C \geq 0$,

\item[(ii)] $|Hess_{g}(Q)|_g \leq CQ^{-1/2}$ for some constant
$C \geq 0$.
\end{itemize}
Let $\tilde{g} = Q^{-3/2}g = e^{2Log(Q^{\frac{-3}{4}})}g$. Assume that $Ric_g\geq 0$. Then, $Ric_{\tilde{g}}\geq -K$, where $K\geq 0$ is some constant.

\end{prop}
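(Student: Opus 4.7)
The plan is to apply Proposition~\ref{Ricci_curvature_conform_trans} with conformal factor $e^{2f}=Q^{-3/2}$, that is, with $f=-\tfrac{3}{4}\log Q$, and then show that every correction term on the right-hand side of the resulting identity is controlled by a constant multiple of $\tilde{g}$. First, using the chain rules in Proposition~\ref{Laplace_chain_rule}(i)-(ii) with outer function $\phi(t)=-\tfrac{3}{4}\log t$ (so $\phi'(t)=-\tfrac{3}{4t}$, $\phi''(t)=\tfrac{3}{4t^2}$) and inner function $Q$, I would compute
\begin{equation*}
df=-\tfrac{3}{4}Q^{-1}dQ,\qquad
Hess_{g}(f)=\tfrac{3}{4}Q^{-2}\,dQ\otimes dQ-\tfrac{3}{4}Q^{-1}Hess_{g}(Q),
\end{equation*}
\begin{equation*}
\Delta_{g}f=\tfrac{3}{4}Q^{-2}|dQ|_{g}^{2}-\tfrac{3}{4}Q^{-1}\Delta_{g}Q.
\end{equation*}

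Next, I would invoke hypotheses (i) and (ii) on $Q$, together with the pointwise bound $|\Delta_{g}Q|\leq\sqrt{n}\,|Hess_{g}(Q)|_{g}\leq \sqrt{n}\,CQ^{-1/2}$ from Proposition~\ref{Laplace_chain_rule}(iii), to obtain
\begin{equation*}
|df|_{g}^{2}\leq C_{1}Q^{-3/2},\quad |df\otimes df|_{g}\leq C_{2}Q^{-3/2},\quad |Hess_{g}(f)|_{g}\leq C_{3}Q^{-3/2},\quad |\Delta_{g}f|\leq C_{4}Q^{-3/2},
\end{equation*}
where the $C_{j}$ depend only on $n$ and $C$. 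This is where the particular exponents in (i) and (ii) are used in an essential way: they are calibrated so that every object appearing in the conformal Ricci formula lives at the scale $Q^{-3/2}$.

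Then, for any $x\in M$ and any $X\in T_{x}M$, I would use the pointwise inequality $|T(X,X)|\leq |T|_{g}\,g(X,X)$ valid for symmetric $(0,2)$-tensors $T$, together with the conversion $g(X,X)=Q^{3/2}\tilde{g}(X,X)$ (since $g=Q^{3/2}\tilde{g}$), to convert each correction term in Proposition~\ref{Ricci_curvature_conform_trans} into a bound of the form $(\text{const})\,\tilde{g}(X,X)$; for instance,
\begin{equation*}
|Hess_{g}(f)(X,X)|\leq |Hess_{g}(f)|_{g}\,Q^{3/2}\tilde{g}(X,X)\leq C_{3}\tilde{g}(X,X),
\end{equation*}
and similarly for $df\otimes df(X,X)$, $\Delta_{g}f\cdot g(X,X)$ and $|df|_{g}^{2}g(X,X)$. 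Combining these bounds with the hypothesis $Ric_{g}\geq 0$, the formula in Proposition~\ref{Ricci_curvature_conform_trans} yields $Ric_{\tilde{g}}(X,X)\geq -K\tilde{g}(X,X)$ for some constant $K\geq 0$ depending only on $n$ and $C$, which is the claim.

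No single step is deep; the only real obstacle is bookkeeping. The delicate point is precisely that the three distinct scalings $|df|_{g}^{2}\sim Q^{-3/2}$, $|Hess_{g}(f)|_{g}\sim Q^{-3/2}$, and $|\Delta_{g}f|\sim Q^{-3/2}$ all emerge simultaneously from the two hypotheses on $dQ$ and $Hess_{g}(Q)$, and it is exactly the factor $Q^{-3/2}$ that cancels the factor $Q^{3/2}$ coming from the conformal conversion $g=Q^{3/2}\tilde{g}$, leaving the desired constant lower bound.
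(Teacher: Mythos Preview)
Your proposal is correct and follows essentially the same route as the paper: apply the conformal Ricci formula with $f=-\tfrac34\log Q$, use the chain rules to bound $Hess_{g}(f)$, $df\otimes df$, $|df|_{g}^{2}$, and $\Delta_{g}f$ by $CQ^{-3/2}$, and then absorb the factor $Q^{-3/2}$ via $g=Q^{3/2}\tilde g$. The only slip is a sign in your displayed formula for $\Delta_{g}f$ (the $|dQ|_{g}^{2}$ term should carry a minus, since $-\phi''(Q)=-\tfrac34 Q^{-2}$), but this is irrelevant because you only use the absolute value.
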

\begin{proof}
For $f=Log(Q^{-3/4})$, we compute
\begin{equation*}
Hess_{g}(f) - df\otimes df = \frac{3}{4}\frac{1}{Q^2}dQ\otimes dQ -
\frac{3}{4}\frac{1}{Q}Hess_{g}(Q),
\end{equation*}
where we used proposition~\ref{Laplace_chain_rule}(ii). Applying the assumptions (i) and (ii), we obtain
\begin{equation*}
|Hess_{g}(f) - df\otimes df|_g \leq CQ^{-3/2}.
\end{equation*}
Appealing to proposition \ref{Laplace_chain_rule}(i), we have
\begin{equation*}
\Delta_gf = -\frac{3}{4}Q^{-2}|dQ|^2_g - \frac{3}{4}Q^{-1}\Delta_gQ.
\end{equation*}
Using the assumptions (i) and (ii) together with proposition \ref{Laplace_chain_rule}(iii),  we get
\begin{equation*}
|\Delta_gf| \leq CQ^{-3/2}.
\end{equation*}
Keeping in mind that $\langle\cdot,\cdot\rangle_{g}=Q^{3/2}\langle\cdot,\cdot\rangle_{\tilde{g}}$, the result then follows from proposition \ref{Ricci_curvature_conform_trans} and the assumption that
$Ric_g\geq 0$.
\end{proof}

\begin{prop}\label{Ric_bounded_conform_trans-2}
Let $(M, g)$ be an $n$-dimensional Riemannian manifold.
Let $Q : M \rightarrow [1, \infty)$ be a smooth function satisfying
the following bounds:
\begin{itemize}
\item[(i)] $|dQ|_g \leq CQ^{3/4}$ for some constant $C \geq 0$,

\item[(ii)] $|Hess_{g}(Q)|_g \leq CQ^{1/2}$ for some constant
$C \geq 0$.
\end{itemize}
Let $\tilde{g} = Q^{-1/2}g = e^{2Log(Q^{\frac{-1}{4}})}g$. Assume that $Ric_g\geq 0$. Then, $Ric_{\tilde{g}}\geq -K$, where $K\geq 0$ is some constant.
\end{prop}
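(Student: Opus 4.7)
\medskip

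The plan is to mirror the proof of proposition~\ref{Ric_bounded_conform_trans}, using the conformal change formula~\ref{Ricci_curvature_conform_trans} with the conformal factor $f = \mathrm{Log}(Q^{-1/4}) = -\tfrac{1}{4}\log Q$, which corresponds to $\tilde{g} = e^{2f}g = Q^{-1/2}g$. The only real difference from the companion proposition is that here the constant in front of $\log Q$ is $-1/4$ instead of $-3/4$, and the assumptions (i)--(ii) on $Q$ are weaker, but are precisely the ones that compensate for the change of exponent in $\tilde{g}$.

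First I would compute $Hess_{g}(f) - df\otimes df$. Applying proposition~\ref{Laplace_chain_rule}(ii) to $f = -\tfrac{1}{4}\log Q$ gives
\[
Hess_{g}(f) - df\otimes df = \tfrac{3}{16}\,Q^{-2}\,dQ\otimes dQ - \tfrac{1}{4}\,Q^{-1}\,Hess_{g}(Q),
\]
and the bounds (i) and (ii) then yield $|Hess_{g}(f) - df\otimes df|_{g} \leq CQ^{-1/2}$. Similarly, proposition~\ref{Laplace_chain_rule}(i) gives
\[
\Delta_{g}f = -\tfrac{1}{4}\,Q^{-2}|dQ|_{g}^{2} - \tfrac{1}{4}\,Q^{-1}\,\Delta_{g}Q,
\]
and combining (i), (ii), and proposition~\ref{Laplace_chain_rule}(iii) produces $|\Delta_{g}f| \leq CQ^{-1/2}$. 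In the same fashion, $|df|_{g}^{2} \leq CQ^{-1/2}$.

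Having these three pointwise estimates, I would substitute into proposition~\ref{Ricci_curvature_conform_trans}:
\[
Ric_{\tilde{g}} = Ric_{g} - (n-2)\bigl(Hess_{g}(f) - df\otimes df\bigr) + \bigl(\Delta_{g}f - (n-2)|df|_{g}^{2}\bigr)g.
\]
The hypothesis $Ric_{g}\geq 0$ discards the first term. For the remaining error terms, I would evaluate on a tangent vector $X$, use $|\cdot|_{g}$-estimates to bound them below by $-CQ^{-1/2}\langle X,X\rangle_{g}$, and then convert to $\tilde{g}$ via the crucial identity $\langle X,X\rangle_{g} = Q^{1/2}\langle X,X\rangle_{\tilde{g}}$. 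The factor $Q^{1/2}$ exactly absorbs the $Q^{-1/2}$ from the bounds, leaving a constant multiple of $\langle X,X\rangle_{\tilde{g}}$. This gives $Ric_{\tilde{g}}\geq -K$ in the sense of the convention fixed in section~\ref{notation_main_theorems}.

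The work is essentially bookkeeping; no step is a genuine obstacle, since the hypotheses (i)--(ii) have been calibrated exactly to make the Hessian and Laplacian of $-\tfrac{1}{4}\log Q$ decay like $Q^{-1/2}$, which is precisely the rate that cancels the conformal factor. The only thing to watch carefully is that the conversion between $|\cdot|_{g}$ and $|\cdot|_{\tilde{g}}$ norms uses the correct power of $Q$, namely $Q^{1/2}$ rather than $Q^{3/2}$ as in proposition~\ref{Ric_bounded_conform_trans}.
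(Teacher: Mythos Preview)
Your proposal is correct and follows exactly the paper's approach: set $f=\mathrm{Log}(Q^{-1/4})$, mirror the computations of proposition~\ref{Ric_bounded_conform_trans} to obtain $|Hess_{g}(f)-df\otimes df|_{g}\leq CQ^{-1/2}$ and $|\Delta_{g}f|\leq CQ^{-1/2}$, and then use the relation $\langle\cdot,\cdot\rangle_{g}=Q^{1/2}\langle\cdot,\cdot\rangle_{\tilde{g}}$ together with $Ric_{g}\geq 0$. The paper's own proof is in fact terser than yours, simply referring back to the pattern of the companion proposition without writing out the intermediate formulas.
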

\begin{proof} Using $f=Log(Q^{-1/4})$ and following the same pattern as in the proof of proposition~\ref{Ric_bounded_conform_trans}, we get the estimates
\begin{equation*}
|Hess_{g}(f) - df\otimes df|_g \leq CQ^{-1/2},
\end{equation*}
\begin{equation*}
|\Delta_gf| \leq CQ^{-1/2}.
\end{equation*}
Keeping in mind the rule $\langle\cdot,\cdot\rangle_{g}=Q^{1/2}\langle\cdot,\cdot\rangle_{\tilde{g}}$ and the assumption
$Ric_g\geq 0$, we get the result from proposition \ref{Ricci_curvature_conform_trans}.
\end{proof}

\end{appendices}



\begin{thebibliography}{99}

\bibitem{besse}
Besse, A. L.: \emph{Einstein Manifolds.}
Springer-Verlag, Berlin-Heidelberg, 2008.

\bibitem{bianchi}
Bianchi, D. and Setti, A.:  \emph{Laplacian cut-offs, porous and fast diffusion on manifolds and other applications.} Calc. Var. Partial Differential Equations \textbf{57} (2018), no. 1, Art. 4, 33 pp.


\bibitem{braverman_1}
Braverman, M.: \emph{On self-adjointness of Schr{\"o}dinger operator on
differential forms},
Proc. Amer. Math. Soc. \textbf{126} (1998), 617--623.

\bibitem{br-c} Braverman, M., Cecchini, S.:
\emph{Spectral theory of von Neumann algebra valued differential operators over non-compact manifolds.}
J. Noncommut. Geom. \textbf{10} (2016), 1589--1609.

\bibitem{braverman_2}
Braverman, M., Milatovic, O. and Shubin, M.:
\emph{Essential self-adjointness of Schr{\"o}dinger type operators on
manifolds.} Russian Math. Suverys \textbf{57} (2002), 641--692.

\bibitem{brusentsev}
Brusentsev, A. G.: \emph{Essential self-adjointess of high-order
semibounded elliptic operators.} Differential Equations \textbf{21}(1985), no. 4, 450--458.

\bibitem{brb-74} Brusentsev, A. G., Rofe-Beketov, F. S.: \emph{Selfadjointness conditions for strongly elliptic systems of arbitrary order.}
Math USSR Sb. \textbf{24}(1) (1974), 103–-126.


\bibitem{chernoff}
Chernoff, P.: \emph{Essential self-adjoitness of powers of generators of
hyperbolic equations.} J. Funct. Anal. \textbf{12} (1973), 401--414.


\bibitem{cordes}
Cordes, H. O.: \emph{Self-adjoitness of powers of elliptic operators on
non-compact manifolds.} Math. Ann. \textbf{195} (1972), 257--272.



\bibitem{cycon}
Cycon, H. L., Froese, R. G., Kirsch, W. and Simon, B.:
\emph{Schr{\"o}dinger Operators with Applications to Quantum
Mechanics and Global Geometry.} Texts and Monographs in Physics, Springer-Verlag, Berlin, 1987.




\bibitem{gaffney}
Gaffney, M.: \emph{A special Stokes's theorem for complete Riemannian
manifolds.} Ann. of Math. \textbf{60} (1954), 140--145.


\bibitem{grigoryan}
Grigor'yan. A.: \emph{Heat Kernel and Analysis on Manifolds.}
AMS/IP Studies in Advanced Mathematics, Vol. 47, American Mathematical
Society, Providence, RI; International Press, Boston, MA, 2009.

\bibitem{grummt}
Grummt, R. and Kolb, M.: \emph{Essential selfadjointness of singular
magnetic Schr{\"o}dinger operators on Riemannian manifolds.}
J. Math. Anal. Appl. \textbf{388} (2012), 480--489.


\bibitem{guneysu-17}
G\"uneysu, B.: \emph{Covariant Schr\"odinger Semigroups on Riemannian Manifolds.}
Operator Theory: Advances and Applications, vol.~264, Birkh\"auser, Basel,  2017.

\bibitem{guneysu-17-paper}
G\"uneysu, B.: \emph{Heat kernels in the context of Kato potentials on arbitrary manifolds.}
Potential Anal. \textbf{46} (2017), 119--134.

\bibitem{GP}
G\"uneysu, B. and Post, O.: \emph{Path integrals and the essential self-adjointness of differential operators on noncompact manifolds.}
Math. Z. \textbf{275} (2013), 331–-348.


\bibitem{huang}
Huang, S.: \emph{A note on existence of exhaustion functions and its applications.}
J. Geom. Anal. \textbf{29} (2019), 1649--1659.

\bibitem{irv-19}
Impera, D., Rimoldi, M. and Veronelli, G.: \emph{Density problems for second order Sobolev spaces and cut-off functions on manifolds with unbounded geometry.} Int. Math. Res. Not. IMRN doi:10.1093/imrn/rnz131.


\bibitem{kato}
Kato, T.: \emph{Perturbation Theory for Linear Operators.}
Springer-Verlag, New York, 1980.

\bibitem{lm}
Lesch, M: \emph{Essential self-adjointness of symmetric linear relations associated to
first order systems.} Journ\'ees \'Equations aux D\'eriv\'ees Partielles (La Chapelle sur
Erdre) Univ. Nantes, Exp. No. X (2000), 18pp.

\bibitem{milatovic}
Milatovic, O.: \emph{Self-adjointness of perturbed biharmonic operators on Riemannian manifolds.}
Math. Nachr. \textbf{290} (2017), 2948–-2960.

\bibitem{hdn-12}
Nguyen, X.~D.: \emph{Essential selfadjointness and selfadjointness for even order elliptic operators.}
Proc. Roy. Soc. Edinburgh Sect. A \textbf{93} (1982/83), 161--179.



\bibitem{oleinik}
Oleinik, I.: \emph{On the essential self-adjointness of the Schr\"odinger operator on
complete Riemannian manifolds.} Math. Notes \textbf{54} (1993), 934--939.

\bibitem{Oleinik94}
Oleinik, I.: \emph{On a connection between classical and quantum-mechanical completeness of the potential at
infinity on a complete Riemannian manifold.} Math. Notes \textbf{55} (1994), 380--386.

\bibitem{reed}
Reed, M. and Simon, B.: \emph{Methods of Modern Mathematical Physics II: Fourier Analysis, Self-Adjointness.}
Academic Press, New York, 1975.

\bibitem{rv-19} Rimoldi, M. and Veronelli, G.: \emph{Extremals of log Sobolev inequality on non-compact manifolds and Ricci soliton structures.}
Calc. Var. Partial Differential Equations \textbf{58} (2019), no. 2, Art. 66, 26 pp.


\bibitem{rb-70} Rofe-Beketov, F. S.: \emph{Conditions for the selfadjointness of the Schr\"odinger operator.}
Math. Notes \textbf{8} (1970), 888–-894.

\bibitem{s-biharmonic-book} Selvadurai, A.~P.~S.:
\emph{Partial Differential Equations in Mechanics 2: The Biharmonic Equation, Poisson's Equation.} Springer-Verlag, Berlin, 2000.

\bibitem{shi}
Shi, W.-X.: \emph{Ricci Deformation of the metric on complete noncompact K\"ahler manifolds.} PhD Thesis, Harvard University, 1989.

\bibitem{shubin_1}
Shubin, M.: \emph{Classical and quantum completeness for Schr\"odinger operators on
non-compact manifolds.} In:  Geometric aspects of partial differential equations (Roskilde,
1998). Contemp.~Math. vol.~242, Amer. Math. Soc., Providence (1999), pp.~257--269.

\bibitem{shubin_2}
Shubin, M.: \emph{Essential self-adjointness for magnetic Schr\"odinger operators on
non-compact manifolds.} In:  S\'eminaire \'Equations aux D\'eriv\'ees Partielles (Polytechnique) (1998-1999),
Exp. No.~XV, Palaiseau (1999), pp.~XV-1--XV-22

\bibitem{Simon-18} Simon, B.:
\emph{Tosio Kato's work on non-relativistic quantum mechanics: part 1.} Bull. Math. Sci. \textbf{8} (2018), no. 1, 121–-232.


\end{thebibliography}
\end{document}